\title{Moments of Askey-Wilson polynomials}
\author{Jang Soo Kim}
\address{School of Mathematics, University of Minnesota, Minneapolis, MN 55455}
\email{kimjs@math.umn.edu}
\author{Dennis Stanton}
\address{School of Mathematics, University of Minnesota, Minneapolis, MN 55455}
\email{stanton@math.umn.edu}
\date{\today}
\subjclass[2000]{Primary: 05A30; Secondary: 05E35, 05A15}
\keywords{moments of orthogonal polynomials, Askey-Wilson polynomials, Motzkin
  paths, matchings, staircase tableaux}
\newtheorem{thm}{Theorem}[section]
\newtheorem{lem}[thm]{Lemma}
\newtheorem{prop}[thm]{Proposition}
\newtheorem{cor}[thm]{Corollary}
\theoremstyle{definition}
\newtheorem{conj}{Conjecture}
\newtheorem{problem}{Problem}
\theoremstyle{remark}
\newcommand\nth{n^{\mathrm{th}}}
\newcommand\flr[1]{\left\lfloor #1\right\rfloor}
\newcommand\qint[1]{\left[ #1\right]_q}
\newcommand\Qbinom[3]{\genfrac{[}{]}{0pt}{}{#1}{#2}_{#3}}
\newcommand\qbinom[2]{\Qbinom{#1}{#2}{q}}
\newcommand\A{\mathcal{A}}
\newcommand\CM{\mathcal{CM}}
\newcommand\T{\mathcal{T}}
\newcommand\CT{\mathcal{CT}}
\newcommand\lm{\lambda/\mu}
\newcommand\RH{\widetilde{H}}
\newcommand\ta{\widetilde a}
\newcommand\tb{\widetilde b}
\newcommand\tc{\widetilde c}
\newcommand\td{\widetilde d}
\newcommand\tf{\widetilde f}
\newcommand\tv{\widetilde v}
\newcommand\abcd[1]{\alpha#1\beta#1\gamma#1\delta}
\newcommand\abcdpower{a^\alpha b^\beta c^\gamma d^\delta}
\newcommand\op{\overline P}
\newcommand\bg{{\beta+\gamma}}
\newcommand\RIGHT{\operatorname{right}}
\newcommand\BELOW{\operatorname{below}}
\newcommand\Cat{\operatorname{Cat}}
\newcommand\nara{\operatorname{Nara}}
\newcommand\inv{\operatorname{inv}}
\newcommand\wt{\operatorname{wt}}
\newcommand\blk{\operatorname{b}}
\newcommand\Mot{\operatorname{Mot}}
\newcommand\DSS{\operatorname{DSS}}
\newcommand\norm[1]{\lVert #1\rVert}
\newcommand\matching{\mathcal{M}}
\newcommand\cro{\operatorname{cr}}
\def\JV.{Josuat-Verg\`es}
\def\ywt.{$(y,q)$-weight}
\newcommand\ews{{}_8W_7}
\newcommand\twophione[3]{
  {}_2\phi_1 \left[ \left.
    \begin{array}{c}
      #1\\
      #2\\
    \end{array}
    \right| #3
    \right]
}
\def\cell(#1,#2)[#3]{
\ax=#2 \ay=#1
\multiply\ay by-1
\bx=\ax \by=\ay
\cx=\ax \cy=\ay
\dx=\ax \dy=\ay
\advance\bx by-1
\advance\dy by1
\advance\cx by-1
\advance\cy by1
\psline (\dx,\dy)(\ax,\ay)(\bx,\by)
\rput(\number\cx.5,
\ifnum\cy=0 -0.5\else\number\cy.5\fi){#3}
}
\def\gcell(#1,#2)[#3]{
\cell(#1,#2)[#3]
\psframe[linestyle=none,fillstyle=solid,fillcolor=gray!40!white](\ax,\ay)(\cx,\cy)
}
\def\scell(#1,#2)[#3]{
\cell(#1,#2)[#3]
\rput(\bx,\ay){\psline[linewidth=1.5pt](0,.5)(1,.5) \psline[linewidth=1.5pt](.5,0)(.5,1)}
}
\def\psrow(#1,#2){\multido{\i=1+1}{#2}{\cell(#1,\i)[]}}
\def\psdk#1{
\psline(#1,0)(0,0)(0,-#1)
\multido{\n=1+1}{#1}{
  \ax=#1 \advance\ax by 1
  \advance\ax by -\n
  \psrow(\n,\ax)}
}
\def\UPA{\rput(0,-.3){\psline[linewidth=1.5pt,arrowsize=.5, arrowlength=.6]{->}(0, .6)}}
\def\LTA{\rput(.3,0){\psline[linewidth=1.5pt,arrowsize=.5, arrowlength=.6]{->}(-.6, 0)}}
\def\HS(#1,#2){\rput(#1,#2){\psline[linewidth=1pt](0,0)(1,0)}}
\def\HSC(#1,#2){\rput(#1,#2){   
\pscurve [linewidth=1pt] (0,0)(.2,.1)(.4,-.1)(.6,.1)(.8,-.1)(1,0)
}}
\def\UP(#1,#2){\rput(#1,#2){\psline[linewidth=1pt](0,0)(1,1) 
}}
\def\DW(#1,#2){\rput(#1,#2){\psline[linewidth=1pt](0,0)(1,-1) 
}}
\def\MUP(#1,#2){\rput(#1,#2){\psline[linecolor=red, linewidth=2pt](0,0)(1,1)
}}
\def\MDW(#1,#2){\rput(#1,#2){\psline[linecolor=red, linewidth=2pt](0,0)(1,-1)
}}
\def\MHS(#1,#2){\rput(#1,#2){\psline[linecolor=red, linewidth=2pt](0,0)(1,0)}}
\def\MHSC(#1,#2){\rput(#1,#2){   
\pscurve [linecolor=red, linewidth=2pt] (0,0)(.2,.1)(.4,-.1)(.6,.1)(.8,-.1)(1,0)
}}
\def\wcirc(#1,#2){\rput(#2,-#1){\pscircle[fillstyle=solid](-.5,.5){.2}}}
\def\bcirc(#1,#2){\rput(#2,-#1){\pscircle*(-.5,.5){.2}}}
\def\dcirc(#1,#2){\rput(#2,-#1){\pscircle[linecolor=red](-.5,.5){.4}}}
\begin{document}

\begin{abstract}
 New formulas for the $n^{\mathrm{th}}$ moment $\mu_n(a,b,c,d;q)$ of the Askey-Wilson
polynomials are given. These are derived using analytic techniques, and by considering 
three combinatorial models for the moments: Motzkin paths, matchings, and 
staircase tableaux. A related positivity theorem is given and another one is conjectured.
\end{abstract}

\maketitle

\section{Introduction}
\label{sec:introduction}

The monic Askey-Wilson polynomials $P_n=P_n(x;a,b,c,d;q)$ are polynomials in 
$x$ of degree $n$ which depend upon five parameters $a$, $b$, $c$, $d$, and $q$. 
They may be defined by the
three-term recurrence $P_{n+1} = (x-b_n) P_n - \lambda_{n} P_{n-1}$ with
$P_{-1}=0$ and $P_0=1$ for $b_n =\frac{1}{2}(a+a^{-1}-(A_n+C_n))$ and
$\lambda_n=\frac{1}{4}A_{n-1}C_n$, where
\begin{align*}
A_n &= \frac{(1-abq^n)(1-acq^n)(1-adq^n)(1-abcdq^{n-1})}
{a(1-abcdq^{2n-1})(1-abcdq^{2n})},\\
C_n &= \frac{a(1-q^n)(1-bcq^{n-1})(1-bdq^{n-1})(1-cdq^{n-1})}
{(1-abcdq^{2n-2})(1-abcdq^{2n-1})}.
\end{align*}
We refer to \cite{GR} for the standard basic hypergeometric notation and 
for information about the Askey-Wilson polynomials.

In view of the above three-term recurrence relation, the Askey-Wilson
polynomials are orthogonal polynomials. An explicit absolutely continuous
measure \cite[Theorem 2.2]{AskeyWilson}, may be given for these polynomials. (We
assume here that $\max\{|a|,|b|,|c|,|d|\}<1.$) It is supported on $x\in [-1,1]$,
and with $x=\cos\theta,$ the measure is
\begin{equation}
  \label{eq:w}
  w(\cos\theta,a,b,c,d;q) = 
\frac{(e^{2i\theta})_\infty (e^{-2i\theta})_\infty}
{(ae^{i\theta})_\infty (ae^{-i\theta})_\infty (be^{i\theta})_\infty(be^{-i\theta})_\infty
(ce^{i\theta})_\infty (ce^{-i\theta})_\infty (de^{i\theta})_\infty (de^{-i\theta})_\infty}.
\nonumber
\end{equation}

The measure has total mass given by the {\it{Askey-Wilson integral}},
\begin{equation}
  \label{eq:AW_integral}
I_0(a,b,c,d) = \frac{(q)_\infty}{2\pi} \int_{0}^\pi w(\cos\theta,a,b,c,d;q) d\theta
=\frac{(abcd)_\infty}{(ab)_\infty (ac)_\infty (ad)_\infty (bc)_\infty (bd)_\infty (cd)_\infty}.
\end{equation}

The purpose of this paper is to study the $\nth$ moment $\mu_n(a,b,c,d;q)$ of the 
measure $w(x;a,b,c,d;q)$ for the Askey-Wilson polynomials
\begin{equation}
\mu_n(a,b,c,d;q) = C\int_{-1}^1 x^n w(x;a,b,c,d;q) \frac{dx}{\sqrt{1-x^2}}.
\nonumber
\end{equation}
for some normalization constant $C$. With the normalization of $\mu_0(a,b,c,d;q)=1$, 
(the explicit $C$ may be found from \eqref{eq:AW_integral}), 
the $\nth$ moment is known to be a rational function of $a$, $b$, $c$, $d$ and $q$. 
We shall give new explicit expressions  for $\mu_n(a,b,c,d;q)$ 
and study three combinatorial models for $\mu_n(a,b,c,d;q).$ 
One unusual feature of these results is the mixture of binomial and 
$q$-binomial terms in the explicit formulas. We shall see why this 
occurs, both analytically and combinatorially.

The simplest expression for $\mu_n(a,b,c,d;q)$ is a double sum 
(see \cite[Theorem~1.12]{CSSW})
\begin{equation}\label{AWWmomformula}
\mu_n(a,b,c,d;q) = \frac{1}{2^n} \sum_{m=0}^n \frac{(ab,ac,ad;q)_m}{(abcd;q)_m} q^m
\sum_{j=0}^m 
\frac{q^{-j^2}a^{-2j} (aq^j+q^{-j}/a)^n}{(q,q^{1-2j}/a^2;q)_j (q,q^{2j+1}a^2;q)_{m-j}}.
\end{equation}
However this expression is not obviously symmetric in $a$, $b$, $c$, and $d$, 
even though the Askey-Wilson polynomials $P_n(x;a,b,c,d;q)$ and the moments $\mu_n(a,b,c,d;q)$ are 
symmetric. Nor does it exhibit the correct poles of  $\mu_n(a,b,c,d;q)$ as a rational function. 
For $d=0$ the moments are polynomials in $a$, $b$, $c$, and $q$. 
We give new expressions for the 
moments $\mu_n(a,b,c,d;q)$, which are symmetric and polynomial when $d=0$ ,
see Theorem~\ref{thm:AWWd=0}. We also give a symmetric version for all $a,b,c,d$
in Theorem~\ref{thm:8W7}, although the polynomial dependence in $q$ is not clear.
We give new expressions for the moments $\mu_n(a,b,c,d;q)$ in the 
special case $b=-a$, $d=-c$, see Theorem~\ref{thm:AWWc=-a,d=-c} and 
Theorem~\ref{thm:AWWsymm}. We prove a new positivity 
theorem in Corollary~\ref{cor:flipcor}, and conjecture another one in 
Conjecture~\ref{conj:symmcon}.

Our second goal is to combinatorially study the moments $\mu_n(a,b,c,d;q)$ as
functions of $a$, $b$, $c$, and $d$.  We use three combinatorial models for this purpose. 
The moments for any set of orthogonal polynomials may be given as
weighted Motzkin paths \cite{V}. In this case $\mu_n(a,b,c,d;q)$ is
the generating function for Motzkin paths with weights which are rational
functions of $a,b,c,d$ and $q.$ We use this setup and a generalization of an
idea of D. Kim \cite{Kim1997} to combinatorially prove Corollary
~\ref{cor:AWWc=d=0} and Corollary~\ref{cor:bb=q/a} in
Section~\ref{sec:combinatorial-proof-}. A special combinatorial model for 
the Askey-Wilson integral, which evaluated the normalization constant $C,$ 
was given in \cite{ISV}. We modify this model appropriately to give a combinatorial 
model for some non-normalized moments in Theorem~\ref{thm:match}. 
We also give new explicit rational expressions for $\mu_n(a,b,c,d;q)$ so that 
$(abcd)_n\mu_n(a,b,c,d;q)$ are clearly polynomials in $a$, $b$, $c$, $d$ and $q$, 
see Theorem~\ref{thm:main} and Theorem~\ref{thm:main2}.

A third combinatorial model is given by Corteel and Williams \cite{Corteel2011}, who
give a combinatorial interpretation for the polynomial 
$2^n(abcd)_n\mu_n(a,b,c,d;q)$ using a rational transformation over the 
complex numbers of the parameters $a$, $b$,
$c$, and $d$ to parameters $\alpha$, $\beta$, $\gamma$, and $\delta$.  
With this transformation, $2^n(abcd)_n\mu_n(a,b,c,d;q)$ is a polynomial 
in $\alpha$, $\beta$, $\gamma$, and $\delta$ 
with positive integer coefficients, which has a combinatorial meaning. 
(See Section~\ref{sec:staircase-tableaux}). 
Using their ideas, explicit coefficients of certain terms in 
$2^n(abcd)_n\mu_n(a,b,c,d;q)$ as Catalan numbers are given in
Theorems~\ref{thm:highest} and \ref{thm:23}.  

We conclude in Section~\ref{sec:conn-with-other} with a summary of the special
cases of the Askey-Wilson polynomials whose moments had been considered
combinatorially. 

We give the first combinatorial proof of the formula of
Corteel et al. for the moments of $q$-Laguerre polynomials that is essentially
equivalent Corollary~\ref{cor:bb=q/a}, see Section~\ref{sec:conn-with-other}.
\JV. \cite{JV_PASEP} gave a different combinatorial proof of
Corollary~\ref{cor:AWWc=d=0}, but our proof is the first combinatorial proof of
Corollary~\ref{cor:bb=q/a}.

\section{Askey-Wilson moments}
\label{sec:askey-wilson-moments}

In this section we consider the moments $\mu_n(a,b,c,d;q)$ as functions of the 
parameters $a$, $b$, $c$, $d$ and $q$.  Our goal is to give new explicit formulas 
for these moments, using simple series and integral evaluations. 
We shall prove these specific results for the moments: 

\begin{itemize}
\item Theorem~\ref{thm:AWWmom} for the general case,
\item Theorem~\ref{thm:AWWd=0} and its Corollary~\ref{cor:AWWc=d=0} 
for a symmetric polynomial version when $d=0$, 
\item Theorem~\ref{thm:8W7}, a general symmetric polynomial version,
\item Corollary~\ref{cor:bb=q/a} and Corollary~\ref{cor:bb=-a}, as special cases of 
Corollary~\ref{cor:AWWc=d=0},
\item Theorem~\ref{thm:AWWsymm} and Theorem~\ref{thm:AWWc=-a,d=-c} for alternative formulas in the symmetric case, and
\item the positivity results Proposition~\ref{prop:partmatch} and 
Corollary~\ref{cor:bb=q/a}.
\end{itemize}

We use \eqref{AWWmomformula} and 
the method of proof of \eqref{AWWmomformula} which appears in \cite{CSSW} to prove 
these results. Positivity Conjecture~\ref{conj:symmcon} is also given here.

First we note the polynomial behavior of the moments. An explicit combinatorial 
version of this proposition, although with complex weights, is given 
in Proposition~\ref{prop:imagmom}.

\begin{prop}
\label{prop:polymom}
$2^n (abcd;q)_n \mu_n(a,b,c,d;q)$ is a polynomial in $a,b,c,d,q$ with integer coefficients.   
\end{prop}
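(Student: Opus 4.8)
The plan is to work directly from the three-term recurrence $P_{n+1} = (x-b_n)P_n - \lambda_n P_{n-1}$ together with Viennot's combinatorial theory of moments as weighted Motzkin paths. By that theory, $\mu_n(a,b,c,d;q)$ is the sum over Motzkin paths of length $n$ of the product of edge weights, where each level step at height $k$ contributes $b_k$ and each down step from height $k$ to $k-1$ contributes $\lambda_k$ (up steps contribute $1$). So I would first read off from the formulas for $A_n, C_n, b_n, \lambda_n$ that, while $b_k$ and $\lambda_k$ individually have denominators involving $abcd$ to various powers, these denominators are highly structured. The key point will be to track, along any Motzkin path, exactly which factors of the form $(1-abcdq^{j})$ appear in denominators and to show they all cancel against numerator factors after multiplying by $2^n(abcd;q)_n$.

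The cleaner route is to renormalize the recurrence. First I would clear the $2^n$ by the standard substitution $x \mapsto x/2$: setting $\widetilde{P}_n(x) = 2^n P_n(x/2)$ gives a monic recurrence $\widetilde{P}_{n+1} = (x - 2b_n)\widetilde{P}_n - 4\lambda_n \widetilde{P}_{n-1}$, and $2^n\mu_n$ is the $\nth$ moment of this rescaled family. Now $2b_n = a + a^{-1} - (A_n + C_n)$ and $4\lambda_n = A_{n-1}C_n$. The second step is to handle the $(abcd;q)_n$ factor. Here I would use the telescoping structure of the denominators: $A_n$ and $C_n$ have denominators that are products of two consecutive factors $(1-abcdq^{m})$, and the product $A_{n-1}C_n$ has denominator $(1-abcdq^{2n-2})(1-abcdq^{2n-1})^2(1-abcdq^{2n})$ before simplification — but one checks that $(1-abcdq^{2n-1})$ divides the numerator (the factor $(1-abcdq^{n-1})$ in $A_{n-1}$ paired appropriately), so in fact $4\lambda_n$ has denominator exactly $(1-abcdq^{2n-2})(1-abcdq^{2n})$ times a monomial, while $2b_n$ has denominator $(1-abcdq^{2n-1})(1-abcdq^{2n})$ times a monomial.

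The heart of the argument is then a bookkeeping lemma: along a Motzkin path visiting heights with multiplicities, the total collection of denominator factors $(1-abcdq^{j})$ that can appear uses each index $j$ at most... — more precisely, I would argue that in any monomial of the path expansion, the set of $j$'s occurring in the denominator is contained in $\{0,1,\dots,n-1\}$ with each appearing at most once after the obvious cancellations along the path (an up step at height $k-1$ followed later by the matching down step at height $k$ produces an $A_{k-1}$-type and $C_k$-type pair whose shared factors telescope). Hence multiplying by $(abcd;q)_n = \prod_{j=0}^{n-1}(1-abcdq^j)$ clears all denominators of $q$-Pochhammer type. The remaining denominators are pure monomials in $a,b,c,d$ (powers of $a$ from the $a^{-1}$ in $b_n$, and the lone $a$ in the numerator of $C_n$), and these must cancel as well because $\mu_n$ is symmetric in $a,b,c,d$ — a symmetric rational function whose only possible poles are at $abcdq^j = 1$ cannot have a monomial $a^{-k}$ in lowest terms. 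Integer coefficients then follow since every ingredient ($A_n$, $C_n$ after clearing, binomial-type cancellations) has integer coefficients.

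The main obstacle I anticipate is making the telescoping/cancellation bookkeeping along an arbitrary Motzkin path fully rigorous rather than merely plausible: one must show that the "doubled" factor $(1-abcdq^{2n-1})^2$ coming from $A_{n-1}C_n$ and, globally, the interaction of $b_k$ weights at the same height used several times, never force a factor $(1-abcdq^j)$ to appear to a power exceeding its multiplicity in $(abcd;q)_n$. An alternative, and perhaps safer, route that sidesteps the path combinatorics entirely is to instead start from the explicit double-sum \eqref{AWWmomformula}: multiply through by $2^n(abcd;q)_m$-compatible factors and show term-by-term that $q^{-j^2}a^{-2j}(aq^j+q^{-j}/a)^n$ expands, after combining with $1/(q,q^{1-2j}/a^2;q)_j(q,q^{2j+1}a^2;q)_{m-j}$ and the prefactor $(ab,ac,ad;q)_m/(abcd;q)_m$, into a Laurent polynomial, and then invoke symmetry to kill the negative powers; but this requires care with the $q^{-j^2}$ and $a^{-2j}$ and is computationally heavier, so I would present the recurrence-based argument as the main proof and mention the symmetry observation as the clean finish for the monomial denominators.
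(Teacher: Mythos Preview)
Your main approach has a genuine gap: the path-by-path denominator bookkeeping cannot work as stated. Consider the Motzkin path of length $n$ that stays at height $0$ throughout. Its weight is $b_0^n$, and since $2b_0 = a + a^{-1} - (1-ab)(1-ac)(1-ad)/\bigl(a(1-abcd)\bigr)$ (using $C_0=0$), this single term contributes a factor $(1-abcd)^n$ in the denominator. Multiplying by $(abcd;q)_n$ supplies only one copy of $(1-abcd)$, leaving $(1-abcd)^{n-1}$ uncleared. The same phenomenon occurs whenever a path has several horizontal steps at the same height. The cancellation of these higher powers happens only after summing over \emph{all} Motzkin paths, so your claim ``each $j$ appears at most once after the obvious cancellations along the path'' is false for individual paths, and you have not proposed a mechanism for the global cancellation.

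Your alternative route (expand \eqref{AWWmomformula} and invoke symmetry) is essentially the right idea for polynomiality in $a,b,c,d$ --- indeed, this is exactly what the paper does, in one line: from \eqref{AWWmomformula} the expression is visibly polynomial in $b,c,d$ (with coefficients rational in $a,q$), and symmetry then forces polynomiality in $a$. What you are missing is the argument for polynomiality in $q$: the terms $q^{-j^2}$ and $1/(q;q)_j$ in \eqref{AWWmomformula} leave possible $q$-poles at $q=0$ and at roots of unity, and no amount of symmetry in $a,b,c,d$ rules those out. The paper closes this gap by bringing the recurrence back in, but in a much softer way than your telescoping: since $\mu_n$ is a polynomial (with integer coefficients) in the $b_k$'s and $\lambda_k$'s, and since these have no poles on $|q|=1$ or at $q=0$ (their only poles are at $abcdq^j=1$), neither does $2^n(abcd;q)_n\mu_n$. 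Combining the two viewpoints --- the explicit formula to constrain the $q$-poles, the recurrence to exclude them --- finishes the proof. Your proposal uses each ingredient separately but never puts them together in this way.
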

\begin{proof}
  By \eqref{AWWmomformula}, $2^n (abcd;q)_n \mu_n(a,b,c,d;q)$ is a polynomial in
  $b$, $c$, and $d$ with possibly rational function coefficients in $a$ and $q$.
  By symmetry in $a$, $b$, $c$, and $d$, it is also a polynomial in $a$, thus a
  polynomial in $a$, $b$, $c$ and $d$, with coefficients which are rational
  function of $q$.  The poles of these rational functions must lie on either
  $|q|=1$ or $q=0$. On the other hand $\mu_n(a,b,c,d;q)$ is a polynomial in the
  three-term recurrence relation coefficients $b_n$ and $\lambda_n$ of
  Section~\ref{sec:introduction}, with integral coefficients. Since $b_n$ and
  $\lambda_n$ have no poles at these locations, $2^n (abcd;q)_n
  \mu_n(a,b,c,d;q)$ is also a polynomial in $q$, with integral coefficients.
\end{proof}

Using \eqref{AWWmomformula} we prove the first result, which 
exhibits  $2^n (abcd;q)_n\mu_n(a,b,c,d;q)$ as a symmetric polynomial in 
$b$, $c$, and $d.$

\begin{thm} \label{thm:AWWmom}
The Askey-Wilson moments are
\begin{multline*}
2^n\mu_n(a,b,c,d;q) =\sum_{m=0}^n 
\frac{(ab,ac,ad;q)_m}{(abcd;q)_m} (-q)^m
\sum_{s=0}^{n+1}  \left(\binom{n}{s} - \binom{n}{s-1}\right)\\
\times \sum_{p=0}^{n-2s-m} a^{-n+2s+2p}
\qbinom{m+p}{m} \qbinom{n-2s-p}{m} q^{(-n+2s+p)m+\binom{m}{2}}.
\end{multline*}
\end{thm}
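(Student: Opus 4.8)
The plan is to start from the double-sum formula \eqref{AWWmomformula} for $\mu_n(a,b,c,d;q)$ and manipulate the inner sum over $j$. The term $(aq^j+q^{-j}/a)^n$ should be expanded by the binomial theorem, writing
\[
(aq^j+q^{-j}/a)^n = \sum_{k=0}^n \binom{n}{k} a^{n-2k} q^{(n-2k)j}.
\]
Substituting this into \eqref{AWWmomformula} and exchanging the order of summation, I would be left for each fixed $m$ and each fixed power $a^{n-2k}$ with a sum over $j$ of the shape
\[
\sum_{j=0}^m \frac{q^{-j^2+(n-2k-2)j}}{(q,q^{1-2j}/a^2;q)_j\,(q,q^{2j+1}a^2;q)_{m-j}},
\]
which is (up to elementary prefactors in $q$ and $a$) the same kind of sum that appears in the proof of \eqref{AWWmomformula} in \cite{CSSW}. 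The first task is therefore to recall or reconstruct that evaluation: such a sum telescopes or, more precisely, is a terminating very-well-poised series that collapses to a single $q$-binomial-type product. This is exactly the step where the ``mixture of binomial and $q$-binomial terms'' is born — the binomial theorem on $(aq^j+1/(aq^j))^n$ supplies the ordinary $\binom{n}{k}$, while the $j$-sum supplies the $q$-binomials $\qbinom{m+p}{m}$ and $\qbinom{n-2s-p}{m}$.

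Next I would introduce the substitution $s$ that produces the factor $\binom{n}{s}-\binom{n}{s-1}$. Here the key is the classical identity that $x^n = \sum_{s}\bigl(\binom{n}{s}-\binom{n}{s-1}\bigr)\,\frac{x^{n-2s+1}-x^{-(n-2s+1)}}{x-x^{-1}}$ type expansion, i.e. rewriting a monomial in $x=\cos\theta$ in a Chebyshev-like basis; equivalently, in the variable $u=aq^j$ one writes $(u+u^{-1})^n$ as a combination of $u^{n-2s+1}-u^{-(n-2s+1)}$ with coefficients $\binom{n}{s}-\binom{n}{s-1}$ (the Catalan triangle / ballot numbers). Applying this instead of the naive binomial theorem makes the subsequent $j$-sum symmetric under $j\mapsto m-j$ (or under $u\mapsto u^{-1}$), which is what allows the $j$-sum to be evaluated in closed form as a balanced product of two $q$-binomials with the quadratic $q$-exponent $q^{(-n+2s+p)m+\binom m2}$, after renaming the remaining summation index as $p$. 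I would carry out this change of index carefully, tracking how the ranges $0\le j\le m$ and the Chebyshev degree $n-2s+1$ combine to give the stated range $0\le p\le n-2s-m$ and the factor $(-q)^m$ out front.

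**The main obstacle** I anticipate is the exact closed-form evaluation of the $j$-sum after the Chebyshev expansion: showing that
\[
\sum_{j} \frac{q^{-j^2}\,a^{-2j}\,(aq^j)^{\,n-2s+1-(\text{or }-) \cdots}}{(q,q^{1-2j}/a^2;q)_j\,(q,q^{2j+1}a^2;q)_{m-j}}
\]
equals, up to the explicit prefactor, $\sum_p a^{-n+2s+2p}\qbinom{m+p}{m}\qbinom{n-2s-p}{m}q^{(-n+2s+p)m+\binom m2}$. I expect this to follow either from a known terminating $q$-series summation (a limiting or specialized case of the $q$-Vandermonde / $q$-Saalsch\"utz sum, or the very-well-poised ${}_6\phi_5$), or by a direct partial-fractions / residue argument in the variable $a^2$ on the denominator factors $(q^{1-2j}/a^2;q)_j(q^{2j+1}a^2;q)_{m-j}$ — noting that for fixed $m$ these denominators have simple poles at roots of unity times powers of $q$, and the residues reassemble into the $q$-binomial product. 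Once that lemma is in hand, the rest is bookkeeping: substitute back, collect the $(ab,ac,ad;q)_m/(abcd;q)_m$ factor unchanged from \eqref{AWWmomformula}, and verify the sign and $q$-power normalizations. As a sanity check I would test $n=0,1,2$ and the $m=n$ top term against known values, and confirm the $d=0$ specialization is consistent with the polynomial statement of Proposition~\ref{prop:polymom} and with Theorem~\ref{thm:AWWd=0}.
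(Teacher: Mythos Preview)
Your overall strategy---start from \eqref{AWWmomformula}, expand the $j$-summand, and evaluate the $j$-sum to produce the mixture of binomial and $q$-binomial coefficients---matches the paper exactly. But you are making the $j$-sum harder than it is, and you have not identified the two concrete steps that make it collapse.

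The first step you are missing is a rewriting of the \emph{denominator}, not a clever expansion of the numerator. The paper observes that
\[
\frac{q^{-j^2}a^{-2j}}{(q,q^{1-2j}/a^2;q)_j\,(q,q^{2j+1}a^2;q)_{m-j}}
= (-1)^j q^{\binom{j}{2}}\qbinom{m}{j}\,
\frac{1-a^2q^{2j}}{(q;q)_m\,(a^2q^j;q)_{m+1}}.
\]
This is the key move: it pulls out the factor $(1-a^2q^{2j})$, which then combines with the ordinary binomial expansion of $(aq^j+q^{-j}/a)^n=a^{-n}q^{-jn}(1+a^2q^{2j})^n$ to give exactly $\sum_s\bigl(\binom{n}{s}-\binom{n}{s-1}\bigr)(a^2q^{2j})^s$. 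So your Chebyshev intuition is correct, but the $(1-u^2)$ factor comes for free from the denominator, not from imposing a second-kind expansion by hand. Next, $1/(a^2q^j;q)_{m+1}$ is expanded as $\sum_{p\ge0}\qbinom{m+p}{m}(a^2q^j)^p$ by the $q$-binomial theorem. After these two expansions the $j$-sum is nothing more exotic than $\sum_j(-1)^jq^{\binom{j}{2}}\qbinom{m}{j}q^{jN}=(q^N;q)_m$---the finite $q$-binomial theorem. No very-well-poised series, no $q$-Saalsch\"utz, no partial fractions are needed.

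There is also a genuine second step you have not anticipated. After the $j$-sum one has
\[
\sum_{p\ge0}a^{-n+2s+2p}\qbinom{m+p}{m}(q^{-n+2s+p};q)_m,
\]
with $p$ ranging to infinity and the second factor not yet a $q$-binomial. The paper restricts $p$ to $0\le p\le n-2s-m$ by exploiting the reflection $s\mapsto n+1-s$ on the $\binom{n}{s}-\binom{n}{s-1}$ term together with a shift in $p$, and then checking that the two resulting pieces cancel outside the stated range. This cancellation argument is short but not automatic, and it is what turns $(q^{-n+2s+p};q)_m$ into $\qbinom{n-2s-p}{m}q^{(-n+2s+p)m+\binom{m}{2}}(q;q)_m$ on the surviving range. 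Your proposal treats the range $0\le p\le n-2s-m$ as if it will simply fall out of the $j$-sum evaluation, but it does not; it requires this separate symmetry step.
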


\begin{proof}[Proof of Theorem~\ref{thm:AWWmom}]
We shall show that \eqref{AWWmomformula} implies Theorem~\ref{thm:AWWmom}.

Apply the binomial and $q$-binomial theorems to find that
the $j$-sum of \eqref{AWWmomformula} equals
\begin{align*}
& \sum_{j=0}^m q^{-j^2}a^{-2j} \cdot (-1)^j q^{2j^2-j-\binom{j}{2}} \qbinom{m}{j} 
\frac{q^{-jn}a^{-n} (1+a^2q^{2j})^n(1-a^2q^{2j})}{(a^2q^j;q)_{m+1} (q;q)_m}\\
=& \sum_{j=0}^m q^{-jn}a^{-n} (-1)^j q^{\binom{j}{2}} \qbinom{m}{j} 
\sum_{s=0}^{n+1} \left(\binom{n}{s}-\binom{n}{s-1}\right) a^{2s} q^{js}
\sum_{p=0}^\infty  \qbinom{m+p}{m} \frac{a^{2p} q^{jp}}{(q;q)_m}.
\end{align*}
Thus the $j$-sum is summable by the $q$-binomial theorem to obtain
\begin{multline*}
2^n\mu_n(a,b,c,d;q) =\sum_{m=0}^n 
\frac{(ab,ac,ad;q)_m}{(q,abcd;q)_m} q^m
\sum_{s=0}^{n+1}  \left(\binom{n}{s} - \binom{n}{s-1}\right)\\
\times \sum_{p=0}^{\infty} a^{-n+2s+2p}
\qbinom{m+p}{m} (q^{-n+2s+p};q)_m.
\end{multline*}
We extend the $p$ and $s$ sums from $-\infty$ to $\infty$ by declaring these
extended binomial and $q$-binomial coefficients to be zero.  Replacing $s$ by
$n+1-s$, and then $p$ by $p-n-1+2s$ for the second term of the difference gives
\begin{multline*}
2^n\mu_n(a,b,c,d;q)
=\sum_{m=0}^n 
\frac{(ab,ac,ad;q)_m}{(q,abcd;q)_m} q^m
\sum_{s=0}^{n+1} \binom{n}{s}\\
\times \sum_{p=-\infty}^{\infty} a^{-n+2s+2p}
\left( \qbinom{m+p}{m} (q^{-n+2s+p};q)_m
- \qbinom{m+p-n-1+2s}{m} (q^{p+1};q)_m \right). 
\end{multline*}
The first term is zero unless $p\ge0$ and ($-n+2s+p\le -m$ or $-n+2s+p\ge 1$).
The second term is zero unless $p-n-1+2s\ge0$ and ($p+1\le -m$ or $p+1\ge 1$).
If $p\ge0$ and $-n+2s+p\ge 1$, then the first and the second terms are nonzero
and equal.  Thus the difference is zero unless $0\le p \le n-2s-m$ for the first
term and $0\le p-n-1+2s$ and $p+1\le -m$ for the second term.  Undoing the
change of summation variables for the second term then gives 
Theorem~\ref{thm:AWWmom}.
\end{proof}

When $d=0$, we can obtain an explicit formula for the moments, which is a polynomial in 
each parameter $a$, $b$, $c$, and $q$.

\begin{thm} \label{thm:AWWd=0}
The Askey-Wilson moments for $d=0$ are
\begin{multline*}
2^n\mu_n(a,b,c,0;q) =
\sum_{k=0}^{n} \left(\binom{n}{\frac{n-k}2}-\binom{n}{\frac{n-k}2-1}\right)\\
\times 
\sum_{u+v+w+2t=k} a^u b^v c^w
(-1)^{t} q^{\binom{t+1}2} 
\qbinom{u+v+t}{v}
\qbinom{v+w+t}{w}
\qbinom{w+u+t}{u},
\end{multline*}
where the second sum is over all integers $0\le u,v,w\le k$ and $-k\le t\le k/2$
satisfying $u+v+w+2t=k$. 
\end{thm}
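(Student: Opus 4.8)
The plan is to specialize Theorem~\ref{thm:AWWmom} to $d=0$ and then identify the resulting $a,b,c$-dependent sum as the symmetric expression in the statement. First I would set $d=0$ in Theorem~\ref{thm:AWWmom}: since $(ad;q)_m = (0;q)_m = 1$ and $(abcd;q)_m = 1$, the prefactor collapses to $(ab,ac;q)_m(-q)^m$, leaving
\[
2^n\mu_n(a,b,c,0;q) = \sum_{m,s,p} \left(\binom{n}{s}-\binom{n}{s-1}\right)(ab,ac;q)_m(-q)^m a^{-n+2s+2p}\qbinom{m+p}{m}\qbinom{n-2s-p}{m}q^{(-n+2s+p)m+\binom{m}{2}}.
\]
The outer index should be converted to $k=n-2s$, so that $\binom{n}{s}-\binom{n}{s-1}=\binom{n}{(n-k)/2}-\binom{n}{(n-k)/2-1}$; this matches the first factor in the target, with $k$ running over integers of the same parity as $n$. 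It remains to show that for each fixed $k$ the inner triple sum over $m$ and $p$ equals
\[
\sum_{u+v+w+2t=k} a^u b^v c^w (-1)^t q^{\binom{t+1}{2}}\qbinom{u+v+t}{v}\qbinom{v+w+t}{w}\qbinom{w+u+t}{u}.
\]

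The bridge between the two is the $q$-binomial theorem applied twice, to expand $(ab;q)_m$ and $(ac;q)_m$ into powers of $b$ and $c$. Writing $(ab;q)_m = \sum_{i} (-1)^i q^{\binom{i}{2}}\qbinom{m}{i}(ab)^i$ and similarly for $(ac;q)_m$, the $a$-power on the left becomes $a^{-n+2s+2p+i+j}$. Setting the total $a$-exponent equal to $u$ forces a linear relation among $p,i,j$ and $k$; setting $v=i$, $w=j$ identifies the $b$- and $c$-powers directly. After this substitution the $q$-power and the product of three $q$-binomial coefficients on the left should reorganize into exactly the three-fold $q$-binomial product on the right, with $t$ determined by $m$ and the other indices through $m+p$-type shifts. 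The key computational identity needed is a single-variable $q$-binomial summation (a ${}_2\phi_1$ evaluation, e.g. the $q$-Chu–Vandermonde identity) that collapses the remaining free summation index — likely $m$ or $p$ — after the substitutions.

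The main obstacle I anticipate is bookkeeping: tracking the four nested sums ($m,s,p$ plus the two from the $q$-binomial theorem) and choosing the right change of variables so that the constraint $u+v+w+2t=k$ emerges cleanly with the correct ranges $0\le u,v,w\le k$ and $-k\le t\le k/2$. In particular one must verify that the boundary conditions on $p$ in Theorem~\ref{thm:AWWmom} (namely $0\le p\le n-2s-m$) translate into precisely the stated ranges, and that no spurious terms survive. A secondary check is the symmetry: the left side after $d=0$ is manifestly symmetric in $b$ and $c$ but its symmetry in $a$ is inherited only from the symmetry of $\mu_n$, whereas the right side is visibly cyclically symmetric in $a,b,c$; confirming that the algebra respects this is a useful consistency test. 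I would carry out the two $q$-binomial expansions first, then perform the $p$-summation using the appropriate ${}_2\phi_1$ evaluation, then finally match $q$-powers and verify ranges.
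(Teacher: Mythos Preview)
Your plan is essentially the paper's own proof: specialize Theorem~\ref{thm:AWWmom} to $d=0$, expand $(ab;q)_m$ and $(ac;q)_m$ by the $q$-binomial theorem, collapse one index by the $q$-Vandermonde identity, and then relabel to $(k,u,v,w,t)$. The one correction is that the index eliminated by $q$-Vandermonde is $m$, not $p$: once the exponents of $a,b,c$ are fixed the index $p$ is already determined (your relation $u=-k+2p+v+w$ pins it down), whereas the remaining $m$-sum is precisely what $q$-Vandermonde evaluates.
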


\begin{proof}[Proof of Theorem~\ref{thm:AWWd=0}]
We show that Theorem~\ref{thm:AWWd=0} follows from Theorem~\ref{thm:AWWmom}.

 In Theorem \ref{thm:AWWmom} expand the terms $(ab;q)_m$ and
$(ac;q)_m$ as polynomial in $a$, $b$ and $c$ by the $q$-binomial theorem to obtain
\[
2^n\mu_n(a,b,c,0;q) =
\sum_{s=0}^{n+1}  \left(\binom{n}{s} - \binom{n}{s-1}\right)
\sum_{p=0}^{n-2s} a^{-n+2s+2p} \cdot X,
\]
where
\[
X = \sum_{u,v\ge0} (-1)^{u+v} q^{\binom u2+\binom v2}
\frac{a^{u+v} b^u c^v}{(q;q)_u (q;q)_v}
\sum_{m=0}^{n-2s-p} 
\frac{(q^{p+1}, q^{-n+2s+p};q)_m q^m}{(q;q)_{m-u} (q;q)_{m-v}}.
\]
The $m$-sum is summable by the $q$-Vandermonde identity \cite[II.6]{GR} and we get
\[
X = \sum_{u,v\ge0} (-1)^{n-2s-p-u-v} 
q^{\binom{n-2s-p-u-v-1}2} a^{u+v} b^u c^v 
\qbinom{n-2s-p}{v} \qbinom{p+u}{u} \qbinom{p+v}{n-2s-p-u}.
\]
By replacing $p$ and $s$ with new summation variables $t=n-2s-p-u-v$,
$w=-n+2s+2p+u+v$, and $k=n-2s$, we obtain Theorem~\ref{thm:AWWd=0}.
\end{proof}

The special case $c=0$ of Theorem \ref{thm:AWWd=0} is equivalent to a 
result of  \JV. \cite [Theorem~6.1.1]{JV_PASEP}. 

\begin{cor} \label{cor:AWWc=d=0}
The Askey-Wilson moments for $c=d=0$ are
\begin{equation}
  \label{eq:ASC1}
2^n\mu_n(a,b,0,0;q) = 
\sum_{k=0}^{n} \left( \binom{n}{\frac{n-k}2}-\binom{n}{\frac{n-k}2-1}\right)
\sum_{u+v+2t=k} a^u b^v (-1)^{t} q^{\binom{t+1}2} \qbinom{u+v+t}{u,v,t},
\nonumber
\end{equation}
where the second sum is over all nonnegative integers $u,v,t$ satisfying
$u+v+2t=k$.
\end{cor}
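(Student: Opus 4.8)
The plan is to obtain Corollary~\ref{cor:AWWc=d=0} as the specialization $c=0$ of Theorem~\ref{thm:AWWd=0}. Setting $c=0$ in the formula of Theorem~\ref{thm:AWWd=0}, every term in the inner sum with $w\ge1$ is killed by the factor $c^w=0$, while for $w=0$ one has $c^w=1$; hence only the $w=0$ summands survive. After deleting the variable $w$, the constraint $u+v+w+2t=k$ becomes $u+v+2t=k$, and the product of three $q$-binomial coefficients collapses because the middle one is $\qbinom{v+t}{0}=1$.

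It then remains to identify the surviving product $\qbinom{u+v+t}{v}\qbinom{u+t}{u}$ with the $q$-multinomial coefficient $\qbinom{u+v+t}{u,v,t}$. I would do this by writing both $q$-binomials as ratios of $q$-shifted factorials, so that the factor $(q;q)_{u+t}$ cancels and one is left with $(q;q)_{u+v+t}/\bigl((q;q)_u(q;q)_v(q;q)_t\bigr)$, which is the definition of $\qbinom{u+v+t}{u,v,t}$.

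Finally I would check that the index set matches the one in the statement. In Theorem~\ref{thm:AWWd=0} the variable $t$ runs over $-k\le t\le k/2$, but with $u,v\ge0$ and $u+v+2t=k$ the $q$-binomial coefficients in the $w=0$ term vanish whenever $t<0$ (for instance $\qbinom{u+t}{u}=0$ when $u>0$, and $\qbinom{v+t}{v}=0$ when $u=0$ and $t<0$), so only nonnegative $t$ contribute. Together with $u,v\ge0$ and $u+v+2t=k$ this is exactly the range over which the corollary sums. The only points that require a little care are this collapse of the summation range and the convention $0^0=1$; the rest is a direct substitution. Alternatively, one could specialize \eqref{AWWmomformula} at $c=d=0$ and repeat the computations in the proofs of Theorems~\ref{thm:AWWmom} and~\ref{thm:AWWd=0}, but the route through Theorem~\ref{thm:AWWd=0} is shorter.
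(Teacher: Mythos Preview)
Your proposal is correct and follows exactly the route the paper intends: the corollary is presented there simply as ``the special case $c=0$ of Theorem~\ref{thm:AWWd=0},'' and you have supplied the (straightforward) details of that specialization, including the identification $\qbinom{u+v+t}{v}\qbinom{u+t}{u}=\qbinom{u+v+t}{u,v,t}$ and the vanishing of the $t<0$ terms.
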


In the next section we give a combinatorial proof of
Corollary~\ref{cor:AWWc=d=0}.  Finding a combinatorial proof of
Theorem~\ref{thm:AWWd=0} is still open.

\begin{problem}
  Find a combinatorial proof of Theorem~\ref{thm:AWWd=0}.
\end{problem}

There are two interesting special cases of Corollary~\ref{cor:AWWc=d=0} which 
are the next two corollaries.

\begin{cor}\label{cor:bb=q/a}
We have
\[
2^n\mu_n(a,q/a,0,0;q) = 
\sum_{k=0}^{n} \left( \binom{n}{\frac{n-k}2}-\binom{n}{\frac{n-k}2-1}\right)
(q/a)^k \sum_{i=0}^k a^{2i} q^{i(k-i-1)}.
\]
\end{cor}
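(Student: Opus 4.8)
The plan is to specialize $b = q/a$, $c = d = 0$ in Corollary~\ref{cor:AWWc=d=0} and simplify the inner sum $\sum_{u+v+2t=k} a^u b^v (-1)^t q^{\binom{t+1}{2}} \qbinom{u+v+t}{u,v,t}$. First I would substitute $b = q/a$ so that the monomial $a^u b^v$ becomes $a^{u-v} q^v$, and observe that the binomial prefactor $\binom{n}{\frac{n-k}{2}} - \binom{n}{\frac{n-k}{2}-1}$ is untouched; the whole problem reduces to showing
\[
\sum_{u+v+2t=k} (-1)^t q^{v+\binom{t+1}{2}} \frac{(q;q)_{u+v+t}}{(q;q)_u (q;q)_v (q;q)_t} \, a^{u-v}
= (q/a)^k \sum_{i=0}^k a^{2i} q^{i(k-i-1)} \cdot a^k
\]
wait — let me be careful: the right-hand side is $(q/a)^k \sum_{i=0}^k a^{2i} q^{i(k-i-1)}$, so after clearing the $a$-powers the claim is that the left side equals $q^k \sum_{i=0}^k a^{2i-k} q^{i(k-i-1)}$. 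Matching powers of $a$: the term $a^{u-v}$ contributes to the coefficient of $a^{2i-k}$ exactly when $u - v = 2i - k$, i.e. (using $u + v = k - 2t$) when $u = i - t$ and $v = k - i - t$. So the coefficient of $a^{2i-k}$ on the left is
\[
\sum_{t \ge 0} (-1)^t q^{(k-i-t) + \binom{t+1}{2}} \frac{(q;q)_{k-2t}\,(q;q)_{?}}{\ldots},
\]
and the core identity to prove becomes, for each fixed $i$ and $k$,
\[
\sum_{t} (-1)^t q^{\binom{t+1}{2} - t} \qbinom{k-t-i}{t}_{\!} \cdots = q^{i} q^{i(k-i-1) - (k-i)} \cdots
\]
— a single-variable $q$-series identity in $t$ whose shape is a terminating $q$-binomial / $q$-Vandermonde summation.

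More precisely, the key step is to recognize the $t$-sum as an instance of the $q$-analogue of the Vandermonde–Chu identity (or equivalently, a terminating ${}_2\phi_1(q^{-N};\cdot)$ evaluation, \cite[II.6]{GR}), after pulling out the common factor $q^{k-i}$ and rewriting the multinomial $q$-coefficient $\qbinom{u+v+t}{u,v,t} = \qbinom{u+v+t}{t}\qbinom{u+v}{u}$ with $u = i-t$, $v = k-i-t$, $u+v = k-2t$, $u+v+t = k-t$. Thus the summand becomes $(-1)^t q^{\binom{t+1}{2}} \qbinom{k-t}{t} \qbinom{k-2t}{i-t}$ times $q^{k-i}$, and the identity to verify is
\[
\sum_{t=0}^{\lfloor k/2\rfloor} (-1)^t q^{\binom{t+1}{2}} \qbinom{k-t}{t} \qbinom{k-2t}{i-t} = q^{i(k-i-1)},
\]
for $0 \le i \le k$. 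I would prove this by generating functions: the left-hand side is the coefficient extraction showing that $\sum_k \big(\sum_t \cdots\big) x^k$ factors, using the classical identity $\sum_t (-1)^t q^{\binom{t+1}{2}} \qbinom{k-t}{t} x^t$-type expansions that arise in the theory of $q$-Fibonacci / Rogers–Ramanujan-style continued fractions; alternatively, proceed by induction on $k$ using the Pascal-type recurrences for the two $q$-binomial factors. The cleanest route is probably to sum over $t$ first via the $q$-Chu–Vandermonde formula after writing $\qbinom{k-t}{t}\qbinom{k-2t}{i-t}$ as a ratio of $q$-Pochhammer symbols and collapsing two of them.

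The main obstacle I anticipate is bookkeeping the exponent of $q$: Corollary~\ref{cor:AWWc=d=0} carries $q^{\binom{t+1}{2}}$, the substitution $b = q/a$ injects $q^v = q^{k-i-t}$, and the target carries $q^{i(k-i-1)}$, so verifying that $\binom{t+1}{2} + (k-i-t) + (\text{exponent from the }q\text{-Vandermonde sum}) = i(k-i-1)$ identically requires care — in particular checking the boundary cases $i = 0$ and $i = k$ (where the $t$-sum should collapse to a single term, giving $q^0 = 1$ and $q^{k(k-1) \cdot 0}\cdots$ — actually $i(k-i-1)$ vanishes at both ends only when $k \le 1$, so the check at $i=0$ gives $q^{0}$ matching $t=0$ alone, and at $i=k$ gives $q^{k(-1)} = q^{-k}$, which signals I've mis-signed an exponent somewhere and will need to re-derive the $a$-power matching). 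Once the single-variable identity
\[
\sum_{t=0}^{\lfloor k/2 \rfloor} (-1)^t q^{\binom{t+1}{2}} \qbinom{k-t}{t} \qbinom{k-2t}{i-t} = q^{i(k-i-1)}
\]
(or its correctly-signed variant) is established — most safely by a short induction on $k$ using $\qbinom{m}{r} = \qbinom{m-1}{r} + q^{m-r}\qbinom{m-1}{r-1}$ on both factors and matching against the recurrence $q^{i(k-i-1)} = q^{(i)(k-i-1)}$ split by the last step of a lattice path — the corollary follows immediately by reassembling the $a$-powers and the binomial prefactor from Corollary~\ref{cor:AWWc=d=0}.
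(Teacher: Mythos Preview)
Your plan is exactly the paper's: put $b=q/a$ in Corollary~\ref{cor:AWWc=d=0}, set $i=u+t$ (equivalently, match powers of $a$ as you do), and evaluate the remaining $t$-sum by $q$-Vandermonde. Your first instinct---$q$-Vandermonde---is the right tool; there is no need for induction or a generating-function argument.

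The only slip is the $q$-bookkeeping you yourself flagged. You dropped the contribution $q^{v}=q^{k-i-t}$ from $b^v=(q/a)^v$ when writing down the clean identity, and forgot the overall $q^k$ from $(q/a)^k$ on the target side. Restoring these (and using $\binom{t+1}{2}-t=\binom{t}{2}$) the identity to prove is
\[
\sum_{t\ge 0} (-1)^t q^{\binom{t}{2}} \qbinom{k-t}{t}\qbinom{k-2t}{i-t} \;=\; q^{\,i(k-i)},
\qquad 0\le i\le k,
\]
and now both boundary cases $i=0$ and $i=k$ give $1=1$, removing the apparent contradiction you spotted. Writing the $q$-trinomial as $\qbinom{k-i}{t}\qbinom{k-t}{k-i}$ and setting $m=k-i$, this becomes
\[
\sum_{t=0}^{m} (-1)^t q^{\binom{t}{2}} \qbinom{m}{t}\qbinom{k-t}{m}
\;=\;[y^{k-m}]\,\frac{(y;q)_m}{(y;q)_{m+1}}
\;=\;[y^{k-m}]\,\frac{1}{1-q^{m}y}
\;=\;q^{m(k-m)},
\]
which is precisely a $q$-Vandermonde (equivalently, the finite $q$-binomial theorem applied inside the standard generating function $\sum_{j\ge 0}\qbinom{j+m}{m}y^j=1/(y;q)_{m+1}$). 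This is what the paper means by ``the resulting $t$-sum is evaluable by the $q$-Vandermonde sum.''
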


\begin{cor}\label{cor:bb=-a}
We have $\mu_{2n+1}(a,-a,0,0;q)=0$ and 
\[
4^n\mu_{2n}(a,-a,0,0;q) = 
\sum_{k=0}^{n} \left( \binom{2n}{n-k}-\binom{2n}{n-k-1}\right)
\sum_{i=0}^k (-1)^i q^{\binom{i+1}2} (q;q^2)_{k-i} a^{2k-2i} \qbinom{2k-i}{i}. 
\]
\end{cor}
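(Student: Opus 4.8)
The plan is to deduce both assertions from Corollary~\ref{cor:AWWc=d=0} by setting $b=-a$, using $a^u b^v=(-1)^v a^{u+v}$. With this substitution the right-hand side of Corollary~\ref{cor:AWWc=d=0} becomes
\[
2^n\mu_n(a,-a,0,0;q)=\sum_{k=0}^{n}\left(\binom{n}{\frac{n-k}2}-\binom{n}{\frac{n-k}2-1}\right)\sum_{u+v+2t=k}(-1)^{v+t}q^{\binom{t+1}2}a^{u+v}\qbinom{u+v+t}{u,v,t},
\]
the inner sum being over nonnegative integers $u,v,t$ with $u+v+2t=k$. Note that the outer coefficient vanishes unless $(n-k)/2$ is a nonnegative integer, i.e.\ unless $k\equiv n\pmod 2$.

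For the vanishing of the odd moments, suppose $n$ is odd; then only odd $k$ contribute, and for odd $k$ one has $u+v$ odd, so $u\neq v$ in every term of the inner sum. The map $(u,v,t)\mapsto(v,u,t)$ is then a fixed-point-free involution of the index set, and since the $q$-multinomial coefficient is symmetric in $u$ and $v$ while the factor $(-1)^v$ changes sign (the remaining factors being fixed), this involution sends each summand to its negative. Hence every inner sum is $0$, so $\mu_{2n+1}(a,-a,0,0;q)=0$.

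For the even moments, use $4^n=2^{2n}$ and replace $n$ by $2n$ in the display above; now only even $k=2\kappa$ survive, with $(2n-k)/2=n-\kappa$, and it remains to identify, for each $0\le\kappa\le n$, the inner sum with $\sum_{i=0}^{\kappa}(-1)^iq^{\binom{i+1}2}(q;q^2)_{\kappa-i}a^{2\kappa-2i}\qbinom{2\kappa-i}{i}$. To do this I would factor $\qbinom{u+v+t}{u,v,t}=\qbinom{u+v+t}{t}\qbinom{u+v}{u}$ and, for fixed $t$, put $j=\kappa-t$ so that $u+v=2j$; the inner sum then reads
\[
\sum_{j=0}^{\kappa}(-1)^{\kappa-j}q^{\binom{\kappa-j+1}2}\qbinom{\kappa+j}{\kappa-j}a^{2j}\sum_{u=0}^{2j}(-1)^{u}\qbinom{2j}{u}.
\]
Evaluating the remaining $u$-sum by the familiar alternating $q$-binomial identity $\sum_{u=0}^{N}(-1)^u\qbinom{N}{u}=(q;q^2)_{N/2}$ for even $N$ (cf.\ \cite{GR}) and then substituting $i=\kappa-j$ produces exactly the claimed expression, since $\qbinom{\kappa+j}{\kappa-j}=\qbinom{2\kappa-i}{i}$.

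The only ingredient beyond routine reindexing is that alternating $q$-binomial evaluation, which is classical, so I do not anticipate a genuine obstacle; the one point to be careful about is keeping the parity bookkeeping straight (the $q$-multinomial symmetry and the sign $(-1)^v$) so that the odd case collapses and the even case reassembles correctly.
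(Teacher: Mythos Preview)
Your proof is correct and follows essentially the same route as the paper: set $b=-a$ in Corollary~\ref{cor:AWWc=d=0}, factor $\qbinom{u+v+t}{u,v,t}=\qbinom{u+v+t}{t}\qbinom{u+v}{u}$, and evaluate the alternating $q$-binomial sum via the Gaussian identity $\sum_{u}(-1)^u\qbinom{N}{u}=(q;q^2)_{N/2}$ (even $N$) or $0$ (odd $N$). Your sign-reversing involution for the odd case is just the combinatorial proof of the odd Gaussian identity, so the two arguments coincide.
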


\begin{proof}[Proof of Corollary~\ref{cor:bb=q/a}]
Put $b=q/a$ in Corollary~\ref{cor:AWWc=d=0}, and substitute $v=k-2t-u$, and then 
$t+u=i$. The resulting $t$-sum is evaluable by the $q$-Vandermonde 
sum \cite[II.6]{GR}.
\end{proof}

Corollary~\ref{cor:bb=q/a} explicitly proves that $2^n \mu_n(a,q/a,0,0;q)$ is a
Laurent polynomial in $a$, with coefficients that are polynomials in $q$, with
positive integer coefficients. Corollary~\ref{cor:flipcor} its analogue for
$\mu_n(a,q/a,c,q/c;q).$

 \begin{proof}[Proof of Corollary~\ref{cor:bb=-a}]
We can rewrite \eqref{eq:ASC1} as
\[
2^n\mu_n(a,-a,0,0;q) = 
\sum_{k=0}^{n} \left( \binom{n}{\frac{n-k}2}-\binom{n}{\frac{n-k}2-1}\right)
\sum_{i=0}^{\flr{k/2}} (-1)^i q^{\binom{i+1}2} \qbinom{k-i}{k-2i} a^{k-2i}
\sum_{j=0}^{k-2i} (-1)^j  \qbinom{k-2i}{j}. 
\]  
Then we are done by the Gaussian formula \cite[Theorem 10, p.71]{AE}
\begin{equation}
  \label{eq:Gaussian_formula}
\sum_{i=0}^n  (-1)^i \qbinom{n}{i} =
\left\{ 
  \begin{array}{ll}
    0, & \mbox{if $n$ is odd,}\\
    (q;q^2)_{n/2}, & \mbox{if $n$ is even.}
  \end{array}
\right.
\nonumber
\end{equation}
\end{proof}

Another special case of the Askey-Wilson polynomials has a different expression for the moments. Consider $b=-a$ and $d=-c$, so that the Askey-Wilson measure is symmetric 
about the $y$-axis. In this case $b_n=0$, so the odd moments are zero, 
and the $2\nth$ moment has a shorter alternative expression.

\begin{thm} \label{thm:AWWc=-a,d=-c}
The non-zero Askey-Wilson moments for $b=-a$ and $d=-c$ are
\[
4^n\mu_{2n}(a,-a,c,-c;q)= \sum_{m=0}^n
\frac{(-a^2;q)_{2 m}(a^2c^2;q^2)_{m}}{(qa^2c^2;q^2)_m} q^{2m}
\sum_{j=0}^m  \frac{a^{-4j} q^{-2j^2} (aq^{j}+ a^{-1}q^{-j})^{2n}}
{(q^2,a^4q^{2+4j}; q^2)_{m-j} (q^2,a^{-4}q^{2-4j}; q^2)_{j}}.
\]
\end{thm}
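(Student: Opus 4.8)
The plan is to reduce this symmetric case to the general formula~\eqref{AWWmomformula} by the classical quadratic transformation of the Askey--Wilson measure. When $b=-a$ and $d=-c$ the weight~\eqref{eq:w} is even in $x=\cos\theta$; pairing its factors and using $(u;q)_\infty(-u;q)_\infty=(u^2;q^2)_\infty$ together with $(u;q)_\infty=(u;q^2)_\infty(uq;q^2)_\infty$, one finds that $w(\cos\theta;a,-a,c,-c;q)=\widetilde w(\cos 2\theta)$, where $\widetilde w$ is the Askey--Wilson weight in base $q^2$ with parameters $(a^2,c^2,-1,-q)$. Pushing the measure forward under $x\mapsto y=2x^2-1$ (a two-to-one fold of $[-1,1]$ onto itself) and writing $x^{2n}=2^{-n}(y+1)^n$, the first step is the identity
\[
\mu_{2n}(a,-a,c,-c;q)=2^{-n}\sum_{k=0}^{n}\binom{n}{k}\,\mu_k(a^2,c^2,-1,-q;q^2),
\]
whose normalization I would confirm by comparing total masses via~\eqref{eq:AW_integral} in the two bases.

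Next I would substitute~\eqref{AWWmomformula}, in base $q^2$ and with parameters $(a^2,c^2,-1,-q)$, for $\mu_k(a^2,c^2,-1,-q;q^2)$. Writing the inner sum of~\eqref{AWWmomformula} in terms of the ``nodes'' $\xi_j:=a^2q^{2j}+a^{-2}q^{-2j}$, its summands $V_{m,j}:=\frac{q^{-2j^2}a^{-4j}}{(q^2,a^{-4}q^{2-4j};q^2)_j\,(q^2,a^4q^{2+4j};q^2)_{m-j}}$ are, up to a factor independent of $j$, the Lagrange (divided-difference) coefficients $1/\prod_{i\ne j}(\xi_j-\xi_i)$ for $\xi_0,\dots,\xi_m$; hence $\sum_{j=0}^m V_{m,j}\,p(\xi_j)=0$ for every polynomial $p$ of degree $<m$, and in particular the $m$-sum in~\eqref{AWWmomformula} may be taken over all $m\ge0$. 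Interchanging the $k$- and $m$-summations, the remaining $k$-sum is a full binomial sum $\sum_{k=0}^{n}\binom nk 2^{-k}\xi_j^{\,k}=(1+\tfrac12\xi_j)^n$, and since $2+\xi_j=(aq^j+a^{-1}q^{-j})^2$ this equals $2^{-n}(aq^j+a^{-1}q^{-j})^{2n}$. Collecting the simplifications $(a^2c^2,-a^2,-qa^2;q^2)_m=(a^2c^2;q^2)_m(-a^2;q)_{2m}$ and $(a^2c^2\cdot(-1)(-q);q^2)_m=(qa^2c^2;q^2)_m$ for the outer summand, and the powers of $2$ (an overall $2^{-n}$ from the transformation and the new $2^{-n}$, giving $4^{-n}$), produces the asserted double sum; and because $(aq^j+a^{-1}q^{-j})^{2n}=(\xi_j+2)^n$ is a polynomial of degree $n$ in $\xi_j$, the inner sum $\sum_{j=0}^m V_{m,j}(aq^j+a^{-1}q^{-j})^{2n}$ vanishes for $m>n$, so the $m$-sum truncates at $m=n$ exactly as stated.

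The main thing to pin down carefully is the quadratic transformation together with the claim that the $j$-sum weights in~\eqref{AWWmomformula} are divided-difference coefficients for the nodes $aq^j+q^{-j}/a$ (equivalently, that the $m$-sum may be extended to infinity); this is the interpolation identity underlying~\eqref{AWWmomformula} itself, and I would either record it as a short lemma or invoke the corresponding statement from~\cite{CSSW}. One further point: the parameter value $-1$ lies on the boundary $|{-1}|=1$ of the usual convergence region, but this is harmless, since~\eqref{AWWmomformula} and the target are both identities of rational functions in $a,c,q$ (the apparent singularity of $\widetilde w$ at $y=-1$ is cancelled by a zero of the numerator of~\eqref{eq:w}), so it suffices to run the argument for $a,c$ near $0$ and $q\in(0,1)$.
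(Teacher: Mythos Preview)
Your proposal is correct and follows essentially the same route as the paper's proof: both use the quadratic substitution $y=\cos 2\theta=2x^2-1$ and the $q$-Taylor expansion of $((y+1)/2)^n$ in the basis $(a^2e^{2i\theta},a^2e^{-2i\theta};q^2)_m$, with the remaining integrals reducing to Askey--Wilson integrals in base $q^2$.

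The only difference is one of packaging. The paper computes the $q$-Taylor coefficients $A_m$ directly and then evaluates $\int (a^2e^{2i\theta},a^2e^{-2i\theta};q^2)_m\,w(\cos\theta;a,-a,c,-c;q)\,d\theta$ as a shifted Askey--Wilson integral (via \cite[Lemma~3.2]{CSSW}). You instead first make the quadratic transformation explicit at the level of the \emph{measure}, identifying $w(\cos\theta;a,-a,c,-c;q)$ with the base-$q^2$ Askey--Wilson weight with parameters $(a^2,c^2,-1,-q)$, so that $\mu_{2n}(a,-a,c,-c;q)=2^{-n}\sum_k\binom{n}{k}\mu_k(a^2,c^2,-1,-q;q^2)$; then you invoke \eqref{AWWmomformula} as a black box and collapse the binomial sum using $(aq^j+a^{-1}q^{-j})^2=2+\xi_j$. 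Your divided-difference remark---that the inner $j$-sum of \eqref{AWWmomformula} annihilates polynomials in $\xi_j$ of degree $<m$---is exactly the content of the $q$-Taylor theorem underlying \cite[Proposition~3.1]{CSSW}, so the two arguments are the same in substance; yours simply reuses \eqref{AWWmomformula} rather than reproving its ingredients. Your handling of the boundary parameter $-1$ by rational-function continuation is also fine.
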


\begin{proof}
 We use the idea of the proof of \cite[Proposition 3.1]{CSSW}. 
Let $y=2x^2-1=\cos(2\theta)$. 
We need to find the coefficients $A_m$ in the expansion
$$
\biggl(\frac{y+1}{2}\biggr)^n=x^{2n}=\sum_{m=0}^n A_m 
(a^2e^{2i\theta}, a^2e^{-2i\theta};q^2)_m.
$$ 
This may be done using the $q$-Taylor theorem, see 
\cite[Theorem 1.1] {IS2003}. The resulting Askey-Wilson 
integral is evaluated as in \cite[Lemma 3.2] {CSSW}. We do not give the details.
\end{proof}

Theorem~\ref{thm:AWWc=-a,d=-c} can be simplified in exactly the same way that 
\eqref{AWWmomformula} implied Theorem~\ref{thm:AWWmom} to obtain 
Theorem~\ref{thm:AWWsymm}. Again the details are not given.

\begin{thm} \label{thm:AWWsymm}
The non-zero Askey-Wilson moments for $b=-a$ and $d=-c$ are
\begin{multline*}
4^n \mu_{2n}(a,-a,c,-c;q) =
\sum_{m=0}^n \frac{(-a^2;q)_{2m}(a^2c^2;q^2)_m}
{(qa^2c^2;q^2)_m} (-q^2)^{m}\\
\times \sum_{s=0}^{2n+2} 
\left(\binom{2n+1}{s} - \binom{2n+1}{s-1} \right)
\sum_{p=0}^{n-m-s}  a^{-2n+4p+2s}
\Qbinom{m+p}{m}{q^2} 
\Qbinom{n-p-s}{m}{q^2} q^{-2m(n-p-s)+m(m-1)}. 
 \end{multline*}
\end{thm}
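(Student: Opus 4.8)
The plan is to carry out the same reduction that turned \eqref{AWWmomformula} into Theorem~\ref{thm:AWWmom}, now starting from Theorem~\ref{thm:AWWc=-a,d=-c} and passing the inner sum through the parameter changes $q\mapsto q^2$ and $a^2\mapsto a^4$. First I would simplify the $j$-summand of Theorem~\ref{thm:AWWc=-a,d=-c}. Two $q$-Pochhammer identities do the bulk of the work: the reflection formula $(X^{-1}q^{2(1-j)};q^2)_j=(-1)^jX^{-j}(q^2)^{-\binom j2}(X;q^2)_j$ applied with $X=a^4q^{2j}$, which flips $(a^{-4}q^{2-4j};q^2)_j$, and the telescoping identity $(a^4q^{2j};q^2)_j\,(a^4q^{2+4j};q^2)_{m-j}=(a^4q^{2j};q^2)_{m+1}/(1-a^4q^{4j})$, the single missing factor being $1-a^4q^{4j}$. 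Combining these with $(q^2;q^2)_j(q^2;q^2)_{m-j}=(q^2;q^2)_m/\Qbinom mj{q^2}$ and $(aq^j+a^{-1}q^{-j})^{2n}=a^{-2n}q^{-2jn}(1+a^2q^{2j})^{2n}$, the $j$-summand becomes
\[
(-1)^j (q^2)^{\binom j2}\Qbinom mj{q^2}\,\frac{a^{-2n}q^{-2jn}(1+a^2q^{2j})^{2n}(1-a^4q^{4j})}{(q^2;q^2)_m\,(a^4q^{2j};q^2)_{m+1}}.
\]

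The second step is the observation that explains the appearance of an \emph{odd} binomial coefficient: since $1-a^4q^{4j}=(1-a^2q^{2j})(1+a^2q^{2j})$, one has $(1+a^2q^{2j})^{2n}(1-a^4q^{4j})=(1+a^2q^{2j})^{2n+1}(1-a^2q^{2j})=\sum_s\big(\binom{2n+1}s-\binom{2n+1}{s-1}\big)a^{2s}q^{2js}$ by the ordinary binomial theorem. Expanding $1/(a^4q^{2j};q^2)_{m+1}=\sum_{p\ge0}\Qbinom{m+p}m{q^2}a^{4p}q^{2jp}$ by the $q^2$-binomial theorem and then performing the $j$-sum by the terminating $q^2$-binomial theorem $\sum_j(-1)^j(q^2)^{\binom j2}\Qbinom mj{q^2}y^j=(y;q^2)_m$ at $y=q^{2(-n+s+p)}$ yields the $q^2$-analogue of the intermediate display in the proof of Theorem~\ref{thm:AWWmom}:
\begin{multline*}
4^n\mu_{2n}(a,-a,c,-c;q)=\sum_{m=0}^n\frac{(-a^2;q)_{2m}(a^2c^2;q^2)_m}{(q^2,qa^2c^2;q^2)_m}q^{2m}\\
\times\sum_{s}\Big(\binom{2n+1}s-\binom{2n+1}{s-1}\Big)\sum_{p\ge0}a^{-2n+2s+4p}\Qbinom{m+p}m{q^2}\,(q^{2(-n+s+p)};q^2)_m.
\end{multline*}

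For the final step I would imitate the support analysis there line by line. Extend the $s$- and $p$-sums over all of $\Z$, with extended binomial and $q^2$-binomial coefficients declared zero; apply $s\mapsto 2n+2-s$ followed by $p\mapsto p-n-1+s$ to the $-\binom{2n+1}{s-1}$ term, so that both differenced summands carry the common prefactor $a^{-2n+2s+4p}$; and record that $(q^{2(-n+s+p)};q^2)_m$ vanishes unless $-n+s+p\le -m$ or $-n+s+p\ge 1$. On the range $p\ge 0$, $-n+s+p\ge 1$ the two summands are nonzero and equal, so they cancel; what remains is the first summand on $0\le p\le n-m-s$ together with a copy of the second summand which, once both substitutions are undone, is again supported on $0\le p\le n-m-s$. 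This reinstates the difference $\binom{2n+1}s-\binom{2n+1}{s-1}$ and the range $0\le p\le n-m-s$. Finally, since $0\le p\le n-m-s$ forces $-n+s+p=-(n-s-p)\le -m$, the conversion $(q^{-2(n-s-p)};q^2)_m=(-1)^m q^{-2m(n-s-p)+m(m-1)}(q^2;q^2)_m\Qbinom{n-s-p}m{q^2}$ cancels the spurious $(q^2;q^2)_m$ and turns $q^{2m}$ into $(-q^2)^m q^{-2m(n-p-s)+m(m-1)}$, which is exactly Theorem~\ref{thm:AWWsymm}. I expect the only genuinely delicate part to be this last bookkeeping — keeping straight which of the two differenced terms is nonzero on which integer interval, and verifying that the double substitution reverses cleanly — since everything else is a mechanical transcription of the $\nth$-moment computation into its $q^2,a^4$ form.
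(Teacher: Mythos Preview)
Your proposal is correct and follows exactly the route the paper indicates: the paper's own proof simply says that Theorem~\ref{thm:AWWc=-a,d=-c} ``can be simplified in exactly the same way that \eqref{AWWmomformula} implied Theorem~\ref{thm:AWWmom},'' without details, and you have supplied those details accurately. In particular, your use of the factorization $1-a^4q^{4j}=(1-a^2q^{2j})(1+a^2q^{2j})$ to promote the exponent from $2n$ to $2n+1$, your $q^2$-binomial expansions, and your support bookkeeping after the substitution $s\mapsto 2n+2-s$, $p\mapsto p-n-1+s$ all match the $q^2$/$a^4$ transcription of the argument in the proof of Theorem~\ref{thm:AWWmom}.
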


There is a corresponding positivity conjecture for the moments $\mu_{2n}(a,-a,c,-c;q)$. 
These moments are not polynomials, but Proposition~\ref{prop:polymom} implies that 
$4^n (a^2c^2;q)_{2n} \mu_{2n}(a,-a,c,-c;q)$ is a polynomial in $a$, $c$, and $q$. 
Half of the apparent poles of  $\mu_{2n}(a,-a,c,-c;q)$ do not occur.

\begin{prop} 
\label{prop:AWWsymmmom}The Askey-Wilson moments
\begin{eqnarray*}
\tau_{2n}(a^2,c^2)=4^n (qa^2c^2;q^2)_n \mu_{2n}(a,-a,c,-c;q)/(1-q)^n
\end{eqnarray*}
are polynomials in $a^2$, $c^2$ and $q$ with integer coefficients. Moreover 
the sum of the coefficients in $\tau_{2n}(a^2,c^2)$ is $2^{2n}(2n-1)(2n-3)\cdots 1$.
\end{prop}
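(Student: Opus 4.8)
The plan is to combine the explicit evaluation of Theorem~\ref{thm:AWWsymm} with the Motzkin path model of the moments and a limit at $q=1$; write $u=a^{2}$, $v=c^{2}$ throughout. Since $b_{n}=0$ when $b=-a$, $d=-c$, Viennot's theory \cite{V} gives the even moments as generating functions for weighted Dyck paths,
\[
\mu_{2n}(a,-a,c,-c;q)=\sum_{\pi}\prod_{i=1}^{n}\lambda_{h_{i}(\pi)},
\]
where $\pi$ runs over lattice paths from $(0,0)$ to $(2n,0)$ with steps $(1,\pm1)$ that stay weakly above the horizontal axis, and a down-step from height $h$ to $h-1$ contributes $\lambda_{h}=\tfrac14 A_{h-1}C_{h}$. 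Substituting $ab=-u$, $abcd=uv$, $cd=-v$, etc.\ into the formulas for $A_{n},C_{n}$ from Section~\ref{sec:introduction} and simplifying gives
\[
\lambda_{h}=\frac{(1+uq^{h-1})(1-uvq^{h-2})(1-q^{h})(1+vq^{h-1})}{4(1-uvq^{2h-3})(1-uvq^{2h-1})}\qquad(h\ge 1),
\]
with $\lambda_{1}=(1+u)(1+v)(1-q)/\bigl(4(1-uvq)\bigr)$. Only $1\le h\le n$ occurs, and each such $\lambda_{h}$ contains the factor $1-q^{h}$ and hence is divisible by $1-q$; writing $\lambda_{h}=(1-q)\nu_{h}$ we get $\mu_{2n}/(1-q)^{n}=\sum_{\pi}\prod_{i}\nu_{h_{i}}$, a rational function with no pole at $q=1$ whose only denominators are factors $1-uvq^{2h-1}$.

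For the polynomiality I use two facts. First, multiplying Theorem~\ref{thm:AWWsymm} by $(qa^{2}c^{2};q^{2})_{n}$ and using $(qa^{2}c^{2};q^{2})_{n}/(qa^{2}c^{2};q^{2})_{m}=(q^{2m+1}a^{2}c^{2};q^{2})_{n-m}$ exhibits $4^{n}(qa^{2}c^{2};q^{2})_{n}\mu_{2n}(a,-a,c,-c;q)$ as an element of $\mathbb{Z}[u^{\pm1},v,q^{\pm1}]$: every surviving $q$-shifted factorial is now a genuine polynomial, so the only denominators are powers of $a$ and of $q$. This is precisely the statement that the ``even half'' $(a^{2}c^{2};q^{2})_{n}$ of the apparent poles of $\mu_{2n}$ does not occur. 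Second, Proposition~\ref{prop:polymom} with $b=-a$, $d=-c$ gives $4^{n}(a^{2}c^{2};q)_{2n}\mu_{2n}=4^{n}(a^{2}c^{2};q^{2})_{n}(qa^{2}c^{2};q^{2})_{n}\mu_{2n}\in\mathbb{Z}[a,c,q]$, using the standard splitting of a $q$-Pochhammer into even and odd parts. Since $(a^{2}c^{2};q^{2})_{n}$ is a primitive polynomial coprime in $\mathbb{Z}[a,c,q]$ to $a$ and to $q$, a short divisibility argument (clearing the only possible denominators, powers of $a$ and $q$, against this coprimality) upgrades the first membership to $4^{n}(qa^{2}c^{2};q^{2})_{n}\mu_{2n}\in\mathbb{Z}[a,c,q]$, and as this is even in $a$ and in $c$ it lies in $\mathbb{Z}[u,v,q]$. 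Finally $(1-q)^{n}$ divides $\mu_{2n}$ by the first paragraph, hence divides this polynomial, and dividing out (again $1-q$ is primitive) shows $\tau_{2n}(a^{2},c^{2})=4^{n}(qa^{2}c^{2};q^{2})_{n}\mu_{2n}(a,-a,c,-c;q)/(1-q)^{n}\in\mathbb{Z}[u,v,q]$.

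For the sum of the coefficients I evaluate the polynomial $\tau_{2n}$ at $u=v=q=1$; this equals $\lim_{q\to1}4^{n}(qa^{2}c^{2};q^{2})_{n}\,\mu_{2n}/(1-q)^{n}$ with $u=v=1$ substituted afterward. Here $(qa^{2}c^{2};q^{2})_{n}\to(1-uv)^{n}$ and, since $(1-q^{h})/(1-q)\to h$, one gets $\lim_{q\to1}\nu_{h}=h(1+u)(1+v)/\bigl(4(1-uv)\bigr)$ for every $h\ge1$, so the Dyck path expansion yields
\[
\lim_{q\to1}\frac{\mu_{2n}(a,-a,c,-c;q)}{(1-q)^{n}}=\left(\frac{(1+u)(1+v)}{4(1-uv)}\right)^{n}\sum_{\pi}\prod_{i=1}^{n}h_{i}(\pi).
\]
The remaining sum is the $2n$-th moment of the orthogonal polynomials with $b_{n}=0$ and $\lambda_{n}=n$, i.e.\ the Hermite moment, equal to the number of perfect matchings of $\{1,\dots,2n\}$, namely $(2n-1)(2n-3)\cdots1$. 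Hence $\lim_{q\to1}\tau_{2n}=4^{n}(1-uv)^{n}\bigl((1+u)(1+v)/(4(1-uv))\bigr)^{n}(2n-1)(2n-3)\cdots1=(1+u)^{n}(1+v)^{n}(2n-1)(2n-3)\cdots1$, and setting $u=v=1$ gives the sum of the coefficients of $\tau_{2n}$ equal to $2^{2n}(2n-1)(2n-3)\cdots1$.

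I expect the main obstacle to be the polynomiality in $a^{2},c^{2}$, that is, the vanishing of ``half of the apparent poles.'' This cancellation is not visible in the path model, where a single term $\prod_{i}\lambda_{h_{i}}$ may have a pole of order $n$ at $uvq=1$ that disappears only after summing over all $\pi$; yet the explicit formula of Theorem~\ref{thm:AWWsymm}, which makes the cancellation manifest, does not display the compensating factor $(1-q)^{n}$. The proof therefore has to establish the two halves separately — one analytically, one combinatorially — and splice them together via Gauss's lemma; the $(1-q)^{n}$ divisibility, the integrality, and the $q\to1$ evaluation are then routine.
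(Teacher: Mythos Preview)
Your proof is correct and uses essentially the same ingredients as the paper: the Dyck-path model to see that the only possible denominators of $4^n\mu_{2n}/(1-q)^n$ are products of $1-uvq^{2h-1}$, an explicit formula (you use Theorem~\ref{thm:AWWsymm} together with Proposition~\ref{prop:polymom}, the paper uses Theorem~\ref{thm:AWWc=-a,d=-c}) to see that the poles are at most simple, and the $q\to1$ limit identifying the rescaled recurrence with the Hermite one for the coefficient sum. The paper's packaging is a little more economical: it rescales at the outset so that the three-term coefficient becomes $\lambda_n=(1+a^2q^{n-1})(1+c^2q^{n-1})(1-a^2c^2q^{n-2})[n]_q/\bigl((1-a^2c^2q^{2n-3})(1-a^2c^2q^{2n-1})\bigr)$, which absorbs the $4^n$ and the $(1-q)^n$ simultaneously and makes the Dyck-path denominator $\prod_i(1-a^2c^2q^{2i-1})^{n+1-i}$ immediate, so the UFD/coprimality bookkeeping you carry out explicitly is avoided.
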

\begin{proof}
After rescaling the Askey-Wilson polynomials, we have
$\tau_{2n}(a^2,c^2) = (qa^2c^2;q^2)_n \nu_{2n}$, 
where $\nu_{2n}$ are the $2\nth$ moments for the orthogonal polynomials defined by
\[
p_{n+1}(x) = x p_n(x) - \lambda_n p_{n-1}(x),
\]
where
\[
\lambda_n =
\frac{(1+a^2 q^{n-1}) (1+c^2 q^{n-1}) (1-a^2 c^2 q^{n-2})} 
{(1-a^2 c^2 q^{2n-3}) (1-a^2 c^2 q^{2 n-1})}[n]_q.
\]

From Viennot's theory \cite{V}, $\nu_{2n}$ is the generating function for 
Dyck paths from $(0,0)$ to $(2n,0)$ with weights given by a 
product of $\lambda_i's.$ So
\begin{eqnarray*}
\nu_{2n}= \frac{Q(a^2,c^2,q)}{\prod_{i=1}^n (1-a^2c^2q^{2i-1})^{n+1-i}}
\end{eqnarray*} 
for some polynomial $Q$ with integer coefficients. 
By Theorem \ref{thm:AWWc=-a,d=-c}, $\nu_{2n}$ has simple 
poles, as a function of $c^2$, possibly only at $(qa^2c^2;q^2)_n=0$.  So 
$(qa^2c^2;q^2)_n \nu_{2n}$ is a polynomial in $a^2$, $c^2$ and $q$, with integer coefficients.

For the final assertion, if $q=1$, $\lambda_n=n (1+a^2)(1+c^2)/(1-a^2c^2)$, which is the 
three term recurrence coefficient for a rescaled Hermite polynomial. The $2\nth$ moment for the Hermite 
polynomials is $(2n-1)(2n-3)\cdots 1$. 
\end{proof} 

\begin{conj} 
\label{conj:symmcon}
The coefficients of $\tau_{2n}(a^2,c^2)$  are non-negative integers.
\end{conj}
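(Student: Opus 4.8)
The plan is to upgrade the continued‑fraction description from the proof of Proposition~\ref{prop:AWWsymmmom} into a combinatorial model in which nonnegativity is manifest. By Viennot's theory \cite{V}, the rescaled moment $\nu_{2n}$ appearing there is the generating polynomial of Dyck paths $P$ from $(0,0)$ to $(2n,0)$ with weight $\prod_{i\ge 1}\lambda_i^{d_i(P)}$, where $d_i(P)$ counts the down steps of $P$ descending from height $i$ and
\[
\lambda_i=\frac{(1+a^2q^{i-1})(1+c^2q^{i-1})(1-a^2c^2q^{i-2})}{(1-a^2c^2q^{2i-3})(1-a^2c^2q^{2i-1})}\,[i]_q,
\]
so that $\tau_{2n}(a^2,c^2)=(qa^2c^2;q^2)_n\,\nu_{2n}$. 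Two kinds of cancellation must be tamed. First, the factor $1-a^2c^2q^{i-2}$ in the numerator of $\lambda_i$ carries a minus sign (and for $i=1$ a negative power of $q$, which, however, cancels against $1-a^2c^2q^{2i-3}$), so the weight of a single Dyck path is not even a polynomial with nonnegative coefficients, and these signs must vanish only after summing over all $P$. Second, although each $1/(1-a^2c^2q^m)$ expands as a nonnegative series, $\nu_{2n}$ a priori has multiple poles at $a^2c^2q^{2i-1}=1$, and by Proposition~\ref{prop:AWWsymmmom} only simple poles survive, which the prefactor $(qa^2c^2;q^2)_n$ just suffices to clear. The first task is therefore to realize both cancellations explicitly at the level of paths, by a weight‑preserving transformation of weighted Dyck paths that generalizes the device of D.~Kim \cite{Kim1997} used in Section~\ref{sec:combinatorial-proof-}: one wants to split each $\lambda_i$ into a finite sum indexed by an auxiliary decoration so that, after summing over all decorated paths, the spurious signs cancel pairwise and the nested geometric series telescope into exactly $(qa^2c^2;q^2)_n$. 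Carried out correctly, this would exhibit $\tau_{2n}(a^2,c^2)$ as the generating function of a set $\mathcal{S}_n$ of decorated matchings (or labelled trees) on $2n$ points carrying two binary statistics (the exponents of $a^2$ and $c^2$) and a nonnegative $q$-statistic of crossing/nesting type.

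The $q=1$ specialization is a guide to what $\mathcal{S}_n$ should be. There $[i]_q\to i$, every $\lambda_i\to i(1+a^2)(1+c^2)/(1-a^2c^2)$, and $(qa^2c^2;q^2)_n\to(1-a^2c^2)^n$, so, since $\sum_i d_i(P)=n$ on any Dyck path of semilength $n$, the denominators cancel path by path and
\[
\tau_{2n}(a^2,c^2)\big|_{q=1}=(1+a^2)^n(1+c^2)^n\sum_P\ \prod_{i\ge 1}i^{\,d_i(P)}=(1+a^2)^n(1+c^2)^n\,(2n-1)!!,
\]
in agreement with the last assertion of Proposition~\ref{prop:AWWsymmmom}. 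So at $q=1$ the model is transparent — $(2n-1)!!$ matchings of $[2n]$, with one independent sign choice at each of the $n$ openers contributing $a^2$ and one at each of the $n$ closers contributing $c^2$ — and the problem is to $q$-deform it so that the power of $q$ becomes a genuine nonnegative statistic and the simple pole is resolved into the $q^2$-shifted product. In building $\mathcal{S}_n$ I would also try to import the structure of the matchings model of Theorem~\ref{thm:match} and of the ``flip'' bijection underlying Corollary~\ref{cor:flipcor} and Corollary~\ref{cor:bb=q/a}, since $\mu_{2n}(a,-a,c,-c;q)$ is a near relative of $\mu_n(a,q/a,c,q/c;q)$, for which positivity is already known.

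A purely algebraic alternative is to start from Theorem~\ref{thm:AWWsymm}: multiplying through by $(qa^2c^2;q^2)_n$ replaces $(a^2c^2;q^2)_m/(qa^2c^2;q^2)_m$ by the polynomial $(a^2c^2;q^2)_m\,(q^{2m+1}a^2c^2;q^2)_{n-m}$, the ballot numbers $\binom{2n+1}{s}-\binom{2n+1}{s-1}$ are nonnegative for $s\le n$, and $(-a^2;q)_{2m}$, $\Qbinom{m+p}{m}{q^2}$, $\Qbinom{n-p-s}{m}{q^2}$ and $q^{m(m-1)}$ are nonnegative; one would then have to show that the alternating sums over $m$, $s$, $p$ (coming from $(-q^2)^m$ and from the two $q^2$-shifted products) collapse to a positive expression, presumably via a Vandermonde‑ or ${}_6\phi_5$-type evaluation like those behind Theorem~\ref{thm:AWWmom} and Corollary~\ref{cor:bb=q/a}. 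In either route I expect the main obstacle to be exactly this cancellation: the passage from the multiple poles and interior minus signs of $\nu_{2n}$ to the clean nonnegative polynomial $\tau_{2n}$ is a global feature of the whole continued fraction — equivalently of the whole triple sum — not of individual Dyck paths or individual summands, so a successful proof must organize the paths (or the summation indices) into blocks within which the cancellation is local, and finding the right blocking, i.e. the right $\mathcal{S}_n$, is the crux and is precisely what keeps the statement a conjecture.
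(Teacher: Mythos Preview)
This statement is a \emph{conjecture} in the paper, not a theorem: the paper does not prove it. The only remark following it is that the special case $q=0$ can be handled combinatorially (as a generating function for certain non-crossing complete matchings), with total coefficient sum $2^{2n}$. There is no proof to compare your proposal against.

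Your proposal is likewise not a proof but a research outline, and you say so yourself in the last sentence. The two strategies you sketch---a decorated Dyck-path / matching model in which the sign and pole cancellations are made local, and an algebraic attack on the triple sum of Theorem~\ref{thm:AWWsymm}---are reasonable lines of approach, and your analysis of the obstacles (the minus sign in $1-a^2c^2q^{i-2}$, the double poles collapsing to simple poles) is accurate. Your $q=1$ heuristic is complementary to the paper's $q=0$ remark; neither extends to general $q$. But no step in your write-up constitutes an actual argument: you never construct the set $\mathcal{S}_n$, never exhibit the sign-reversing involution, and never carry out the hypergeometric evaluation you allude to. So the status is unchanged---the conjecture remains open, as both you and the paper acknowledge.
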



If $q=0$, one may show that $\tau_{2n}(a^2,c^2)$ is a non-negative polynomial by a combinatorial method. The sum of the coefficients is $2^{2n}$. It is a 
generating function for certain non-crossing complete matchings. 

Although simple, \eqref{AWWmomformula} does not clearly demonstrate the symmetry 
or polynomiality of $\mu_n(a,b,c,d;q)$ in all four parameters 
$a$, $b$, $c$ and $d$. We next give 
such a formula, which generalizes Theorem~\ref{thm:AWWmom}.

Let $A$ be an arbitrary parameter. Let
\[
\ews(m) = \ews(A^2/q; A/a,A/b,A/c,A/d,q^{-m}; q; abcdq^m).
\]
Note that $(aA,bA,cA,dA;q)_m \ \ews(m)$ is a symmetric polynomial in $a,b,c,d$:
\begin{equation}
\label{eq:bigEq}
\begin{aligned}
(aA,bA,cA,dA;q)_m \ & \ews(m) = \sum_{j=0}^m 
\frac{(A^2/q;q)_j}{(A^2q^m;q)_j} 
\frac{1-A^2q^{2j-1}} {1-A^2/q}\qbinom{m}{j}(-1)^j q^{\binom{j}{2}}\\
&\times (abcd)^j(A/a,A/b,A/c,A/d;q)_j (Aaq^j,Abq^j,Acq^j,Adq^j;q)_{m-j}. 
\end{aligned}
\end{equation}

Using Watson's transformation \cite[(III.17)]{GR} of an $\ews$ to a$\ _4\phi_3$, 
the following apparent rational function of $A$ and $q$ is in fact a 
polynomial in each of the parameters: $a$, $b$, $c$, $d$, $A$, and $q.$
\begin{equation}
\label{eq:polybigEq}
\begin{aligned}
\frac{(aA,bA,cA,dA;q)_m}{(A^2;q)_m} \ & \ews(m) = \sum_{j=0}^m 
\qbinom{m}{j}(cd)^j (A/c,A/d;q)_j (ab;q)_j \\
&\times (Aaq^j,Abq^j,cd;q)_{m-j}. 
\end{aligned}
\nonumber
\end{equation}

The next result gives a symmetric polynomial version 
for $2^n (abcd;q)_n \mu_n(a,b,c,d;q).$ Theorem~\ref{thm:8W7} is 
independent of $A$.

\begin{thm}
\label{thm:8W7}
\begin{multline*}
2^n\mu_n(a,b,c,d;q) =\sum_{m=0}^n 
\frac{(aA,bA,cA,dA;q)_m}{(A^2,abcd;q)_m} (-q)^m \ews(m)
\sum_{s=0}^{n+1}  \left(\binom{n}{s} - \binom{n}{s-1}\right)\\
\times \sum_{p=0}^{n-2s-m} A^{-n+2s+2p}
\qbinom{m+p}{m} 
\qbinom{n-2s-p}{m} q^{m(-n+2s+p)+\binom{m}{2}}.
\nonumber
 \end{multline*}
\end{thm}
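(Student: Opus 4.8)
\textbf{Proof proposal for Theorem~\ref{thm:8W7}.}

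The plan is to reduce Theorem~\ref{thm:8W7} to Theorem~\ref{thm:AWWmom} by recognizing that the $m$-indexed factor in the new formula differs from the old one only in a way that is summed to unity by a known $q$-series evaluation, so that the $A$-dependence cancels. Concretely, the difference between the bracket in Theorem~\ref{thm:8W7} and the corresponding bracket in Theorem~\ref{thm:AWWmom} is that $\dfrac{(ab,ac,ad;q)_m}{(abcd;q)_m}a^{\cdots}$ is replaced by $\dfrac{(aA,bA,cA,dA;q)_m}{(A^2,abcd;q)_m}\ews(m)\,A^{\cdots}$. I would fix $n$, $s$, $p$ (hence the exponents $-n+2s+2p$ and $m(-n+2s+p)+\binom m2$, and the two $q$-binomials in $m$), and regard both sides as sums over $m$ only after collecting everything into a single hypergeometric series in which $A$ appears. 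The key identity to invoke is that the $\ews$ here is balanced and very-well-poised in exactly the configuration covered by a Watson- or Bailey-type transformation together with an explicit terminating summation; the claim of $A$-independence is equivalent to saying this series telescopes to the $A=a$ specialization, for which $(aA,bA,cA,dA;q)_m/(A^2;q)_m\,\ews(m)$ collapses (the $\ews$ becomes $1$ when $A=a$, since the parameter $A/a=1$ makes it a terminating $_8W_7$ with a numerator zero, or more precisely reduces it via the $q$-analogue of Dixon/Jackson).

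The cleanest route, and the one I would actually carry out, is: first substitute the expansion \eqref{eq:bigEq} for $(aA,bA,cA,dA;q)_m\,\ews(m)$ into the right-hand side of Theorem~\ref{thm:8W7}; this turns the $m$-sum into a double sum over $m$ and the inner index $j$. Second, swap the order of summation so $j$ is outermost, and in the remaining sum over $m\ge j$ shift $m\mapsto m+j$; the factors $(A/a,A/b,A/c,A/d;q)_j$, the powers of $abcd$, and the very-well-poised prefactor in $A^2$ will combine with the $q$-binomials and the $A^{-n+2s+2p}$ to produce, for each fixed $j$, a $q$-binomial-theorem-summable series in the new $m$. Third, after that inner sum is evaluated, the surviving $j$-sum should be recognizable — by the $q$-Vandermonde or $q$-Saalschütz identity \cite[II.6 or II.12]{GR}, exactly in the spirit of the $X$-computation in the proof of Theorem~\ref{thm:AWWd=0} — as collapsing to the single term that reconstitutes $\dfrac{(ab,ac,ad;q)_m}{(abcd;q)_m}a^{-n+2s+2p}$, thereby landing on Theorem~\ref{thm:AWWmom} and proving independence of $A$. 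Throughout, the polynomiality assertion is not something extra to prove: it is already recorded in the displayed consequence of Watson's transformation \eqref{eq:polybigEq} quoted just before the theorem, so I would simply cite that.

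The main obstacle I anticipate is bookkeeping in the second step: the $A$-dependence is spread across four Pochhammer symbols $(aA,bA,cA,dA;q)_m$, the denominator $(A^2;q)_m$, the very-well-poised ratio $\dfrac{1-A^2q^{2j-1}}{1-A^2/q}\dfrac{(A^2/q;q)_j}{(A^2q^m;q)_j}$, and the explicit power $A^{-n+2s+2p}$, and getting these to cancel cleanly requires choosing the summation shifts so that the poised structure is preserved at each stage — a single misaligned $q$-power will obscure the $q$-Vandermonde pattern. A secondary subtlety is that the $s$-sum is over $0\le s\le n+1$ with the difference of binomials $\binom ns-\binom n{s-1}$ and the $p$-sum has an upper limit $n-2s-m$ that interacts with the $q$-binomial $\qbinom{n-2s-p}{m}$; since these combinatorial factors are \emph{identical} on both sides and carry no $A$, I would keep $(n,s,p)$ frozen and do the entire manipulation inside the $m$-sum, only at the very end observing that the two sides agree term-by-term in $(n,s,p)$. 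If the direct cancellation proves too unwieldy, the fallback is to prove $A$-independence abstractly: show $\partial/\partial A$ of the right-hand side (as a rational function of $A$) vanishes identically by checking it has no poles — the apparent poles at $(A^2;q)_m=0$ are removable by \eqref{eq:polybigEq} — and vanishes at $A=0$ and $A=\infty$, forcing it to be constant in $A$, then evaluate at a convenient value of $A$ (say $A=a$, or $A\to 0$ if the limit is finite) to identify the constant with the formula of Theorem~\ref{thm:AWWmom}.
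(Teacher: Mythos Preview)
Your approach is genuinely different from the paper's. The paper does not reduce Theorem~\ref{thm:8W7} to Theorem~\ref{thm:AWWmom} at all; it rederives the moment formula ab initio with the free parameter $A$ built in. One expands $x^n$ in the basis $(Ae^{i\theta},Ae^{-i\theta};q)_k$ by the $q$-Taylor theorem \cite{IS2003}, integrates termwise against the Askey--Wilson weight, and evaluates each resulting integral as a special case of the Nasrallah--Rahman integral \cite[(6.3.9)]{GR}. This produces an $A$-analogue of \eqref{AWWmomformula}, to which the same binomial and $q$-binomial manipulations used in the proof of Theorem~\ref{thm:AWWmom} are applied verbatim with $A$ in place of $a$. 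The $A$-independence is never argued as a separate identity---it is automatic, since the left side $2^n\mu_n$ never involved $A$. The paper's proof is three sentences of citation; yours would be a substantial series computation.

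What your route would buy, if it could be completed, is a purely algebraic demonstration that the $\ews$ factor is precisely what compensates for replacing $a$ by an arbitrary $A$ in the expansion basis---an identity interesting in its own right. But as written it is only a sketch: the central claim that after inserting \eqref{eq:bigEq} and shifting $m\mapsto m+j$ the inner sum is $q$-binomial-summable and the remaining $j$-sum collapses by $q$-Vandermonde or $q$-Saalsch\"utz is asserted, not verified. More worrying is your plan to freeze $(s,p)$ and match the two $m$-sums term-by-term in $(s,p)$. In the integral derivation the $(s,p)$-structure arises from the $q$-Taylor coefficients of $x^n$ in the $A$-basis, and those coefficients genuinely depend on $A$; so there is no a priori reason the $(s,p)$-slice of the Theorem~\ref{thm:8W7} sum should equal the $(s,p)$-slice of the Theorem~\ref{thm:AWWmom} sum---only the totals over $(m,s,p)$ must agree. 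If this slice-wise equality fails, your scheme has to be reorganized to treat all three sums jointly, and the summations you name may no longer apply. Your fallback (differentiate in $A$, chase poles, evaluate at a special $A$) is likewise only an outline.
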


\begin{proof} We again use the $q$-Taylor theorem and follow the proof of 
\cite[Proposition 3.1]{CSSW}.
We expand $x^n$ in terms of the basis $(Ae^{i\theta},Ae^{-i\theta};q)_k$. The 
resulting integral is a special case of the Nasrallah-Rahman integral 
\cite[(6.3.9), p. 158]{GR}. An analogue of \eqref{AWWmomformula} is obtained. 
As before, the same steps with the binomial and $q$-binomial theorems yield the 
stated result. 
\end{proof}

If $A=a$, then Theorem~\ref{thm:8W7} becomes Theorem~\ref{thm:AWWmom}.
 Theorem~\ref{thm:8W7} has one defect: not all of the powers of $q$ are positive
 due to the $q^{m(-n+2s+p)}$ term. The individual terms are Laurent polynomials in $q.$

If $A^2=q$, the $p$-sum in Theorem~\ref{thm:8W7} is evaluable by the 
$q$-Vandermonde sum \cite[II.6]{GR}.

\begin{cor}
\label{weird8W7} If $A^2=q$,
\begin{multline*}
2^n\mu_n(a,b,c,d;q) =\sum_{m=0}^n 
\frac{(aA,bA,cA,dA;q)_m}{(A^2,abcd;q)_m} (-q)^m \ews(m)
\sum_{s=0}^{n+1}  \left(\binom{n}{s} - \binom{n}{s-1}\right)\\
\times A^{-n+2s}
\qbinom{n+m+1-2s}{2m+1} 
q^{-nm + 2sm +\binom{m}{2}}.
 \end{multline*}
\end{cor}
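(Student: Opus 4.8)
The plan is to substitute $A^2=q$ directly into Theorem~\ref{thm:8W7} and recognize the inner $p$-sum as a terminating $q$-Chu--Vandermonde sum. First I would observe that when $A^2=q$ we have $A^{2p}=q^p$, so the factor $A^{-n+2s+2p}$ splits as $A^{-n+2s}\cdot q^{p}$ and the entire $A$-dependence, together with the $s$- and $n$-dependent part $q^{m(-n+2s)+\binom m2}$ of the exponent, pulls out of the $p$-sum. Writing $N:=n-2s$, what remains inside is
\[
\sum_{p=0}^{N-m} q^{p(m+1)}\qbinom{m+p}{m}\qbinom{N-p}{m},
\]
multiplied by the $p$-independent prefactor $A^{-n+2s}q^{m(-n+2s)+\binom m2}$.

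The heart of the proof is the $q$-binomial identity
\[
\sum_{p=0}^{N-m} q^{p(m+1)}\qbinom{m+p}{m}\qbinom{N-p}{m}=\qbinom{N+m+1}{2m+1},
\]
a $q$-analogue of the classical convolution $\sum_p\binom{m+p}{m}\binom{N-p}{m}=\binom{N+m+1}{2m+1}$. To prove it I would write $\qbinom{m+p}{m}=\frac{(q^{m+1};q)_p}{(q;q)_p}$ and $\qbinom{N-p}{m}=\qbinom{N}{m}\frac{(q^{N-m-p+1};q)_p}{(q^{N-p+1};q)_p}$, and then use the reversal relation $(q^{\alpha-p+1};q)_p=(-1)^p q^{p\alpha-\binom p2}(q^{-\alpha};q)_p$ to replace the two $p$-dependent products by $(q^{-(N-m)};q)_p$ and $(q^{-N};q)_p$. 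This recasts the sum as $\qbinom{N}{m}\,{}_2\phi_1(q^{m+1},q^{-(N-m)};q^{-N};q,q)$, a terminating ${}_2\phi_1$ of argument $q$, which is summed by the $q$-Chu--Vandermonde identity \cite[(II.6)]{GR} to $\qbinom{N}{m}\frac{(q^{-N-m-1};q)_{N-m}}{(q^{-N};q)_{N-m}}q^{(m+1)(N-m)}$. Converting the remaining $q$-shifted factorials with negative argument back to ordinary $q$-Pochhammer symbols (again via $1-q^{-j}=-q^{-j}(1-q^j)$) collapses the quotient to $q^{E}\qbinom{N+m+1}{2m+1}/\qbinom{N}{m}$ for an explicit exponent $E$, and a short computation shows $E+(m+1)(N-m)=0$, which gives the identity.

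Finally I would reassemble: substituting $\qbinom{N+m+1}{2m+1}=\qbinom{n+m+1-2s}{2m+1}$ back into the expression from the first step, and combining $m(-n+2s)+\binom m2=-nm+2sm+\binom m2$, produces exactly the stated formula uniformly in $s$. The step I expect to be the main obstacle is the second one --- not the invocation of $q$-Chu--Vandermonde itself, but the careful bookkeeping of powers of $q$ and of signs in passing to and from the reversed $q$-Pochhammer symbols, and checking that the $q$-prefactors accumulated from the evaluation and from the factorial manipulations cancel precisely against $q^{(m+1)(N-m)}$ so that the net power is exactly $q^{-nm+2sm+\binom m2}$. Everything else is routine.
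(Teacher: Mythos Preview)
Your proposal is correct and follows the same route as the paper: the paper states only that when $A^2=q$ the $p$-sum in Theorem~\ref{thm:8W7} is evaluable by the $q$-Vandermonde sum \cite[(II.6)]{GR}, and what you have written is precisely a detailed unpacking of that one-line remark. The convolution identity you isolate, $\sum_{p} q^{p(m+1)}\qbinom{m+p}{m}\qbinom{N-p}{m}=\qbinom{N+m+1}{2m+1}$, is exactly the evaluation the paper has in mind, and your reduction to a terminating ${}_2\phi_1$ at argument $q$ is the standard way to see it.
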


An explicit polynomial version of Theorem \ref{thm:AWWd=0}, which is unwieldy,  
may be given by expanding each of the eight factors in \eqref{eq:bigEq} by the 
$q$-binomial theorem. The resulting formula would appear to be too complex to be useful.

\section{Weighted Motzkin paths}
\label{sec:combinatorial-proof-}

In this section we give combinatorial proofs of Corollary~\ref{cor:AWWc=d=0} and
Corollary~\ref{cor:bb=q/a}, the explicit polynomial versions of the moments 
with 2 parameters. The combinatorial model we shall use is Motzkin paths, based 
on the three-term recurrence formula, 

A sketch of the proof is as follows.  We first interpret the moment $2^n
\mu_n(a,b,0,0;q)$ as a weighted sum of Motzkin paths. Then using Penaud's
decomposition \cite{Pen} we can decompose a weighted Motzkin path into a pair of
paths: a Dyck prefix and another weighted Moztkin path. The Dyck prefixes give a 
difference of binomial coefficients. We map the new weighted
Moztkin path to a new object: doubly striped skew shapes. These objects are a
generalization of striped skew shapes introduced by D. Kim \cite{Kim1997} in
order to prove the moment formula for Al-Salam-Carlitz polynomials. We then find
a sign-reversing involution on doubly striped skew shapes and show that the
fixed points have a weighted sum equal to a $q$-trinomial coefficient. Thus 
we have found the differences of binomial coeffcients and the $q$-trinomial which appear in 
Corollary~\ref{cor:AWWc=d=0}. For Corollary~\ref{cor:bb=q/a}, we find a further cancellation on the doubly striped skew shapes which leaves only one fixed point for given size.

A \emph{Motzkin path} is a lattice path in $\mathbb{N}\times\mathbb{N}$ from
$(0,0)$ to $(n,0)$ consisting of up steps $(1,1)$, down steps $(1,-1)$, and
horizontal steps $(1,0)$. We say that the \emph{level} of a step is $i$ if it is
an up step or a down step between the lines $y=i-1$ and $y=i$, or it is a
horizontal step on the line $y=i$.  A \emph{weighted Motzkin path} is a Motzkin
path in which each step has a certain weight. The \emph{weight} $\wt(p)$ of a
weighted Motzkin path $p$ is the product of the weights of all steps.  Note that
the level of an up step or a down step is at least $1$ and the level of a
horizontal step may be $0$.
 
Let $\Mot_n(a,b)$ denote the set of weighted Motzkin paths of length $n$ such that
\begin{itemize}
\item the weight of an up step of level $i$ is either $q^i$ or $-1$,
\item the weight of a down step of level $i$ is either $abq^{i-1}$ or $-1$,
\item the weight of a horizontal step of level $i$ is either $aq^i$ or $bq^{i}$.
\end{itemize}
Then by Viennot's theory \cite{V} we have
\begin{equation}
  \label{eq:1}
2^n \mu_n(a,b,0,0;q)= \sum_{P\in \Mot_n(a,b)} \wt(P). 
\end{equation}

We define $\Mot_n^*(a,b)$ to be the set of weighted Motzkin paths in
$\Mot_n(a,b)$ such that there is no peak of weight $1$, that is, an up step of
weight $-1$ immediately followed by a down step of weight $-1$.

By the same idea that is a variation of Penaud's decomposition as in
\cite[Proposition~5.1]{JV2010}, we have
\begin{equation}
  \label{eq:2}
\sum_{P\in \Mot_n(a,b)} \wt(P) = \sum_{k=0}^{n} 
\left(\binom{n}{\frac{n-k}2}-\binom{n}{\frac{n-k}2-1}\right)
\sum_{P\in \Mot_k^*(a,b)} \wt(P).
\end{equation}

By \eqref{eq:1} and \eqref{eq:2}, Corollary~\ref{cor:AWWc=d=0}
and Corollary~\ref{cor:bb=q/a} can be restated as follows.

\begin{thm}\label{thm:mot}
We have
  \begin{align*}
  \sum_{P\in \Mot_k^*(a,b)} \wt(P) &= 
\sum_{u+v+2t=k} a^u b^v (-1)^{t} q^{\binom{t+1}2} \qbinom{u+v+t}{u,v,t},\\
\sum_{P\in \Mot_k^*(a,q/a)} \wt(P) &=  (q/a)^k \sum_{i=0}^k a^{2i}
q^{i(k-i-1)}. 
  \end{align*}
\end{thm}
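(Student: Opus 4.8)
The plan is to prove Theorem~\ref{thm:mot} by a direct combinatorial analysis of the sets $\Mot_k^*(a,b)$, following the roadmap the authors lay out in the section introduction: convert each weighted Motzkin path into a ``doubly striped skew shape'', build a sign-reversing involution on these objects, identify the fixed points, and compute their generating function.

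First I would set up the bijection. A path $P\in\Mot_k^*(a,b)$ consists of steps each of which carries one of two possible weights; the ``non-cancelling'' choices ($q^i$ on an up step, $abq^{i-1}$ on a down step, $aq^i$ or $bq^i$ on a horizontal step) versus the ``$-1$'' choices on up and down steps. I would first record the combinatorial data of $P$: the sequence of step types (up/down/horizontal) is a Motzkin path, and then at each up or down step we have a binary decision. I would encode the positions where the ``$-1$'' choice is made, together with the heights at which they occur, as a pair of column-strict fillings of a skew Young diagram — this is D.~Kim's striped skew shapes \cite{Kim1997} in the one-sided case, doubled here because both up-steps and down-steps can be striped. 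The area statistic of the skew shape should recover the $q$-power $\sum i$ coming from the level weights, and the number of stripes should track the number of $-1$'s, giving the sign. The condition ``no peak of weight $1$'' (an up-$(-1)$ immediately followed by a down-$(-1)$) is exactly what makes the encoding well-defined and the involution below well-behaved near the top of the path.

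Next I would construct the sign-reversing involution $\varphi$ on doubly striped skew shapes. The idea, generalizing \cite{Kim1997}, is to look for the first place (reading in some fixed order, say along the paths of the skew shape or along the Motzkin path) where one can add or remove a ``box'' of a stripe, toggling between a configuration with $t$ stripes and one with $t\pm 1$; this flips the sign $(-1)^t$ while preserving the monomial $a^ub^v$ and adjusting the $q$-weight in a controlled way that the $q^{\binom{t+1}{2}}$ prefactor absorbs. The objects with no legal move are the fixed points, and I would argue that these are precisely configurations with no stripes at all, i.e. weighted Motzkin paths using only the ``main'' weights $q^i$, $abq^{i-1}$, $aq^i$, $bq^i$ — equivalently Łukasiewicz-type paths — whose generating function is a $q$-trinomial coefficient $\qbinom{u+v+t}{u,v,t}$ after grouping by the exponents $u$ of $a$, $v$ of $b$, and the number $t$ of up-steps. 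This yields the first identity. For the second identity, specialize $b=q/a$: then $ab = q$, the down-step weight becomes $q^i$ and matches the up-step weight structure, and I would exhibit a further cancellation (a second, simpler involution pairing an up step with a down step at the same level) that collapses everything to a single fixed point for each value of $k-2i = $ (number of horizontal steps contributing), giving $(q/a)^k\sum_{i=0}^k a^{2i}q^{i(k-i-1)}$.

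The main obstacle I expect is pinning down the involution $\varphi$ precisely and verifying it is well-defined, sign-reversing, weight-preserving (up to the explicit $q^{\binom{t+1}{2}}$ normalization), and an involution — in particular checking that the ``first legal move'' is unambiguous and that adding a box and removing a box are genuinely inverse operations near the boundary cases, which is exactly where the $\Mot_k^*$ restriction (no weight-$1$ peak) is needed. The bookkeeping that matches the skew-shape area statistic to the product of level weights $q^i$ over all steps, simultaneously for striped up-steps and striped down-steps, is the delicate part; once that is correct, the fixed-point count as a $q$-trinomial coefficient and the $b=q/a$ specialization should follow from standard $q$-binomial manipulations (indeed the $b=q/a$ case already has the $q$-Vandermonde evaluation recorded in the proof of Corollary~\ref{cor:bb=q/a}, which serves as a consistency check).
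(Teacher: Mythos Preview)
Your overall architecture (bijection to doubly striped skew shapes, then a sign-reversing involution) matches the paper, but you have misidentified the fixed points, and this is a genuine gap. You claim the fixed points of the involution are ``configurations with no stripes at all'', and that the $q$-trinomial then arises from grouping stripe-free paths by exponents and number of up-steps. But if the fixed points carried no sign, the right-hand side would be a single positive polynomial, not the signed sum $\sum_{u+v+2t=k}(-1)^t q^{\binom{t+1}{2}}\qbinom{u+v+t}{u,v,t}$. In the paper's argument the involution (an extension of D.~Kim's) only kills the \emph{black} stripes and the inner partition $\mu$; the fixed points are elements of $\DSS(u+t,v+t;-,\emptyset,-,\emptyset)$ with exactly $t$ \emph{white} stripes remaining. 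The factor $(-1)^t q^{\binom{t+1}{2}}$ is precisely the residual weight of those $t$ white stripes, and the $q$-trinomial $\qbinom{u+v+t}{u,v,t}$ comes from a further bijection sending such a fixed point to a word in $S(0^u,1^t,2^v)$, with the area statistic becoming $\inv$ plus $\binom{t}{2}$. Your description would need to be rewritten to keep the white stripes in the fixed-point set and to supply this second bijection.

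For the $b=q/a$ identity your ``second, simpler involution pairing an up step with a down step at the same level'' is not the mechanism the paper uses and, as stated, it is unclear how it would leave exactly one fixed point per $i$. The paper's key observation is that the $180^\circ$ rotation $S\mapsto S'$ on a doubly striped skew shape swaps white and black stripes and complements $\lambda,\mu$; in general $\wt_{a,b}(S)=\wt_{a,b}(S')(q/ab)^{|W|-|B|}$, so precisely when $ab=q$ the weight is rotation-invariant. One then applies the extended Kim involution once to kill black stripes and $\mu$, rotates to convert the surviving white stripes back into black stripes, and applies Kim's original involution again; the only survivor in $\DSS(i,k-i)$ is the full rectangle $((k-i)^i,\emptyset,\emptyset,\emptyset)$, whose weight is $a^i(q/a)^{k-i}q^{i(k-i)}$. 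This rotation trick is the missing idea in your plan for the second identity.
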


We note that the second identity in Theorem~\ref{thm:mot} is equivalent to
Proposition~5 in \cite{CJVPR} which is used to prove the formula
\eqref{eq:corteel} for the moments of $q$-Laguerre polynomials.  Corteel et
al.'s proof of \cite{CJVPR} is combinatorial except for the proof of
Proposition~5 in \cite{CJVPR}.  In this section we prove the above theorem
combinatorially, thus providing the first combinatorial proof of
\eqref{eq:corteel}.

In order to give combinatorial proofs of the two formulas in
Theorem~\ref{thm:mot} we introduce doubly striped skew shapes. These objects are
a generalization of striped skew shapes introduced by D.~Kim \cite{Kim1997}.

A \emph{doubly striped skew shape} of size $m\times n$ is a quadruple
$(\lambda,\mu,W,B)$ of partitions $\mu\subset \lambda \subset (n^m)$ and a set
$W$ of white stripes and a set $B$ of black stripes with $W\cap
B=\emptyset$. Here, a \emph{white stripe} is a diagonal set $S$ of $\lm$ such
that $\lm$ contains neither the cell to the left of the leftmost cells of $S$
nor the cell below the rightmost cell of $S$, where a diagonal set means a set
of cells in row $r+i$ and column $s+i$ for $i=1,2,\dots,p$ for some integers
$r,s,p$. Similarly, a \emph{black stripe} is a diagonal set $S$ of $\lm$ such
that $\lm$ contains neither the cell above the leftmost cell of $S$ and the cell
to the right of the rightmost cell $S$. We will call a cell in a white stripe
(resp.~black stripe) a \emph{white dot} (resp.~\emph{black dot}).

 Let $\DSS(m,n)$ denote the
set of doubly striped skew shapes of size $m\times n$. We define the
\emph{weight} of $(\lambda,\mu,W,B)\in \DSS(m,n)$ to be
\begin{equation}
  \label{eq:wt}
\wt_{a,b}(\lambda,\mu,W,B) = a^m b^n (-1)^{|W|+|B|} 
q^{|\lm|-\norm{W}-\norm{B}} (q/ab)^{|W|},
\end{equation}
where $\norm{W}$ and $\norm{B}$ are the total numbers of white dots and black
dots respectively.

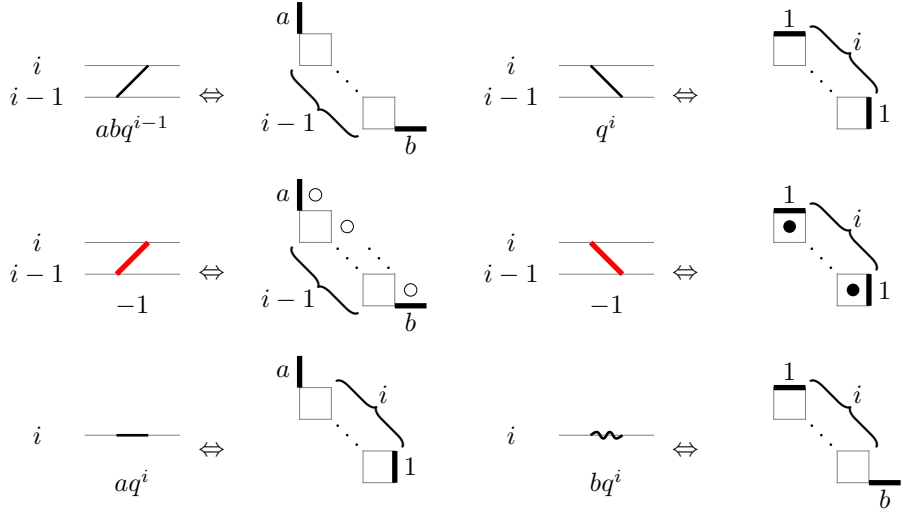
\begin{figure}
  \centering
\begin{pspicture}(-2.5,-2)(3,2)
\psline[linecolor=gray](0,1)(3,1)
\psline[linecolor=gray](0,0)(3,0)
\psset{linewidth=1pt}
\UP(1,0) 
\rput(-1.5,0){$i-1$}
\rput(-1.5,1){$i$}
\rput(1.5,-1){$abq^{i-1}$}
\end{pspicture}%
\begin{pspicture}(0,0)(2,4)
\rput(1,2){$\Leftrightarrow$}
\end{pspicture}
\begin{pspicture}(-.5,0)(5,5)
 \psgrid(1,3)(2,4) \psgrid(3,1)(4,2)
\psset{linewidth=2pt,dotsize=1pt}
\rput(-1.5,1.5){\psdot(4,1) \psdot(3.7,1.3) \psdot(4.3,.7)}
\psline(1,5)(1,4) \psline(5,1)(4,1)
\rput{-45}(1.7,1.7){\psscaleboxto(3,.5){\rotateleft{\{}}}
\rput[tr](1.5,1.5){$i-1$}
\rput[r](0.7,4.5){$a$}
\rput(4.5,.5){$b$}
\end{pspicture}\qquad
\begin{pspicture}(-2.5,-2)(3,2)
\psline[linecolor=gray](0,1)(3,1)
\psline[linecolor=gray](0,0)(3,0)
\psset{linewidth=1pt}
\DW(1,1) 
\rput(-1.5,0){$i-1$}
\rput(-1.5,1){$i$}
\rput(1.5,-1){$q^{i}$}
\end{pspicture}%
\begin{pspicture}(0,0)(2,4)
\rput(1,2){$\Leftrightarrow$}
\end{pspicture}
\begin{pspicture}(-.5,0)(5,5)
 \psgrid(1,3)(2,4) \psgrid(3,1)(4,2)
\psset{linewidth=2pt,dotsize=1pt}
\rput(-1.5,1.5){\psdot(4,1) \psdot(3.7,1.3) \psdot(4.3,.7)}
\psline(2,4)(1,4) \psline(4,2)(4,1)
\rput{-45}(3.3,3.3){\psscaleboxto(3,.5){\rotateright{\{}}}
\rput[bl](3.5,3.5){$i$}
\rput(1.5,4.5){$1$}
\rput(4.5,1.5){$1$}
\end{pspicture}

\begin{pspicture}(-2.5,-2)(3,2)
\psline[linecolor=gray](0,1)(3,1)
\psline[linecolor=gray](0,0)(3,0)
\psset{linewidth=1pt}
\MUP(1,0) 
\rput(-1.5,0){$i-1$}
\rput(-1.5,1){$i$}
\rput(1.5,-1){$-1$}
\end{pspicture}%
\begin{pspicture}(0,0)(2,4)
\rput(1,2){$\Leftrightarrow$}
\end{pspicture}
\begin{pspicture}(-.5,0)(5,5.5)
\rput(2,4){\wcirc(0,0)} \rput(3,3){\wcirc(0,0)} \rput(5,1){\wcirc(0,0)}
 \psgrid(1,3)(2,4) \psgrid(3,1)(4,2)
\psset{linewidth=2pt,dotsize=1pt}
\rput(-1.5,1.5){\psdot(4,1) \psdot(3.7,1.3) \psdot(4.3,.7)}
\rput(-.5,1.5){\psdot(4,1) \psdot(3.7,1.3) \psdot(4.3,.7)}
\psline(1,5)(1,4) \psline(5,1)(4,1)
\rput{-45}(1.7,1.7){\psscaleboxto(3,.5){\rotateleft{\{}}}
\rput[tr](1.5,1.5){$i-1$}
\rput[r](0.7,4.5){$a$}
\rput(4.5,.5){$b$}
\end{pspicture}\qquad
\begin{pspicture}(-2.5,-2)(3,2)
\psline[linecolor=gray](0,1)(3,1)
\psline[linecolor=gray](0,0)(3,0)
\psset{linewidth=1pt}
\MDW(1,1) 
\rput(-1.5,0){$i-1$}
\rput(-1.5,1){$i$}
\rput(1.5,-1){$-1$}
\end{pspicture}%
\begin{pspicture}(0,0)(2,4)
\rput(1,2){$\Leftrightarrow$}
\end{pspicture}
\begin{pspicture}(-.5,0)(5,5.5)
\rput(2,3){\bcirc(0,0)} \rput(4,1){\bcirc(0,0)}
 \psgrid(1,3)(2,4) \psgrid(3,1)(4,2)
\psset{linewidth=2pt,dotsize=1pt}
\rput(-1.5,1.5){\psdot(4,1) \psdot(3.7,1.3) \psdot(4.3,.7)}
\psline(2,4)(1,4) \psline(4,2)(4,1)
\rput{-45}(3.3,3.3){\psscaleboxto(3,.5){\rotateright{\{}}}
\rput[bl](3.5,3.5){$i$}
\rput(1.5,4.5){$1$}
\rput(4.5,1.5){$1$}
\end{pspicture}

\begin{pspicture}(-2.5,-2)(3,2)
\psline[linecolor=gray](0,.5)(3,.5)
\HS(1,.5) 
\rput(-1.5,.5){$i$}
\rput(1.5,-1){$aq^{i}$}
\end{pspicture}%
\begin{pspicture}(0,0)(2,4)
\rput(1,2){$\Leftrightarrow$}
\end{pspicture}
\begin{pspicture}(-.5,0)(5,5.)
 \psgrid(1,3)(2,4) \psgrid(3,1)(4,2)
\psset{linewidth=2pt,dotsize=1pt}
\rput(-1.5,1.5){\psdot(4,1) \psdot(3.7,1.3) \psdot(4.3,.7)}
\psline(1,4)(1,5) \psline(4,2)(4,1)
\rput{-45}(3.3,3.3){\psscaleboxto(3,.5){\rotateright{\{}}}
\rput[bl](3.5,3.5){$i$}
\rput[r](.7,4.5){$a$}
\rput(4.5,1.5){$1$}
\end{pspicture}\qquad
\begin{pspicture}(-2.5,-2)(3,2)
\psline[linecolor=gray](0,.5)(3,.5)
\HSC(1,.5)
\rput(-1.5,.5){$i$}
\rput(1.5,-1){$bq^{i}$}
\end{pspicture}%
\begin{pspicture}(0,0)(2,4)
\rput(1,2){$\Leftrightarrow$}
\end{pspicture}
\begin{pspicture}(-.5,0)(5,5.5)
 \psgrid(1,3)(2,4) \psgrid(3,1)(4,2)
\psset{linewidth=2pt,dotsize=1pt}
\rput(-1.5,1.5){\psdot(4,1) \psdot(3.7,1.3) \psdot(4.3,.7)}
\psline(2,4)(1,4) \psline(5,1)(4,1)
\rput{-45}(3.3,3.3){\psscaleboxto(3,.5){\rotateright{\{}}}
\rput[bl](3.5,3.5){$i$}
\rput(1.5,4.5){$1$}
\rput(4.5,.5){$b$}
\end{pspicture}
 \caption{Converting a weighted Motzkin path to a doubly striped skew shape.}
  \label{fig:Mot_to_Dyck}
\end{figure}

\begin{figure}
  \centering
\begin{pspicture}(0,-1)(11,3)
  \psgrid (0,0)(11,3)
  \MUP(0,0) \UP(1,1) \UP(2,2) \MDW(3,3) \HSC(4,2) \MUP(5,2) \HS(6,3) 
  \MDW(7,3) \MDW(8,2) \HSC(9,1) \DW(10,1)
\end{pspicture}
\begin{pspicture}(-4,0)(6,-5)
\rput(-2,-2.5){$\Leftrightarrow$}
 \gcell(1,1)[] \gcell(1,2)[] \gcell(2,1)[] \gcell(2,2)[]
 \gcell(2,6)[] \gcell(3,6)[] \gcell(4,6)[] \gcell(5,6)[]  \gcell(5,4)[] \gcell(5,5)[] 
  \bcirc(1,4) \bcirc(2,5)
  \bcirc(3,1) \bcirc(4,2) \bcirc(5,3) 
  \wcirc(5,1) \wcirc(2,3) \wcirc(3,4) \wcirc(4,5) 
\bcirc(1,3) \bcirc(2,4) \bcirc(3,5) 
  \psgrid (0,0)(6,-5)
\end{pspicture}
\caption{An example of the bijection $\rho$. The up steps of weight $y$ and the
  down steps of weight $1$ are colored red.}
  \label{fig:dsss}
\end{figure}
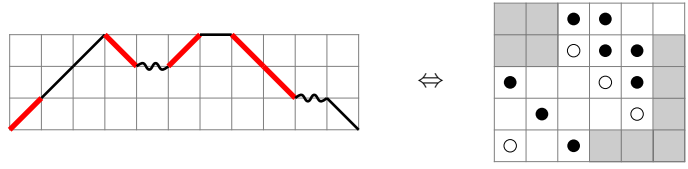

There is a simple correspondence between $\Mot_k^*(a,b)$ and doubly striped skew
shapes.

\begin{lem}\label{lem:XDSS}
We have
\[
\sum_{P\in \Mot_k^*(a,b)} \wt(P) = \sum_{i=0}^k \sum_{S\in \DSS(i,k-i)}
\wt_{a,b}(S).
\]
\end{lem}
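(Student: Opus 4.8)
The plan is to construct an explicit bijection $\rho$ between $\Mot_k^*(a,b)$ and the disjoint union $\bigsqcup_{i=0}^k \DSS(i,k-i)$, matching weights up to the obvious identification, exactly as suggested by Figures~\ref{fig:Mot_to_Dyck} and~\ref{fig:dsss}. The key observation is that a path $P\in\Mot_k^*(a,b)$ has some number $i$ of up steps (equivalently $i$ down steps), so $k-i$ horizontal steps; the image should land in $\DSS(i,k-i)$, i.e.\ a skew shape inside an $i\times(k-i)$ rectangle. First I would read off from the path the underlying skew shape $\lm$: traverse the steps of $P$ and record, step by step, a lattice path on the boundary of the rectangle $(i^{k-i})$ (or its transpose), where up/down steps of $P$ contribute a unit vertical segment and horizontal steps contribute a unit horizontal segment, the ``height'' of the path determining which of $\lambda$ or $\mu$ the corner belongs to. Concretely, each step of $P$ at level $\ell$ corresponds to a cell (or empty cell) on an appropriate antidiagonal of $\lm$, with the level $\ell$ controlling the antidiagonal index; this is the content of the local rules depicted in Figure~\ref{fig:Mot_to_Dyck}.

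Next I would specify where the white and black stripes go. An up step of weight $-1$ (the red up steps) at level $\ell$ becomes the left end of a white stripe, and a down step of weight $-1$ (red down steps) at level $\ell$ becomes the right end of a black stripe; because $P$ has no peak of weight $1$, a red up step is never immediately followed by a red down step, which is precisely the condition guaranteeing that the stripes we build are genuine white/black stripes (the forbidden configuration would force a white and black dot to collide, violating $W\cap B=\emptyset$ or the stripe-extension conditions). The remaining steps — up steps of weight $q^\ell$, down steps of weight $abq^{\ell-1}$, horizontal steps of weight $aq^\ell$ or $bq^\ell$ — are the ``unmarked'' steps and simply build cells of the skew shape carrying the appropriate $q$-power and $a$- or $b$-power. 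One then checks the inverse map: given $(\lambda,\mu,W,B)\in\DSS(i,k-i)$, read the boundary antidiagonals in order, turn each into a step whose type (up/down/horizontal) and level are dictated by the geometry, and whose weight is $-1$ precisely when the cell lies in a stripe, marking left ends of white stripes as red up steps and right ends of black stripes as red down steps.

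The weight bookkeeping is then the heart of the matter. Under $\rho$, the number of up steps equals $i=m$ (the number of rows of the target) and the number of horizontal steps equals $k-i=n$, so the factor $a^m b^n$ in \eqref{eq:wt} must absorb the $a$'s and $b$'s coming from down steps and horizontal steps; I would verify that a down step of weight $abq^{\ell-1}$ contributes one $a$ and one $b$ (pairing with an earlier up step to balance the count), a horizontal step of weight $aq^\ell$ contributes one $a$, and $bq^\ell$ contributes one $b$, so that the total $a$-exponent is $m$ and $b$-exponent is $n$ once one accounts for the $(q/ab)^{|W|}$ correction — each white stripe, built from a red up step, removes the $a$'s and $b$'s that a non-red up-to-down passage would have carried, which is exactly the $(q/ab)^{|W|}$ factor. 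The $q$-exponent $|\lm|-\norm W-\norm B$ should match $\sum_\ell(\text{level contributions})$: each unmarked cell at antidiagonal index contributes its natural $q$-power and each white or black dot contributes none, which is the meaning of subtracting $\norm W+\norm B$ from $|\lm|$. Finally $(-1)^{|W|+|B|}$ matches the product of the $-1$ weights on red steps. I expect the main obstacle to be precisely this weight verification — pinning down the exact antidiagonal-to-level dictionary so that the $q$-powers on cells sum correctly, and checking that the $(q/ab)^{|W|}$ twist is exactly what is needed — together with a careful proof that the no-weight-$1$-peak condition is equivalent to the $W,B$ disjointness-and-extension conditions; everything else is a routine, if lengthy, case analysis over the five step types in Figure~\ref{fig:Mot_to_Dyck}.
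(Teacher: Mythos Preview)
Your plan has the right spirit---a weight-preserving bijection to doubly striped skew shapes---but the concrete description is off in a way that would derail the construction.

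First, the counting: if $P$ has $i$ up steps and $i$ down steps, it has $k-2i$ horizontal steps, not $k-i$. More importantly, the index $m$ in $\DSS(m,k-m)$ is \emph{not} the number of up steps. In the paper's bijection one builds \emph{two} lattice paths simultaneously, an upper one tracing $\lambda$ and a lower one tracing $\mu$: an up step (of either weight) adds a north step to the upper path and an east step to the lower path; a down step adds an east step to the upper and a north step to the lower; a horizontal step of weight $aq^\ell$ adds a north step to both; a horizontal step of weight $bq^\ell$ adds an east step to both. Thus $m$ equals the number of up steps \emph{plus} the number of $a$-type horizontal steps, and $n$ equals the number of down steps plus the number of $b$-type horizontal steps. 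Your single-path picture (``up/down steps contribute a vertical segment, horizontal steps a horizontal segment'') cannot produce both $\lambda$ and $\mu$, and gets the target rectangle wrong.

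Second, the stripes: a red up step does not merely mark the ``left end'' of a white stripe; it creates the entire white stripe, namely the full diagonal of cells lying between the north step just added to the upper path and the east step just added to the lower path (that diagonal has length equal to the current level). Likewise a red down step creates an entire black stripe along the corresponding diagonal. The no-peak-of-weight-$1$ condition is exactly what guarantees these diagonals satisfy the boundary constraints in the definition of white/black stripe and that $W\cap B=\emptyset$. Once you set up the two-path picture correctly, the weight check becomes transparent: $a^m b^n$ comes from the north steps of the upper path and east steps of the lower path; $q^{|\lambda/\mu|}$ is the total area between the two paths, which equals $\sum(\text{levels})$; each red step replaces the level's $q$-power and, for a red up step, also the $ab$ factor, by $-1$---accounting precisely for $(-1)^{|W|+|B|}q^{-\|W\|-\|B\|}(q/ab)^{|W|}$.
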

\begin{proof}
  It is enough to show that there is a weight-preserving bijection $\rho:
  \Mot_k^*(a,b) \to \bigcup_{i=0}^k \DSS(i,k-i)$.  Let $P\in \Mot_k^*(a,b)$. We
  will construct an upper path and a lower path which determine two partitions
  $\lambda$ and $\mu$ respectively. The upper path and the lower path start at
  the origin. For each step of $P$, we add one step to the two lattice paths as
  follows. If the step is an up step of weight $abq^{i-1}$, we add a north step
  of weight $a$ to the upper path and an east step of weight $b$ to the lower
  path. If the step is an up step of weight $-1$, we add a north step of weight
  $a$ to the upper path and an east step of weight $b$ to the lower path, and we
  make a white stripe between these two steps, see
  Figure~\ref{fig:Mot_to_Dyck}. Similarly we add one step to the upper and lower
  paths for other type of step in $P$ as shown in
  Figure~\ref{fig:Mot_to_Dyck}. Then we define $\rho(P)$ to be the resulting
  diagram.  See Figure~\ref{fig:dsss} for an example of $\rho$. It is easy to
  see that $\rho$ is a weight-preserving bijection.
\end{proof}

By Lemma~\ref{lem:XDSS}, Theorem~\ref{thm:mot} is equivalent to the following
proposition.

\begin{prop}\label{prop:dss}
For $k\geq0$, we have
\begin{align}
  \label{eq:dss1}
\sum_{i=0}^k \sum_{S\in \DSS(i,k-i)} \wt_{a,b}(S) &=
\sum_{u+v+2t=k} a^u b^v (-1)^{t} q^{\binom{t+1}2} \qbinom{u+v+t}{u,v,t},\\
\label{eq:dss2}
\sum_{i=0}^k \sum_{S\in \DSS(i,k-i)} \wt_{a,q/a}(S) &=
(q/a)^k \sum_{i=0}^k a^{2i}q^{i(k-i-1)}. 
\end{align}
\end{prop}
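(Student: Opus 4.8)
The plan is to prove Proposition~\ref{prop:dss} by constructing a sign-reversing involution on $\bigcup_{i=0}^k \DSS(i,k-i)$ and identifying the surviving fixed points. First I would set up the involution following D.~Kim's idea for striped skew shapes \cite{Kim1997}: scan the cells of $\lm$ along diagonals in a fixed order, and look for the first place where one can either toggle a cell in or out of $\lm$, or add or remove a dot, in a way that changes the sign $(-1)^{|W|+|B|}$ while preserving $a^mb^n q^{|\lm|-\norm W-\norm B}(q/ab)^{|W|}$. The white-stripe/black-stripe conditions (no cell to the left of the leftmost white dot, no cell below the rightmost white dot, and the transposed conditions for black) are exactly what make such a local toggle well defined — a stripe can always be "grown" or "shrunk" at one of its ends, or a boundary cell of $\lm$ with no dot can be removed. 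The main bookkeeping is to order these moves so that the map is an involution: the standard trick is to give each potential move a priority (e.g. by the position of the cell and the type of move) and always perform the highest-priority one.

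After the involution is in place, I would describe the fixed points. A fixed point should be a diagram in which every cell of $\lm$ carries a dot and no stripe can be shortened or lengthened — forcing $\lm$ itself to be a union of maximal diagonal stripes of a very rigid shape. Concretely I expect the fixed points to be indexed by a triple $(u,v,t)$ with $u+v+2t=k$, where $\mu=\emptyset$, $\lambda$ is built from $u$ columns, $v$ rows, and $t$ "doubled" diagonal steps, with $t$ black stripes of total weight $(-1)^t$ and white stripes contributing the $q$-powers; summing $\wt_{a,b}$ over the internal choices of where the stripes sit gives the $q$-trinomial coefficient $\qbinom{u+v+t}{u,v,t}$ and the factor $q^{\binom{t+1}{2}}$. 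Matching these against the right-hand side of \eqref{eq:dss1} term by term finishes the first identity.

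For \eqref{eq:dss2} I would specialize $b=q/a$, so that the weight \eqref{eq:wt} becomes $a^{m-n}q^n(-1)^{|W|+|B|}q^{|\lm|-\norm W-\norm B}q^{|W|-\norm W}\cdot$(something) — the key point is that with $ab=q$ the factor $(q/ab)^{|W|}=1$, so white dots and black dots are weighted almost identically, and there should be a further pairing that cancels most fixed points of the first involution against each other. I would look for an additional sign-reversing involution on those fixed points (for instance swapping a white stripe at the bottom for a black stripe at the right, or using the Gaussian-sum identity \eqref{eq:Gaussian_formula}-style cancellation that already appeared in the proof of Corollary~\ref{cor:bb=-a}) whose only survivors are the $k+1$ diagrams corresponding to the terms $a^{2i}q^{i(k-i-1)}$, $0\le i\le k$, after the overall $(q/a)^k$ is factored out. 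Equivalently, one can just take \eqref{eq:dss1} with $b=q/a$ and collapse the $q$-trinomial sum using the $q$-Vandermonde identity \cite[II.6]{GR}, exactly as in the proof of Corollary~\ref{cor:bb=q/a}; but a purely bijective collapse on the $\DSS$ side is more in the spirit of this section and is what I would aim for.

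The hard part will be making the first involution genuinely well defined and checking it is an involution, because the white and black stripe conditions interact: moving a cell of $\lm$ can simultaneously destroy the legality of a white stripe and enable a black one, so the priority ordering has to be chosen carefully enough that "apply the move, then apply the move again" returns the original diagram. I expect the cleanest route is to treat the two "roads" (the upper boundary path of $\lambda$ with its black stripes, and the lower boundary path of $\mu$ with... ) — more precisely, to process diagonals from the outer corner inward and, on each diagonal, resolve white-stripe ambiguities before black-stripe ones — and to verify the involution property by a short case analysis on which of the finitely many move types fires. Everything else (the weight computations, the identification of fixed points with trinomial coefficients, and the $b=q/a$ collapse) is routine once the involution is nailed down.
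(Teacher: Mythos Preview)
Your proposal has the right flavor but several concrete mis-identifications that would derail the argument.

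First, the fixed points of the involution are not what you describe. In the paper the involution $\psi$ (Lemma~\ref{lem:general_dkim}) acts on the pair $(\mu,B)$ only: it is D.~Kim's map $\phi$ applied to $(\mu,B)$ with white stripes ignored, followed by a slide rule that repairs any white stripe overlapped by the new cells. The fixed points are therefore the elements of $\DSS(m,n;-,\emptyset,-,\emptyset)$: $\mu=\emptyset$, \emph{no} black stripes, $t$ \emph{white} stripes, and $\lambda$ an \emph{arbitrary} partition inside $((v+t)^{u+t})$. The sign $(-1)^t$ comes from $|W|=t$, not from black stripes, and $\lambda$ is not rigid at all --- the $q$-trinomial arises from a further bijection sending $(\lambda,\emptyset,W,\emptyset)$ to a word in $S(0^u,1^t,2^v)$ with $q^{|\lambda|-\norm W}=q^{\binom t2+\inv(w)}$. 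Your picture of ``$\lambda$ built from $u$ columns, $v$ rows, and $t$ doubled diagonal steps, with $t$ black stripes'' is essentially the transposed/rotated object and will not match the weight $a^ub^v(-1)^tq^{\binom{t+1}2}$ correctly.

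Second, the involution itself is not a diagonal scan with a priority order. Kim's map $\phi$ compares the lowest row $r$ of $\mu$ with the lowest row $s$ carrying a black dot and either slides the bottom black stripe left into $\mu$ or peels cells off $\mu$ to create a black stripe; the extension $\psi$ just slides any disturbed white stripe one step right (possibly deleting its last dot and the corresponding cell of $\lambda$). This two-stage description is what makes the involution property checkable; your ``scan diagonals and toggle the first legal move'' scheme would have to reproduce exactly this behavior, and the case analysis you anticipate is precisely where the white/black interaction you worry about bites.

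Third, for \eqref{eq:dss2} you are missing the key device: the $180^\circ$ rotation $S\mapsto S'$ that swaps $(\lambda,\mu)$ with their complements and exchanges white and black stripes. When $b=q/a$ the factor $(q/ab)^{|W|}$ equals $1$, so $\wt_{a,q/a}(S)=\wt_{a,q/a}(S')$. One then applies $\psi$, rotates, and applies Kim's original $\phi$ (Lemma~\ref{lem:dkim}), leaving the single element $((k-i)^i,\emptyset,\emptyset,\emptyset)$ in each $\DSS(i,k-i)$ with weight $a^i(q/a)^{k-i}q^{i(k-i)}$. Neither of your two suggested routes (an ad hoc white/black swap at the boundary, or the $q$-Vandermonde collapse of \eqref{eq:dss1}) is this argument; the second would work algebraically but is exactly what the paper avoids in favor of a bijective proof.
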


In order to prove Proposition~\ref{prop:dss} we need D. Kim's sign-reversing
involution. We now recall his involution. By a sign-reversing involution on
$\DSS(m,n)$ we mean an involution $f:\DSS(m,n)\to \DSS(m,n)$ such that if $S\in
\DSS(m,n)$ is not a fixed point, i.e. $f(S)\ne S$, then $\wt(f(S)) = -\wt(S)$.

Let $\DSS(m,n;\alpha,\beta,\gamma,\delta)$ denote the set of $(\lambda,\mu,W,B)
\in \DSS(m,n)$ such that $\lambda=\alpha$, $\mu=\beta$, $W=\gamma$, and
$B=\delta$, where $\alpha$ can be ``$-$'' and in this case there is no
restriction on $\lambda$, and it is similar for $\beta$, $\gamma$, and
$\delta$. For example, $\DSS(m,n; \lambda, -, \emptyset, -)$ is the set of
$(\lambda,\mu, \emptyset,B) \in \DSS(m,n)$ for all possible $\mu$ and $B$.

The following is proved by D. Kim \cite[Theorem~3.2]{Kim1997}. Since we will use
his proof we include it here.

\begin{lem}\label{lem:dkim}
For a partition $\lambda\subset (n^m)$, we have
\[
\sum_{S\in \DSS(m,n;\lambda,-,\emptyset,-)} \wt_{a,b}(S) = \sum_{S\in
  \DSS(m,n;\lambda,\emptyset,\emptyset, \emptyset)} \wt_{a,b}(S) = a^m b^n
q^{|\lambda|}.
\]  
\end{lem}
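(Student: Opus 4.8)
The plan is to prove Lemma~\ref{lem:dkim} by constructing an explicit sign-reversing involution on the relevant set of doubly striped skew shapes, following D. Kim's argument. Fix the partition $\lambda \subset (n^m)$. The first equality should be immediate: $\DSS(m,n;\lambda,\emptyset,\emptyset,\emptyset)$ is the subset of $\DSS(m,n;\lambda,-,\emptyset,-)$ with $\mu=\emptyset$ and $B=\emptyset$, so it suffices to exhibit a sign-reversing involution on $\DSS(m,n;\lambda,-,\emptyset,-)$ whose only fixed point is the shape with $\mu=\emptyset$, $B=\emptyset$. Since $W=\emptyset$ throughout (the white dots and stripes are excluded here), the weight $\wt_{a,b}(\lambda,\mu,\emptyset,B)$ from \eqref{eq:wt} simplifies to $a^m b^n (-1)^{|B|} q^{|\lambda/\mu| - \norm{B}}$; the unique fixed point $(\lambda,\emptyset,\emptyset,\emptyset)$ then has weight $a^m b^n q^{|\lambda|}$, which matches the claimed value, so everything reduces to building the involution.

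The involution itself I would define by scanning the cells of $(n^m) \setminus \lambda$ — that is, the part of the $m\times n$ box outside $\lambda$ — together with the black stripes, in a fixed order (say, reading the diagonals from one corner), and toggling the first place where a change is possible. Concretely, at each step one either removes a cell from $\mu$ (equivalently, moves a cell from ``skew'' to ``below $\mu$'') while simultaneously creating a black stripe on the diagonal just vacated, or performs the reverse: destroys a black stripe and puts its cells back into $\lambda/\mu$. Because a black stripe is a diagonal run of cells such that $\lambda/\mu$ contains neither the cell above its leftmost cell nor the cell to the right of its rightmost cell, growing $\mu$ downward by one diagonal step creates exactly the configuration needed for those cells to form (or extend) a legal black stripe, and conversely. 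Each such toggle changes $|B|$ by one and changes $|\lambda/\mu| - \norm{B}$ by zero (a cell leaves $\lambda/\mu$ exactly when it enters a black stripe), so the weight is negated — this is the sign-reversing property. The fixed points are precisely the shapes where no toggle is available, which forces $\mu = \emptyset$ and $B = \emptyset$, i.e. the single shape $(\lambda,\emptyset,\emptyset,\emptyset)$.

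The main obstacle, and the place requiring genuine care, is checking that the local move is well-defined and genuinely involutive: one must verify that after removing a cell from $\mu$ the newly exposed diagonal cells really do satisfy the black-stripe boundary conditions (no cell of $\lambda/\mu$ immediately above-left or right), that this does not accidentally merge or split other black stripes in a way the inverse move cannot undo, and that the scanning order makes the "first available toggle" unambiguous. The cleanest way to organize this is to process one diagonal of the box at a time, from the main diagonal outward (or in the order dictated by Kim's original proof), at each diagonal deciding the move based on whether that diagonal currently contributes to $\mu$ or to a black stripe; this localizes the argument to a single diagonal and makes both the legality check and the involution check routine. Since the statement asks us to reproduce Kim's construction from \cite[Theorem~3.2]{Kim1997}, I would present the involution explicitly with a small figure and then dispatch the three verifications (weight-negating, involutive, unique fixed point) in short order, the first two being bookkeeping on \eqref{eq:wt} and the last following from the observation that a nonempty $\mu$ or nonempty $B$ always exposes a toggleable diagonal.
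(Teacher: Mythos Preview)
Your high-level plan is exactly the paper's: build a sign-reversing involution $\phi$ on $\DSS(m,n;\lambda,-,\emptyset,-)$ whose unique fixed point is $(\lambda,\emptyset,\emptyset,\emptyset)$, and read off the weight $a^m b^n q^{|\lambda|}$ from \eqref{eq:wt}. Your weight bookkeeping (each toggle flips the sign via $|B|\to|B|\pm1$ and preserves $|\lambda/\mu|-\norm{B}$) is also correct.

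However, the concrete description of the toggle is garbled in a way that would prevent you from carrying out the verification. First, the cells of $(n^m)\setminus\lambda$ play no role whatsoever: $\lambda$ is fixed, and the action is entirely between $\mu$ and the black stripes, all of which live \emph{inside} $\lambda$. Second, the direction of your move is backwards: removing cells from $\mu$ \emph{enlarges} $\lambda/\mu$ (it does not move cells ``from skew to below $\mu$''), and it is precisely those newly exposed cells that become a black stripe; conversely, absorbing a black stripe \emph{grows} $\mu$ and shrinks $\lambda/\mu$. Third, the paper's (i.e.\ Kim's) involution is not a diagonal scan. It compares two row indices: $r$, the number of nonempty rows of $\mu$, and $s$, the lowest row containing a black dot (with $s=0$ if $B=\emptyset$). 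If $s\ge r$, slide the unique black stripe meeting row $s$ all the way left and absorb those cells into $\mu$; if $s<r$, peel off the leftmost cells of the last $t$ rows of $\mu$ (for the unique $t$ making this legal) and declare them a new black stripe ending in row $r$. This row-based rule is what makes the ``well-defined and involutive'' check routine; your diagonal-by-diagonal proposal might be salvageable, but as stated it does not pin down a map, and in any case it is not the construction in \cite{Kim1997} you say you intend to reproduce.
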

\begin{proof}
  We construct a sign-reversing involution $\phi:\DSS(m,n;\lambda,-,
  \emptyset,-)\to \DSS(m,n;\lambda,-, \emptyset,-)$ as follows.

  Let $(\lambda,\mu,\emptyset,B)\in \DSS(m,n;\lambda,-,\emptyset,-)$.  Suppose
  $\mu$ has $r$ nonempty rows and the lowest row containing a black dot is row
  $s$. If there is no black stripe, then we let $s=0$. If $r=s=0$, equivalently
  $\mu=B=\emptyset$, we define $(\lambda,\mu,\emptyset,B)$ to be a fixed
  point. Otherwise, we define $\phi(\lambda,\mu,\emptyset,B)$ as follows.

  Case 1: If $s\geq r$, move the dots in the stripe containing a dot in row $s$
  (there must be a unique such stripe) all the way to the left and fill the
  cells containing these dots.

  Case 2: If $s<r$, we remove the leftmost cells of the last $t$ rows in $\mu$
  and build a black stripe of size $t$ ending in row $r$ for the unique integer
  $t$ for which this operation is possible.

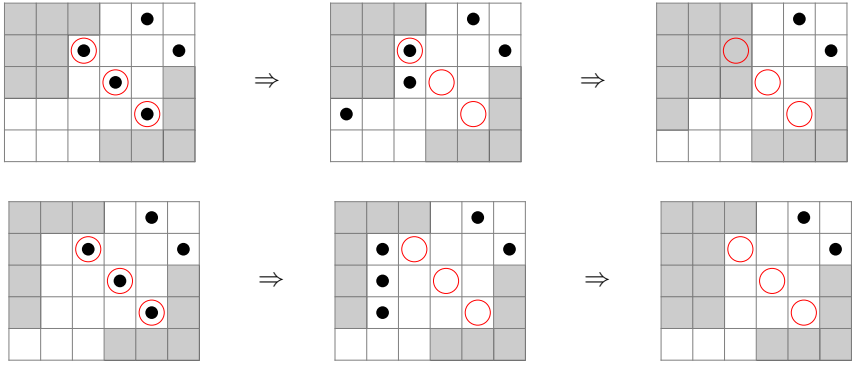
\begin{figure}
  \centering
\begin{pspicture}(0,0)(6,-5)
\gcell(1,1)[] \gcell(1,2)[] \gcell(2,1)[] \gcell(2,2)[] \gcell(1,3)[] \gcell(3,1)[] 
\gcell(3,6)[] \gcell(4,6)[] \gcell(5,6)[]  \gcell(5,4)[] \gcell(5,5)[] \gcell(3,2)[]
\bcirc(2,3) \bcirc(3,4) \bcirc(4,5) \bcirc(2,6) \bcirc(1,5)
\dcirc(2,3) \dcirc(3,4) \dcirc(4,5)
  \psgrid (0,0)(6,-5)
\end{pspicture}
\begin{pspicture}(-4,0)(6,-5)
\rput(-2,-2.5){$\Rightarrow$}
\gcell(1,1)[] \gcell(1,2)[] \gcell(2,1)[] \gcell(2,2)[] \gcell(1,3)[] \gcell(3,1)[] 
\gcell(3,6)[] \gcell(4,6)[] \gcell(5,6)[]  \gcell(5,4)[] \gcell(5,5)[] \gcell(3,2)[]
\bcirc(2,3) \bcirc(4,1) \bcirc(3,3) \bcirc(2,6) \bcirc(1,5)
\dcirc(2,3) \dcirc(3,4) \dcirc(4,5)
  \psgrid (0,0)(6,-5)
\end{pspicture}
\begin{pspicture}(-4,0)(6,-5)
\rput(-2,-2.5){$\Rightarrow$}
\gcell(1,1)[] \gcell(1,2)[] \gcell(2,1)[] \gcell(2,2)[] \gcell(1,3)[] \gcell(3,1)[] 
\gcell(3,6)[] \gcell(4,6)[] \gcell(5,6)[]  \gcell(5,4)[] \gcell(5,5)[] \gcell(3,3)[]
\gcell(2,3)[] \gcell(3,2)[] \gcell(4,1)[] 
\bcirc(2,6) \bcirc(1,5)
\dcirc(2,3) \dcirc(3,4) \dcirc(4,5)
  \psgrid (0,0)(6,-5)
\end{pspicture}
\vspace{.5cm}

\begin{pspicture}(0,0)(6,-5)
\gcell(1,1)[] \gcell(1,2)[] \gcell(2,1)[] \gcell(1,3)[] 
\gcell(3,1)[] \gcell(4,1)[] 
\gcell(3,6)[] \gcell(4,6)[] \gcell(5,6)[]  \gcell(5,4)[] \gcell(5,5)[] 
\bcirc(2,6) \bcirc(1,5)
\dcirc(2,3) \dcirc(3,4) \dcirc(4,5)
\bcirc(2,3) \bcirc(3,4) \bcirc(4,5)
 \psgrid (0,0)(6,-5)
\end{pspicture}
\begin{pspicture}(-4,0)(6,-5)
\rput(-2,-2.5){$\Rightarrow$}
\gcell(1,1)[] \gcell(1,2)[] \gcell(2,1)[] \gcell(1,3)[] 
\gcell(3,6)[] \gcell(4,6)[] \gcell(5,6)[]  \gcell(5,4)[] \gcell(5,5)[] 
\gcell(3,1)[] \gcell(4,1)[] 
\bcirc(2,6) \bcirc(1,5)
\dcirc(2,3) \dcirc(3,4) \dcirc(4,5)
\bcirc(2,2) \bcirc(3,2) \bcirc(4,2)
  \psgrid (0,0)(6,-5)
\end{pspicture}
\begin{pspicture}(-4,0)(6,-5)
\rput(-2,-2.5){$\Rightarrow$}
\gcell(1,1)[] \gcell(1,2)[] \gcell(2,1)[] \gcell(2,2)[] \gcell(1,3)[] \gcell(3,1)[] 
\gcell(3,6)[] \gcell(4,6)[] \gcell(5,6)[]  \gcell(5,4)[] \gcell(5,5)[] 
\gcell(4,1)[] \gcell(3,2)[] \gcell(4,2)[] 
\bcirc(2,6) \bcirc(1,5)
\dcirc(2,3) \dcirc(3,4) \dcirc(4,5)
  \psgrid (0,0)(6,-5)
\end{pspicture}
\caption{Two examples of the involution $\phi$.}
  \label{fig:dkim_inv}
\end{figure}
 
See Figure~\ref{fig:dkim_inv} for some examples of this map.  It is easy to
check that $\phi$ is a sign-reversing involution on
$\DSS(m,n;\lambda,-,\emptyset,-) $ with only one fixed point
$(\lambda,\emptyset,\emptyset,\emptyset)$, which implies the desired identity.
\end{proof}

The involution $\phi$ of D. Kim cannot be directly applied to doubly striped skew
shapes with white stripes. However, in the proof of the next lemma, we show that
there is a simple way to extend his involution when we have white stripes.

\begin{lem}\label{lem:general_dkim}
We have
\[
\sum_{S\in \DSS(m,n)} \wt_{a,b}(S) = 
\sum_{S\in \DSS(m,n;-,\emptyset,-,\emptyset)} \wt_{a,b}(S).
\]  
\end{lem}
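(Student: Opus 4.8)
The plan is to reduce the left-hand side to the right-hand side in two stages, eliminating the $\mu$-part first and then the $B$-part, by producing sign-reversing involutions that extend D.~Kim's map $\phi$ to doubly striped skew shapes that also carry white stripes. The key observation is that the weight \eqref{eq:wt} treats the white stripes completely separately from everything else: the factor $(-1)^{|W|}q^{-\norm{W}}(q/ab)^{|W|}$ depends only on $W$, and the cells of $\lm$ occupied by white dots contribute $q^{|\lm|}$ exactly as any other cell of $\lm$ does. So I would fix $\lambda$ and the set $W$ of white stripes, and then run an involution on the remaining data $(\mu, B)$ that ranges over doubly striped skew shapes of shape $\lambda$ with \emph{no} white stripes but with the cells of $W$ forbidden (they must belong to $\lm$, and a black stripe may not re-use a white dot since $W\cap B=\emptyset$).

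First I would show
\[
\sum_{S\in \DSS(m,n)} \wt_{a,b}(S) = \sum_{S\in \DSS(m,n;-,\emptyset,-,-)} \wt_{a,b}(S),
\]
i.e.\ that one may assume $\mu=\emptyset$. For this I adapt $\phi$ from the proof of Lemma~\ref{lem:dkim}: with $\lambda$ and $W$ held fixed, let $r$ be the number of nonempty rows of $\mu$ (counting only rows whose leftmost removable cell is \emph{not} a white dot, so that the move is legal) and $s$ the lowest row containing a black dot; apply Case~1 ($s\ge r$: slide the black stripe through row $s$ to the left and fill) or Case~2 ($s<r$: carve a black stripe of the appropriate size $t$ out of the last $t$ rows of $\mu$) exactly as before. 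The white dots are simply carried along; since $W$ is untouched, the map is well-defined, weight-reversing on non-fixed points, and its fixed points are precisely those with $\mu=\emptyset$. Then, with $\mu=\emptyset$ now fixed, I run the \emph{mirror-image} of the same argument to push the black stripes out of existence: reflecting the skew shape across the main diagonal swaps the roles of rows and columns, of north and east steps, and (in the original Motzkin picture) of up-steps and down-steps, but leaves the white/black bookkeeping intact up to relabelling; applying $\phi$ in this transposed coordinate system gives an involution whose only fixed points have $B=\emptyset$, yielding
\[
\sum_{S\in \DSS(m,n;-,\emptyset,-,-)} \wt_{a,b}(S) = \sum_{S\in \DSS(m,n;-,\emptyset,-,\emptyset)} \wt_{a,b}(S),
\]
which is the assertion of the lemma.

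The main obstacle I anticipate is checking that Kim's involution remains \emph{well-defined and bijective} in the presence of the forbidden white cells. In Case~2 of $\phi$ one removes the leftmost cells of the last $t$ rows of $\mu$; if one of those cells is a white dot, the move is illegal, and one must argue that the correct definition of ``$r$'' (the number of rows available to $\phi$) already excludes such rows, so that $\phi$ never attempts an illegal move and its inverse never produces a configuration in which a black stripe overlaps $W$. Dually, in Case~1 one slides a black stripe to the left and fills the vacated cells; I must check these cells are not white dots, again by the right choice of the statistic. Once one verifies that the white stripes form an ``inert'' region that $\phi$ can simply route around — which is plausible precisely because white and black stripes lean in opposite diagonal directions, so a black stripe being slid leftward along its row cannot collide with a white stripe occupying that same row — the weight identity \eqref{eq:wt} is preserved termwise and the two reductions compose to give the lemma.
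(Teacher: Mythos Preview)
Your plan hinges on holding $(\lambda,W)$ fixed while running a sign-reversing involution on $(\mu,B)$. This cannot work, and here is a small counterexample. Take $m=n=2$, $\lambda=(2,2)$, and let $W$ consist of the single white stripe $\{(1,1),(2,2)\}$. Then $\mu$ is forced to be $\emptyset$ (any nonempty $\mu$ contains $(1,1)$, expelling a white dot from $\lambda/\mu$), and the only admissible black stripe disjoint from $W$ is $\{(1,2)\}$. The $(\mu,B)$-sum of $(-1)^{|B|}q^{|\lambda/\mu|-\norm{B}}$ is therefore $q^{4}-q^{3}$, not the single fixed-point value $q^{4}$; no sign-reversing involution on this two-element set can have $(\emptyset,\emptyset)$ as its unique fixed point. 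The paper's involution $\psi$ in fact pairs the offending configuration $((2,2),\emptyset,\{\{(1,1),(2,2)\}\},\{\{(1,2)\}\})$ with $((2,1),(1),\{\{(1,2)\}\},\emptyset)$, which lives over a \emph{different} $\lambda$ and a \emph{different} $W$.

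Your heuristic for why collisions should be avoidable is also mistaken: white and black stripes are both diagonal sets $\{(r{+}i,s{+}i)\}_i$ and hence run in the \emph{same} direction; they differ only in their boundary constraints. In particular, when Case~1 of $\phi$ pushes a black dot to the leftmost cell of $\lambda/\mu$ in its row, that is exactly the kind of cell on which the top dot of a white stripe may sit, so collisions are generic rather than avoidable by redefining $r$. Separately, note that $\phi$'s fixed points already satisfy $\mu=\emptyset$ \emph{and} $B=\emptyset$ (the criterion is $r=s=0$), so a two-stage split is not what $\phi$ hands you; and transposition swaps white with black stripes, so your proposed stage~2 would eliminate $W$ rather than $B$.

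The paper's resolution is a single involution $\psi$: apply $\phi$ \emph{ignoring} the white stripes, and whenever a newly filled cell of $\mu$ lands on a white dot, slide that white stripe one step to the right (deleting its rightmost dot together with the corresponding cell of $\lambda$ if the slide would leave $\lambda$). Allowing $W$ and $\lambda$ to move is the missing ingredient.
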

\begin{proof}
  Let $(\lambda,\mu,W,B)\in \DSS(m,n)$. Ignoring the white stripes, we apply the
  involution $\phi$. Then the newly added cells may overlap with white
  stripes. In this case we slide each overlapped white stripe one step to the
  right. If the rightmost dot in the stripe cannot be moved, then we delete this
  dot and the cell containing it from $\lambda$ and slide the white strip. See
  Figure~\ref{fig:slide}. We define $\psi(\lambda,\mu,W,B)$ to be the resulting
  doubly striped skew shape.

  One can easily see that $\psi$ is a sign-reversing involution on $\DSS(m,n)$
  with fixed point set $\DSS(m,n;-,\emptyset,-,\emptyset)$. Thus we get the
  desired identity.
\end{proof}

\begin{figure}
  \centering
\begin{pspicture}(0,0)(5,-5)
  \gcell(1,1)[] \wcirc(1,1) \wcirc(2,2) \wcirc(3,3) \wcirc(4,4)
\gcell(5,4)[] \gcell(5,5)[]
\psgrid(0,0)(1,-1) \psgrid(3,-4)(5,-5)
\end{pspicture}%
\begin{pspicture}(-2,0)(5,-5)
\rput(-1,-2.5){$\Rightarrow$}
  \gcell(1,1)[] \wcirc(1,2) \wcirc(2,3) \wcirc(3,4) \wcirc(4,5)
\gcell(5,4)[] \gcell(5,5)[]
\psgrid(0,0)(1,-1) \psgrid(3,-4)(5,-5)
\end{pspicture} 
\begin{pspicture}(-5,0)(5,-5)
\rput(-2.5,-2.5){and}
  \gcell(1,1)[] \wcirc(1,1) \wcirc(2,2) \wcirc(3,3) \wcirc(4,4)
\gcell(5,4)[] \gcell(5,5)[] \gcell(4,5)[]
\psgrid(0,0)(1,-1) \psgrid(3,-4)(5,-5) \psgrid(4,-3)(5,-4)
\end{pspicture}%
\begin{pspicture}(-2,0)(5,-5)
\rput(-1,-2.5){$\Rightarrow$}
  \gcell(1,1)[] \wcirc(1,2) \wcirc(2,3) \wcirc(3,4) 
\gcell(5,4)[] \gcell(5,5)[] \gcell(4,5)[] \gcell(4,4)[]
\psgrid(0,0)(1,-1) \psgrid(3,-3)(5,-5) 
\end{pspicture}
  \caption{Slide rules}
  \label{fig:slide}
\end{figure}
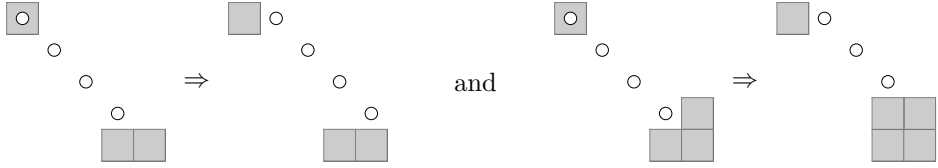

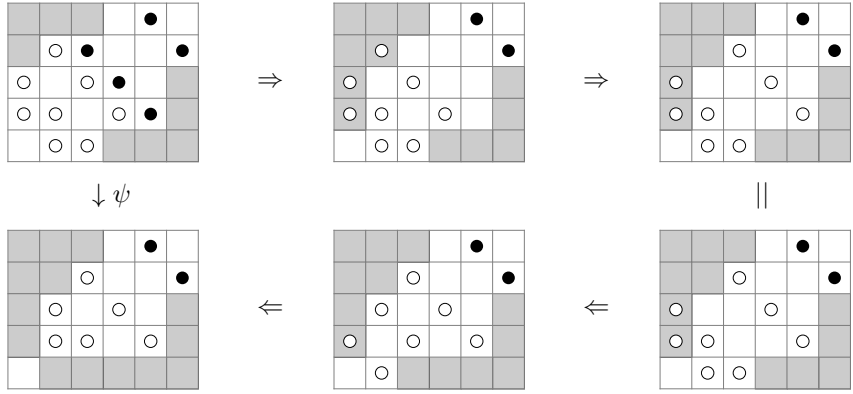
\begin{figure}
  \centering
\begin{pspicture}(0,0)(6,-5)
\gcell(1,1)[] \gcell(1,2)[] \gcell(2,1)[] \gcell(1,3)[] 
\gcell(3,6)[] \gcell(4,6)[] \gcell(5,6)[]  \gcell(5,4)[] \gcell(5,5)[] 
\bcirc(2,6) \bcirc(1,5)
\bcirc(2,3) \bcirc(3,4) \bcirc(4,5)
\wcirc(2,2) \wcirc(3,3) \wcirc(4,4)
\wcirc(3,1) \wcirc(4,2) \wcirc(5,3)
\wcirc(4,1) \wcirc(5,2)
 \psgrid (0,0)(6,-5)
\end{pspicture}
\begin{pspicture}(-4,0)(6,-5)
\rput(-2,-2.5){$\Rightarrow$}
\gcell(1,1)[] \gcell(1,2)[] \gcell(2,1)[] \gcell(2,2)[] \gcell(1,3)[] \gcell(3,1)[] 
\gcell(3,6)[] \gcell(4,6)[] \gcell(5,6)[]  \gcell(5,4)[] \gcell(5,5)[] 
\gcell(4,1)[] \bcirc(2,6) \bcirc(1,5)
\wcirc(2,2) \wcirc(3,3) \wcirc(4,4)
\wcirc(3,1) \wcirc(4,2) \wcirc(5,3)
\wcirc(4,1) \wcirc(5,2)
  \psgrid (0,0)(6,-5)
\end{pspicture}
\begin{pspicture}(-4,0)(6,-5)
\rput(-2,-2.5){$\Rightarrow$}
\gcell(1,1)[] \gcell(1,2)[] \gcell(2,1)[] \gcell(2,2)[] \gcell(1,3)[] \gcell(3,1)[] 
\gcell(3,6)[] \gcell(4,6)[] \gcell(5,6)[]  \gcell(5,4)[] \gcell(5,5)[] 
\gcell(4,1)[] \bcirc(2,6) \bcirc(1,5)
\wcirc(2,3) \wcirc(3,4) \wcirc(4,5)
\wcirc(3,1) \wcirc(4,2) \wcirc(5,3)
\wcirc(4,1) \wcirc(5,2)
  \psgrid (0,0)(6,-5)
\end{pspicture} 

\begin{pspicture}(0,0)(3,2)
\rput(3,1){$\downarrow \psi$}  
\end{pspicture}%
\begin{pspicture}(0,0)(23,2)
\rput(20.5,1){$||$}  
\end{pspicture}

\begin{pspicture}(0,0)(6,-5)
\gcell(1,1)[] \gcell(1,2)[] \gcell(2,1)[] \gcell(2,2)[] \gcell(1,3)[] \gcell(3,1)[] 
\gcell(3,6)[] \gcell(4,6)[] \gcell(5,6)[]  \gcell(5,4)[] \gcell(5,5)[] 
\gcell(4,1)[] \bcirc(2,6) \bcirc(1,5)
\wcirc(2,3) \wcirc(3,4) \wcirc(4,5)
\wcirc(3,2) \wcirc(4,3) \gcell(5,3)[]
\wcirc(4,2)  \gcell(5,2)[]
  \psgrid (0,0)(6,-5)
\end{pspicture}
\begin{pspicture}(-4,0)(6,-5)
\rput(-2,-2.5){$\Leftarrow$}
\gcell(1,1)[] \gcell(1,2)[] \gcell(2,1)[] \gcell(2,2)[] \gcell(1,3)[] \gcell(3,1)[] 
\gcell(3,6)[] \gcell(4,6)[] \gcell(5,6)[]  \gcell(5,4)[] \gcell(5,5)[] 
\gcell(4,1)[] \bcirc(2,6) \bcirc(1,5)
\wcirc(2,3) \wcirc(3,4) \wcirc(4,5)
\wcirc(3,2) \wcirc(4,3) \gcell(5,3)[]
\wcirc(4,1) \wcirc(5,2)
  \psgrid (0,0)(6,-5)
\end{pspicture}
\begin{pspicture}(-4,0)(6,-5)
\rput(-2,-2.5){$\Leftarrow$}
\gcell(1,1)[] \gcell(1,2)[] \gcell(2,1)[] \gcell(2,2)[] \gcell(1,3)[] \gcell(3,1)[] 
\gcell(3,6)[] \gcell(4,6)[] \gcell(5,6)[]  \gcell(5,4)[] \gcell(5,5)[] 
\gcell(4,1)[] \bcirc(2,6) \bcirc(1,5)
\wcirc(2,3) \wcirc(3,4) \wcirc(4,5)
\wcirc(3,1) \wcirc(4,2) \wcirc(5,3)
\wcirc(4,1) \wcirc(5,2)
  \psgrid (0,0)(6,-5)
\end{pspicture}
 \caption{An example of the involution $\psi$.}
  \label{fig:extended_inv}
\end{figure}

Now we are ready to prove Proposition~\ref{prop:dss}.

\begin{proof}[Proof of \eqref{eq:dss1}]
By Lemma~\ref{lem:general_dkim} we have
\begin{align*}
\sum_{i=0}^k \sum_{S\in \DSS(i,k-i)} \wt_{a,b}(S) &=
\sum_{i=0}^k \sum_{S\in \DSS(i,k-i;-, \emptyset,-,\emptyset)} \wt_{a,b}(S)\\
&=\sum_{u+v+2t=k} a^u b^v (-1)^t q^t 
\sum_{(\lambda,\emptyset,W,\emptyset) \in \DSS(u+t,v+t), |W|=t} q^{|\lambda|-\norm{W}}.
\end{align*}
It remains to show that for fixed nonnegative integers $u,v,t$ we have
\begin{equation}
  \label{eq:14}
\sum_{(\lambda,\emptyset,W,\emptyset) \in \DSS(u+t,v+t), |W|=t}
q^{|\lambda|-\norm{W}}
= q^{\binom{t}2} \qbinom{u+v+t}{u,v,t}.
\end{equation}
Suppose $(\lambda,\emptyset,W,\emptyset) \in \DSS(u+t,v+t)$ and $|W|=t$.  For
each white stripe we mark the row containing the leftmost white dot and chop off
the column containing the rightmost white dot, see
Figure~\ref{fig:dss_to_word}. Then the resulting partition is contained in a
$(u+t)\times v$ rectangle. Consider this partition as a lattice path from
$(0,0)$ to $(v,u+t)$ with three steps: an east step, a north step, and a marked
north step. Here a marked north step corresponds to a marked row of the
partition. Then we define the word $w=w_1w_2\dots w_{u+v+t}$ by $w_i=0,1$, or
$2$ if the $i$th step is a north step, a marked north step, or an east step
respectively. Then $w$ is a permutation of $u$ 0's, $t$ 1's and $v$ 2's.  It is
not hard to see that $(\lambda,\emptyset,W,\emptyset)\mapsto w$ is a bijection
from $\DSS(u+t,v+t)$ to the set $S(0^u,1^t,2^v)$ of permutations of $u$ 0's, $t$
1's and $v$ 2's such that
\[
|\lambda|-\norm{W} = \inv(w) + \binom{t}{2},
\]
where $\inv(w)$ is the number of pairs $(i,j)$ of integers such that $i<j$ and
$w_i>w_j$.
Since 
\[
\sum_{w\in S(0^u,1^t,2^v)} q^{\inv(w)} = \qbinom{u+v+t}{u,v,t},
\] 
we get \eqref{eq:14}. 

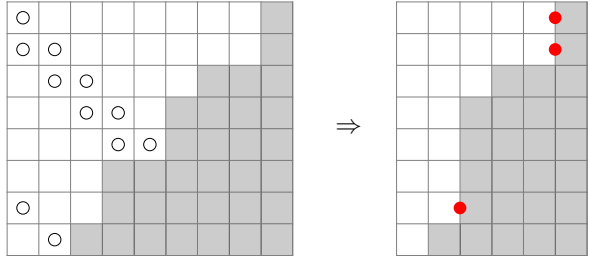
\begin{figure}
  \centering
\begin{pspicture}(0,0)(9,-8) 
\gcell(1,9)[] 
\gcell(2,9)[] 
\gcell(3,9)[] \gcell(3,8)[] \gcell(3,7)[] 
\gcell(4,9)[] \gcell(4,8)[] \gcell(4,7)[] \gcell(4,6)[] 
\gcell(5,9)[] \gcell(5,8)[] \gcell(5,7)[] \gcell(5,6)[] 
\gcell(6,9)[] \gcell(6,8)[] \gcell(6,7)[] \gcell(6,6)[] \gcell(6,5)[] \gcell(6,4)[] 
\gcell(7,9)[] \gcell(7,8)[] \gcell(7,7)[] \gcell(7,6)[] \gcell(7,5)[] \gcell(7,4)[] 
\gcell(8,9)[] \gcell(8,8)[] \gcell(8,7)[] \gcell(8,6)[] \gcell(8,5)[] \gcell(8,4)[] \gcell(8,3)[] 
\wcirc(7,1) \wcirc(8,2)  
\wcirc(1,1) \wcirc(2,2)  \wcirc(3,3)  \wcirc(4,4)  \wcirc(5,5)
\wcirc(2,1) \wcirc(3,2)  \wcirc(4,3)  \wcirc(5,4) 
  \psgrid (0,0)(9,-8)
\end{pspicture}
\begin{pspicture}(-3,0)(6,-8) 
\rput(-1.5,-4){$\Rightarrow$}
\gcell(1,6)[] 
\gcell(2,6)[] 
\gcell(3,6)[] \gcell(3,5)[] \gcell(3,4)[] 
\gcell(4,6)[] \gcell(4,5)[] \gcell(4,4)[] \gcell(4,3)[] 
\gcell(5,6)[] \gcell(5,5)[] \gcell(5,4)[] \gcell(5,3)[] 
\gcell(6,6)[] \gcell(6,5)[] \gcell(6,4)[] \gcell(6,3)[] 
\gcell(7,6)[] \gcell(7,5)[] \gcell(7,4)[] \gcell(7,3)[] 
\gcell(8,6)[] \gcell(8,5)[] \gcell(8,4)[] \gcell(8,3)[] \gcell(8,2)[] 
  \psgrid (0,0)(6,-8)
\pscircle*[linecolor=red](5,-.5){.2}
\pscircle*[linecolor=red](5,-1.5){.2}
\pscircle*[linecolor=red](2,-6.5){.2}
\end{pspicture}
\caption{An example of the intermediate step of the map from $\DSS(u+t,v+t)$ to
  $S(0^u,1^t,2^v)$. The word corresponding to this example is $2021000202211$.}
  \label{fig:dss_to_word}
\end{figure}

\end{proof}
 
\begin{proof}[Proof of \eqref{eq:dss2}]
For $S\in \DSS(m,n)$, let $S'$ be the doubly striped skew shape obtained by
rotating $S$ by an angle of $180^\circ$ and exchanging blacks stripes and white
stripes. The map $S\mapsto S'$ is clearly a bijection. If
$S=(\lambda,\mu,W,B)\in \DSS(m,n)$, then $S'=(\lambda',\mu',W',B')\in \DSS(m,n)$
satisfies $\lambda' = (n^m) - \mu$, $\mu' = (n^m) - \lambda$, $|W'|=|B|$,
$|B'|=|W|$, $\norm{W'}=\norm{B}$, and $\norm{B'}=\norm{W}$.
Thus $\wt_{a,b}(S) = \wt_{a,b}(S') (q/ab)^{|W|-|B|}$. In particular, we have
$\wt_{a,q/a}(S) = \wt_{a,q/a}(S')$. 

By Lemma~\ref{lem:general_dkim}, the map $S\mapsto S'$ and Lemma~\ref{lem:dkim},
we have
\begin{align*}
\sum_{i=0}^k \sum_{S\in \DSS(i,k-i)} \wt_{a,q/a}(S) &=
\sum_{i=0}^k \sum_{S\in \DSS(i,k-i;-, \emptyset,-,\emptyset)} \wt_{a,q/a}(S)\\
&=\sum_{i=0}^k \sum_{S'\in \DSS(i,k-i;((k-i)^i),-,\emptyset,-)} \wt_{a,q/a}(S')\\
&=\sum_{i=0}^k \sum_{S'\in \DSS(i,k-i;((k-i)^i),\emptyset,\emptyset,\emptyset)} \wt_{a,q/a}(S')
=\sum_{i=0}^k a^i (q/a)^{k-i} q^{i(k-i)},
\end{align*}
which finishes the proof.
\end{proof}

\section{Staircase tableaux}
\label{sec:staircase-tableaux}

In this section we review the second combinatorial model for the moments of 
the Askey-Wilson polynomials, called staircase tableaux. The staircase tableaux were first
introduced in \cite{Corteel2011} and further studied in \cite{CSSW}.  Using the
staircase tableaux we shall find the coefficient of the highest term in $2^n(abcd;q)_n
\mu_n(a,b,c,d;q)$ in Theorem~\ref{thm:highest}. 

A \emph{staircase tableau of size $n$} is a filling of the Young diagram of the
staircase partition $(n,n-1,\dots,1)$ with $\alpha,\beta,\gamma,\delta$ such
that
\begin{itemize}
\item every diagonal cell is nonempty, 
\item all cells above an $\alpha$ or $\gamma$ in the same column are empty,
\item all cells to the left of a $\beta$ or $\delta$ in the same row are empty.
\end{itemize}
Here a \emph{diagonal cell} is a cell in the $i$th row and $(n+1-i)$th column
for some $i\in \{1,2,\dots,n\}$. 

We denote by $\T(n)$ the set of staircase tableaux of size $n$.  

Each empty cell $s$ of $T\in\T(n)$ is labeled uniquely as follows. Here, for
brevity let $\RIGHT(s)$ be the first nonempty cell to the right of $s$ in the
same row, and $\BELOW(s)$ the first nonempty cell below $s$ in the
same column. 
\begin{itemize}
\item If $\RIGHT(s)$ has a $\beta$, then $s$ is labeled with $u$.
\item If $\RIGHT(s)$ has a $\delta$, then $s$ is labeled with $q$.
\item If $\RIGHT(s)$ has an $\alpha$ or $\gamma$, and $\BELOW(s)$ has an $\alpha$
  or $\delta$, then $s$ is labeled with $u$.
\item If $\RIGHT(s)$ has an $\alpha$ or $\gamma$, and $\BELOW(s)$ has an $\beta$
  or $\gamma$, then $s$ is labeled with $q$.
\end{itemize}
See Figure~\ref{fig:staircasetableau} for an example of a staircase tableau and
the labeling of its empty cells. 

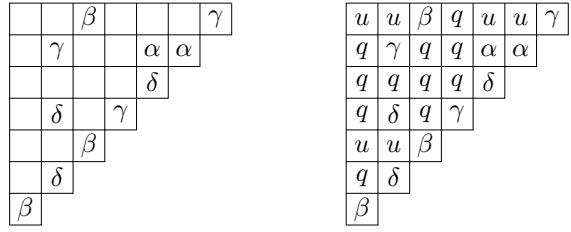
\begin{figure}
  \centering
\begin{pspicture}(0,0)(7,-7) \psline(7,0)(0,0)(0,-7) \cell(1,1)[]
  \cell(1,2)[] \cell(1,3)[$\beta$] \cell(1,4)[] \cell(1,5)[]
  \cell(1,6)[] \cell(1,7)[$\gamma$] \cell(2,1)[] \cell(2,2)[$\gamma$]
  \cell(2,3)[] \cell(2,4)[] \cell(2,5)[$\alpha$] \cell(2,6)[$\alpha$]
  \cell(3,1)[] \cell(3,2)[] \cell(3,3)[] \cell(3,4)[]
  \cell(3,5)[$\delta$] \cell(4,1)[] \cell(4,2)[$\delta$] \cell(4,3)[]
  \cell(4,4)[$\gamma$] \cell(5,1)[] \cell(5,2)[] \cell(5,3)[$\beta$]
  \cell(6,1)[] \cell(6,2)[$\delta$] \cell(7,1)[$\beta$]
\end{pspicture}
\qquad \qquad
\begin{pspicture}(0,0)(7,-7) \psline(7,0)(0,0)(0,-7) \cell(1,1)[$u$]
  \cell(1,2)[$u$] \cell(1,3)[$\beta$] \cell(1,4)[$q$] \cell(1,5)[$u$]
  \cell(1,6)[$u$] \cell(1,7)[$\gamma$] \cell(2,1)[$q$] \cell(2,2)[$\gamma$]
  \cell(2,3)[$q$] \cell(2,4)[$q$] \cell(2,5)[$\alpha$] \cell(2,6)[$\alpha$]
  \cell(3,1)[$q$] \cell(3,2)[$q$] \cell(3,3)[$q$] \cell(3,4)[$q$]
  \cell(3,5)[$\delta$] \cell(4,1)[$q$] \cell(4,2)[$\delta$] \cell(4,3)[$q$]
  \cell(4,4)[$\gamma$] \cell(5,1)[$u$] \cell(5,2)[$u$] \cell(5,3)[$\beta$]
  \cell(6,1)[$q$] \cell(6,2)[$\delta$] \cell(7,1)[$\beta$]
\end{pspicture}
  \caption{A staircase tableau and the labeling of its empty cells.}
  \label{fig:staircasetableau}
\end{figure}

For $T\in\T(n)$, we define $\blk(T)$ to be the number of $\alpha$'s and
$\delta$'s on the diagonal cells, and $A(T)$, $B(T)$, $C(T)$, $D(T)$, $E(T)$ to
be the number of $\alpha$'s, $\beta$'s, $\gamma$'s, $\delta$'s, empty cells
labeled with $q$ in $T$ respectively.  For example, if $T$ is the staircase
tableau in Figure~\ref{fig:staircasetableau}, we have $\blk(T)=3$, $A(T)=2$,
$B(T)=3$, $C(T)=3$, $D(T)=3$, and $E(T)=11$.

Let
\[
Z_n(y;\alpha,\beta,\gamma,\delta;q) 
=\sum_{T\in \T(n)} y^{\blk(T)} \alpha^{A(T)} \beta^{B(T)} \gamma^{C(T)}
\delta^{D(T)} q^{E(T)}.
\]

Corteel et al. \cite{CSSW} showed that
\begin{equation}
  \label{eq:CSSW}
\mu_n(a,b,c,d;q) = 
\frac{(1-q)^n}{2^n i^n \prod_{j=0}^{n-1}(\alpha\beta-\gamma\delta q^j)}
Z_n(-1;\alpha,\beta,\gamma,\delta;q),
\end{equation}
where 
\[
  \alpha = \frac{1-q}{(1+ai)(1+ci)}, \quad
\beta = \frac{1-q}{(1-bi)(1-di)}, \quad
\gamma = \frac{ac(1-q)}{(1+ai)(1+ci)}, \quad
\delta = \frac{bd(1-q)}{(1-bi)(1-di)}.
\]

Since 
\[
\alpha\beta-\gamma\delta q^j 
= \frac{(1-q)^2(1-abcdq^j)}{(1+ai)(1+ci)(1-bi)(1-di)}
\]
we can rewrite \eqref{eq:CSSW} as the next proposition.
\begin{prop} 
\label{prop:imagmom}
The Askey-Wilson moments satisfy
\begin{multline}
  2^n (abcd;q)_n \mu_n(a,b,c,d;q) = i^{-n} \sum_{T\in\T(n)} 
(-1)^{\blk(T)} (1-q)^{A(T)+B(T)+C(T)+D(T)-n} q^{E(T)} \\
\times  (ac)^{C(T)} (bd)^{D(T)}
\big((1+ai)(1+ci)\big)^{n-A(T)-C(T)} \big((1-bi)(1-di)\big)^{n-B(T)-D(T)}.
\nonumber
\end{multline}
\end{prop}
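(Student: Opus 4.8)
The plan is to obtain Proposition~\ref{prop:imagmom} as a direct algebraic consequence of the staircase-tableau formula \eqref{eq:CSSW} of Corteel et al. First I would expand $Z_n(-1;\alpha,\beta,\gamma,\delta;q)$ as the sum over $T\in\T(n)$ of $(-1)^{\blk(T)}\alpha^{A(T)}\beta^{B(T)}\gamma^{C(T)}\delta^{D(T)}q^{E(T)}$ and substitute the explicit values of $\alpha,\beta,\gamma,\delta$. The observation that makes the bookkeeping manageable is that $\alpha$ and $\gamma$ share the denominator $(1+ai)(1+ci)$ and the numerator factor $1-q$ (with $\gamma$ carrying an extra $ac$), while $\beta$ and $\delta$ share the denominator $(1-bi)(1-di)$ (with $\delta$ carrying an extra $bd$). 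Hence each summand's monomial factors cleanly as $(1-q)^{A+B+C+D}(ac)^{C}(bd)^{D}$ divided by $((1+ai)(1+ci))^{A+C}((1-bi)(1-di))^{B+D}$, where I abbreviate $A=A(T)$, etc.

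Next I would simplify the prefactor of \eqref{eq:CSSW}. Using the identity for $\alpha\beta-\gamma\delta q^{j}$ recorded immediately before the proposition, the product $\prod_{j=0}^{n-1}(\alpha\beta-\gamma\delta q^{j})$ factors as $(1-q)^{2n}(abcd;q)_{n}$ over $((1+ai)(1+ci)(1-bi)(1-di))^{n}$. Substituting this together with the monomial factorization into \eqref{eq:CSSW} and multiplying both sides by $2^{n}(abcd;q)_{n}$ cancels the factor $(abcd;q)_n$, leaving a finite sum of explicit rational functions of $a,b,c,d,q,i$.

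The final step is purely collecting exponents: the net power of $1-q$ in the summand becomes $(1-q)^{A+B+C+D-n}$; dividing the cleared $((1+ai)(1+ci))^{n}$ and $((1-bi)(1-di))^{n}$ by the monomial denominators $((1+ai)(1+ci))^{A+C}$ and $((1-bi)(1-di))^{B+D}$ produces the exponents $n-A-C$ and $n-B-D$; and the factor $i^{-n}$ is pulled out front from $1/i^{n}$. This yields precisely the stated formula. There is essentially no obstacle beyond careful bookkeeping; the one point to watch is keeping the two quadratic factors $(1+ai)(1+ci)$ and $(1-bi)(1-di)$ assigned to their own running exponents throughout, so that the subtractions $n-A(T)-C(T)$ and $n-B(T)-D(T)$ are performed against the correct factor of the cleared product.
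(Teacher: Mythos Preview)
Your proposal is correct and is exactly the approach taken in the paper: the proposition is presented there as the rewriting of \eqref{eq:CSSW} obtained by substituting the explicit $\alpha,\beta,\gamma,\delta$ and using the displayed identity $\alpha\beta-\gamma\delta q^{j}=(1-q)^{2}(1-abcdq^{j})/\big((1+ai)(1+ci)(1-bi)(1-di)\big)$ to evaluate $\prod_{j=0}^{n-1}(\alpha\beta-\gamma\delta q^{j})$. Your bookkeeping of the exponents of $1-q$, $(1+ai)(1+ci)$, and $(1-bi)(1-di)$ matches the paper's computation line for line.
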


The highest degree term appearing in Proposition~\ref{prop:imagmom} is 
$a^n b^n c^n d^n q^{\binom n2}$. This term can be obtained when
  $A(T)+B(T)+C(T)+D(T)+E(T)=\binom{n+1}2$ and $A(T)=B(T)=0$, and the coefficient
  is
\[
i^{-n} (-1)^{\blk(T)}(-1)^{C(T)+D(T)-n} (-1)^{n-C(T)} (-1)^{n-D(T)} =i^{n}
(-1)^{\blk(T)}. 
\]
Thus 
\begin{equation}
  \label{eq:4}
\left[ a^n b^n c^n d^n q^{\binom n2} \right] 2^n(abcd;q)_n \mu_n(a,b,c,d;q)=
\sum_{T\in\CT(n)} i^{n} (-1)^{\blk(T)},
\end{equation}
where $\CT(n)$ is the set of $T\in\T(n)$ with $A(T)=B(T)=0$ and
$C(T)+D(T)+E(T)=\binom{n+1}2$.  We will see below that the tableaux in $\CT(n)$
are effectively the same as Catalan tableaux in \cite{Vi}.

\begin{figure}
  \centering
\begin{pspicture}(0,0)(9,-9) \psline(9,0)(0,0)(0,-9) \cell(1,1)[] \cell(1,2)[]
  \cell(1,3)[] \cell(1,4)[] \cell(1,5)[] \cell(1,6)[] \cell(1,7)[] \cell(1,8)[]
  \cell(1,9)[$\delta$] \cell(2,1)[] \cell(2,2)[] \cell(2,3)[] \cell(2,4)[]
  \cell(2,5)[] \cell(2,6)[] \cell(2,7)[$\delta$] \cell(2,8)[$\gamma$]
  \cell(3,1)[] \cell(3,2)[] \cell(3,3)[] \cell(3,4)[] \cell(3,5)[] \cell(3,6)[]
  \cell(3,7)[$\delta$] \cell(4,1)[$\delta$] \cell(4,2)[$\gamma$] \cell(4,3)[]
  \cell(4,4)[] \cell(4,5)[$\gamma$] \cell(4,6)[$\gamma$] \cell(5,1)[]
  \cell(5,2)[] \cell(5,3)[] \cell(5,4)[] \cell(5,5)[$\delta$] \cell(6,1)[]
  \cell(6,2)[$\delta$] \cell(6,3)[] \cell(6,4)[$\gamma$] \cell(7,1)[]
  \cell(7,2)[$\delta$] \cell(7,3)[$\gamma$] \cell(8,1)[] \cell(8,2)[$\delta$]
  \cell(9,1)[$\delta$]
\end{pspicture}
\qquad\qquad
\begin{pspicture}(0,0)(9,-9) \psline(5,0)(0,0)(0,-4) \cell(1,1)[] \cell(1,2)[]
  \cell(1,3)[] \cell(1,4)[\LTA] \cell(2,1)[\LTA] \cell(2,2)[\UPA] \cell(2,3)[\UPA] \cell(3,1)[]
  \cell(3,2)[\LTA] \cell(4,1)[] \cell(4,2)[\LTA]
\end{pspicture}
  \caption{A staircase tableau without $\alpha$'s and $\beta$'s and the corresponding alternative tableau.}
  \label{fig:alt_tab}
\end{figure}
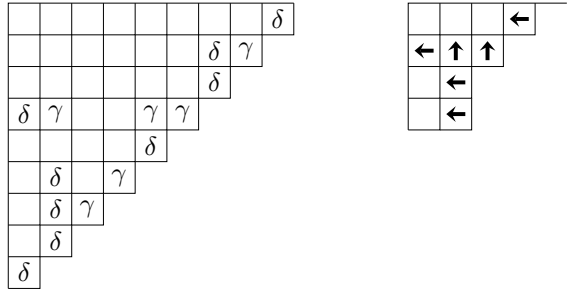

For the remainder of this section we will prove the following theorem.

\begin{thm}\label{thm:highest}
We have
\[
    \left[ a^n b^n c^n d^n q^{\binom n2} \right] 2^n(abcd;q)_n \mu_n(a,b,c,d;q)
 = \Cat\left(\frac n2\right),
\]
where $\Cat(n) =\frac{1}{n+1}\binom{2n}{n}$ if $n$ is a nonnegative integer, and
$\Cat(n)=0$ otherwise. 
\end{thm}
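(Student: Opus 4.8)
The plan is to show that the sum in \eqref{eq:4}, namely $\sum_{T\in\CT(n)} i^n (-1)^{\blk(T)}$, equals $\Cat(n/2)$. First I would analyze the structure of tableaux in $\CT(n)$: these are staircase tableaux with no $\alpha$'s and no $\beta$'s, and with the extremal property $C(T)+D(T)+E(T)=\binom{n+1}{2}$, which forces every cell to be either a $\gamma$, a $\delta$, or an empty cell labeled $q$ — in particular no empty cell is labeled $u$. Using the labeling rules for empty cells, the condition ``no cell is labeled $u$'' severely restricts the configuration: a cell is labeled $u$ precisely when $\RIGHT(s)$ has a $\beta$ (impossible here since there are no $\beta$'s), or when $\RIGHT(s)$ has an $\alpha$ or $\gamma$ and $\BELOW(s)$ has an $\alpha$ or $\delta$. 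Since there are no $\alpha$'s, the only way to get a $u$ is $\RIGHT(s)$ a $\gamma$ and $\BELOW(s)$ a $\delta$; so the extremal condition says exactly that this configuration never occurs. I would translate this into the language of alternative/Catalan tableaux, as the text hints with Figure~\ref{fig:alt_tab}: replace each $\gamma$ by a left-pointing arrow and each $\delta$ by an up-pointing arrow (or the appropriate convention), collapse the empty columns/rows, and check that the resulting object is exactly a Catalan tableau (alternative tableau) in the sense of Viennot \cite{Vi}.

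Next I would set up a bijection $\CT(n) \to$ (Catalan tableaux of a smaller size, or directly to Dyck paths) and track the statistic $\blk(T)$ through it. The quantity $\blk(T)$ counts $\alpha$'s and $\delta$'s on the diagonal; since there are no $\alpha$'s, it is just the number of $\delta$'s on the diagonal. Under the bijection to alternative tableaux this becomes a natural statistic — likely the number of ``free rows'' or the number of up-steps of a certain type in the associated Dyck path. The key point is that the signed sum $\sum_T (-1)^{\blk(T)}$ should, after the bijection, become a signed enumeration of Dyck paths (or alternative tableaux) by this statistic, and I expect this signed sum to evaluate — via a known Catalan-number identity or a simple sign-reversing involution on the paths pairing up steps — to $\pm\Cat(n/2)$, with the power of $i^n$ supplying the correct sign and also killing the odd case (when $n$ is odd, the answer must be $0$, and indeed $i^n$ is not real, so the sum over $\CT(n)$ must itself vanish by a separate parity argument).

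The main obstacle will be the sign bookkeeping and the odd-$n$ vanishing. Concretely I would: (1) prove $\CT(n)=\emptyset$-in-a-signed-sense for odd $n$, i.e. exhibit a fixed-point-free sign-reversing involution on $\CT(n)$ when $n$ is odd (equivalently show $\sum (-1)^{\blk(T)}=0$ there), perhaps by toggling a corner $\delta$; (2) for even $n=2k$, show via the alternative-tableau bijection and possibly a second involution that all surviving contributions have the same sign and total $\Cat(k)$, so that $i^{2k}(-1)^{\blk} $ summed gives $(-1)^k \cdot (\pm) = \Cat(k)$ after reconciling with $i^n$. I would lean on the standard enumeration of alternative tableaux of a given shape by Catalan numbers (Viennot) and on the fact that the refined ``number of diagonal $\delta$'s'' statistic has a clean generating function there. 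The delicate part is making sure the collapsing of empty rows and columns is a well-defined bijection and that no information about $\blk$ is lost in the collapse; once that is pinned down, the rest is a short computation with $q$-free Catalan identities, so I would not belabor it.
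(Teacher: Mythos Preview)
Your plan is essentially the paper's own argument. The bijection you describe from $\CT(n)$ to Catalan (alternative) tableaux is exactly Lemma~\ref{lem:CT}, and your observation that ``no $u$-labels'' $\Leftrightarrow$ ``no free cells'' is the content of the discussion preceding it; note only that the paper's convention is $\gamma\mapsto$ up arrow and $\delta\mapsto$ left arrow, the reverse of your first guess, and that under it $\blk(T)$ becomes the number of \emph{columns} (equivalently $n$ minus the number of rows) of the resulting Catalan tableau.

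Where you remain vague --- the ``clean generating function'' for the statistic and the ``known Catalan-number identity or sign-reversing involution'' --- the paper is concrete: the number of Catalan tableaux of size $n$ with $k$ rows is the Narayana number $\nara(n+1,k+1)$ (Lemma~\ref{lem:burstein}, due to Burstein), so the sum in \eqref{eq:4} becomes $i^n\sum_k(-1)^k\nara(n+1,n+1-k)$, and the alternating sum of Narayana numbers is evaluated by the Bonin--Shapiro--Simion identity (Lemma~\ref{lem:altsum}). This single identity also disposes of the odd-$n$ case uniformly, so your separate parity involution is unnecessary. Your outline is correct; to complete it you just need to name these two ingredients rather than search for an ad hoc involution.
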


In order to prove the above theorem we recall alternative tableaux, which are in
simple bijection with permutation tableaux, see \cite{Nadeau,Viennota}.


An \emph{alternative tableau} is a filling of a Young diagram with possibly
empty rows and columns with up arrows and left arrows such that there is no
arrow pointing to another arrow. The \emph{size} of an alternative tableau is
the sum of the number of rows and columns.  A cell is called a \emph{free cell}
if there is no arrow pointing to this cell. An alternative tableau without free
cells is called a \emph{Catalan tableau}.  Viennot \cite{Viennot} showed that
the number of Catalan tableaux of size $n$ is be equal to $\Cat(n)$.

Suppose $T\in\T(n)$ satisfies $A(T)=B(T)=0$. Then nonempty cells of $T$ have
only $\gamma$ and $\delta$. We replace every $\gamma$ with an up arrow and every
$\delta$ with a left arrow. Then by definition every diagonal cell contains an
arrow and there is no arrow pointing to another arrow.  For each diagonal cell
we remove the column (resp.~row) containing it if it has an up arrow (resp.~left
arrow). Then the resulting object is an alternative tableau. If $\blk(T)=k$ then
the corresponding alternative tableau has $n-k$ rows and $k$ columns, see
Figure~\ref{fig:alt_tab}. If moreover $T$ satisfies
$C(T)+D(T)+E(T)=\binom{n+1}2$, thus $T\in\CT(n)$, then the corresponding
alternative tableau has no free cells, i.e., it is a Catalan tableau.  Thus we
obtain the following lemma.

\begin{lem}\label{lem:CT}
  The number of $T\in \CT(n)$ with $\blk(T)=k$ is equal to the number of Catalan
  tableaux of size $n$ with $n-k$ rows.
\end{lem}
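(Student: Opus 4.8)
The plan is to promote the construction preceding the lemma to an honest bijection. Write $\Phi$ for the map defined there: given $T\in\T(n)$ with no $\alpha$'s and $\beta$'s, replace every $\gamma$ by an up arrow and every $\delta$ by a left arrow, then delete from the staircase the column of each diagonal up arrow and the row of each diagonal left arrow and contract. The excerpt already records that $\Phi(T)$ is an alternative tableau of size $n$, that $\blk(T)=k$ forces $\Phi(T)$ to have $n-k$ rows and $k$ columns, and that $T\in\CT(n)$ implies $\Phi(T)$ is a Catalan tableau. So the lemma follows once we know that $\Phi$ restricts to a \emph{bijection} from $\CT(n)$ onto the Catalan tableaux of size $n$, and for this I would exhibit a two-sided inverse $\Psi$.

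To build $\Psi$, the structural point to extract is that the diagonal cell in row $i$ of the staircase is the same cell as the diagonal cell in column $n+1-i$, and $\Phi$ deletes exactly one of these two lines — the column if the entry is $\gamma$, the row if it is $\delta$. Moreover, the emptiness conditions in the definition of a staircase tableau force every non-diagonal $\gamma$ or $\delta$ of $T$ to sit in a surviving row and a surviving column, and force every cell of a deleted line to be empty. Hence the word $d_1\cdots d_n\in\{\gamma,\delta\}^n$ read down the diagonal of $T$ records precisely which rows ($d_i=\gamma$) and columns ($d_i=\delta$) survive, and — reading $\gamma$ as a unit up-step and $\delta$ as a unit left-step — this word is exactly the lattice path bounding the Young diagram of $\Phi(T)$ on the south-east, traversed from its north-east corner to its south-west corner. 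Given a Catalan tableau $\tau$ of size $n$, $\Psi(\tau)$ is then forced: fill the diagonal of the size-$n$ staircase with the $\gamma/\delta$-translate of the south-east boundary path of $\tau$; delete rows and columns accordingly; identify the surviving subdiagram with $\tau$ and copy its arrows back as $\gamma$'s and $\delta$'s; leave every other cell empty.

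It is then routine to check that $\Psi(\tau)\in\T(n)$ with no $\alpha$'s or $\beta$'s — the ``no arrow points to another arrow'' property of $\tau$ becomes the column and row emptiness conditions, and every diagonal cell is filled — and that $\Phi\circ\Psi$ and $\Psi\circ\Phi$ are the identity, granted the boundary-path description. Two points, however, need real proof and constitute the main obstacle. The first is that boundary-path identity itself: that deleting the prescribed rows and columns of the staircase and contracting yields a Young diagram whose south-east boundary is the $\gamma/\delta$-translate of the diagonal word; this is a bookkeeping lemma, cleanly handled by induction on $n$ by peeling off the outermost diagonal cell. The second, and the harder of the two, is the equivalence ``$T$ has no cell labeled $u$'' $\iff$ ``$\Phi(T)$ has no free cell'' (recall that, since $A(T)=B(T)=0$, the condition $C(T)+D(T)+E(T)=\binom{n+1}{2}$ is exactly the absence of $u$-labels, and a Catalan tableau is exactly one with no free cell). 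This requires matching the local labeling rule for $u$ — namely $\RIGHT(s)$ holds a $\gamma$ and $\BELOW(s)$ holds a $\delta$ — against the local free-cell rule — no left arrow to the right of the cell in its row and no up arrow below it in its column. The delicate part is that a $u$-labeled cell of $T$ need not itself map to a free cell of $\Phi(T)$: one must follow the witnessing $\gamma$ up its column and the witnessing $\delta$ along its row to exhibit a free cell, and run the argument in the other direction for the converse, again organized as an induction on $n$. Granting these two points, $\Phi$ restricts to a bijection $\{T\in\CT(n):\blk(T)=k\}\to\{\text{Catalan tableaux of size }n\text{ with }n-k\text{ rows}\}$, which is the lemma.
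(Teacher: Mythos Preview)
Your approach is exactly the paper's: the paragraph preceding the lemma describes the map $\Phi$ and asserts the needed properties, leaving bijectivity implicit, and you have supplied the explicit inverse $\Psi$ together with a correct outline of the checks.

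One simplification: the point you flag as ``harder'' is in fact direct, with no induction or witness-chasing needed. A $u$-labeled cell of $T$ always lies in a surviving row and a surviving column, because if row $i$ is deleted then the diagonal $\delta$ at $(i,n{+}1{-}i)$ forces every empty cell in that row to have $\RIGHT(s)$ equal to that $\delta$ (hence label $q$), and symmetrically for deleted columns. For a surviving empty cell $(i,j)$, the staircase rules forbid a $\delta$ to the right of a $\gamma$ in row $i$ and a $\gamma$ below a $\delta$ in column $j$; together with the fact that the diagonal entries in row $i$ and column $j$ are $\gamma$ and $\delta$ respectively, this makes ``$\RIGHT(s)$ has $\gamma$ and $\BELOW(s)$ has $\delta$'' equivalent, cell by cell, to ``no left arrow to the right and no up arrow below'' in $\Phi(T)$. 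Thus the $u$-labeled cells correspond bijectively to the free cells, and the equivalence $T\in\CT(n)\Leftrightarrow\Phi(T)$ Catalan follows immediately.
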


The following lemma is proved by Burstein \cite[Theorem~3.1]{Burstein2007} using
the permutation tableaux  description.

\begin{lem}
\label{lem:burstein}
There is a bijection between the set of Catalan tableaux of size $n$ with $k$
rows and the set of noncrossing partitions of $\{1,2,\dots,n+1\}$ with $k$ blocks.
\end{lem}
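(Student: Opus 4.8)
The plan is to construct an explicit bijection and then check that it carries the number of rows to the number of blocks. The statement is a Narayana-level refinement: on the left the Catalan tableaux of size $n$ are graded by their number of rows, and on the right the noncrossing partitions of $\{1,2,\dots,n+1\}$ are graded by their number of blocks, the latter grading being exactly the Narayana numbers. Hence the real content is the row-by-block matching rather than any coarse count. Since the paper has already recorded that alternative tableaux are in simple bijection with permutation tableaux \cite{Nadeau,Viennota}, I am free to work in whichever model is most convenient and to transport the statistic ``number of rows'' along that dictionary; this is exactly the setting of Burstein's \cite[Theorem~3.1]{Burstein2007}.

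My preferred, self-contained route factors the bijection through Dyck paths. First I would encode a noncrossing partition of $\{1,2,\dots,n+1\}$ as a Dyck path of semilength $n+1$ by the classical correspondence under which the number of blocks equals the number of peaks; this is the standard combinatorial model of the Narayana numbers, and I would take it as known. Next I would read a Dyck path off a Catalan tableau: the lower-right boundary of the shape is already a monotone lattice path with one south step per row and one west step per column, and the requirement that there be no free cells rigidifies the arrow filling enough to force this boundary---after the usual closing-up step, which is what produces the shift from $n$ to $n+1$---to be a Dyck path. I would arrange the reading so that each row of the tableau contributes exactly one peak, so that the number of rows equals the number of peaks. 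Composing the two encodings then matches a Catalan tableau with $k$ rows to a noncrossing partition with $k$ blocks.

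The step I expect to be the main obstacle is this second one: proving that the boundary-plus-arrow reading of a Catalan tableau is always a Dyck path and that its peaks are counted precisely by its rows. This is where the no-free-cell condition must be used in an essential way, and where the off-by-one shift from size $n$ to the ground set $\{1,2,\dots,n+1\}$ has to be pinned down; the noncrossing-partition encoding and the final composition are routine once this is settled. As a fallback that avoids reconstructing the Dyck-path dictionary, I would follow Burstein directly: translate Catalan tableaux into the corresponding permutation tableaux, check that ``number of rows'' becomes the matching statistic there, and invoke \cite[Theorem~3.1]{Burstein2007}. Should an explicit map prove awkward, a third option is a recursive bijection driven by the Narayana recurrence: one verifies that Catalan tableaux of size $n$ with $k$ rows satisfy the same recurrence and initial conditions as noncrossing partitions of $\{1,2,\dots,n+1\}$ with $k$ blocks, which upgrades to a bijection by induction.
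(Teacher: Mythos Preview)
The paper does not prove this lemma: it states it and attributes it to Burstein \cite[Theorem~3.1]{Burstein2007}. Your fallback---translate Catalan tableaux to permutation tableaux and invoke Burstein---is therefore exactly what the paper does, and suffices here.

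Your preferred Dyck-path route, however, has a real gap at the step you correctly flag as the obstacle. The south-east boundary of a Catalan tableau is a lattice path of length $n$ with $k$ south steps and $n-k$ west steps; it records only the shape and forgets the arrow filling entirely, so it cannot by itself separate the several Catalan fillings of a single shape. Nor does the no-free-cell condition force this boundary to lie on one side of a diagonal: the Catalan tableau on the right of Figure~\ref{fig:alt_tab} has boundary word $WSWSWSSWW$, which dips below the diagonal under either identification of $S,W$ with up/down. And the target Dyck paths---those encoding noncrossing partitions of $\{1,\dots,n+1\}$---have semilength $n+1$, i.e.\ $2n+2$ steps, so a single ``closing-up step'' cannot bridge the length mismatch. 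The known bijections (Viennot's via the canopy of a binary tree, or Burstein's via permutation tableaux) build the Dyck word from the arrow data, not from the shape alone. To carry out your first plan you would have to replace ``read the boundary'' by a genuine encoding of the arrows and then verify the peak count; absent that, the only complete argument among your three options is the Burstein citation, which coincides with the paper's.
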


Since the number of noncrossing partitions of $\{1,2,\dots,n\}$ with $k$ blocks
is the Narayana number $\nara(n,k) =
\frac{1}{n}\binom{n}{k} \binom{n}{k-1}$, by Lemmas~\ref{lem:CT} and
\ref{lem:burstein} we obtain 
\begin{equation}
  \label{eq:5}
\sum_{T\in\CT(n)} i^{n} (-1)^{\blk(T)} =
i^n \sum_{k=0}^{n} (-1)^k \nara(n+1,n+1-k).
\end{equation}

There is a formula for the alternating sum of Narayana numbers. 
\begin{lem}\cite[Proposition~2.2 corrected]{Bonin1993} \label{lem:altsum}
\[
\sum_{k=1}^n (-1)^k \nara(n,k) = (-1)^{\frac{n+1}2} \Cat\left(\frac{n-1}2\right).
\]
\end{lem}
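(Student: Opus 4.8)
The plan is to recognize the alternating sum $\sum_{k=1}^n(-1)^k\nara(n,k)$ as the value at $t=-1$ of the Narayana polynomial $N_n(t)=\sum_{k=1}^n\nara(n,k)\,t^k$, and then to read off that value from the generating function of the $N_n(t)$. Recall that $\nara(n,k)$ counts Dyck paths of semilength $n$ with exactly $k$ peaks. I would first set $N(x,t)=\sum_{n\ge1}N_n(t)\,x^n$ and derive, from the first-return decomposition of a Dyck path, the quadratic functional equation
\[
xN^2+\big(x(1+t)-1\big)N+tx=0
\]
(alternatively this can be checked directly from $\nara(n,k)=\tfrac1n\binom nk\binom n{k-1}$ via a Vandermonde-type identity). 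Solving for the root with $N(0,t)=0$ gives
\[
N(x,t)=\frac{1-x(1+t)-\sqrt{\big(1-x(1+t)\big)^2-4tx^2}}{2x}.
\]

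Next I would put $t=-1$: the linear term in the radicand cancels, leaving
\[
N(x,-1)=\frac{1-\sqrt{1+4x^2}}{2x}.
\]
The key step is to relate the right-hand side to the Catalan generating function through the substitution $u=-x^2$ in the classical identity $1-\sqrt{1-4u}=2u\sum_{m\ge0}\Cat(m)\,u^m$. This gives
\[
1-\sqrt{1+4x^2}=-2x^2\sum_{m\ge0}(-1)^m\Cat(m)\,x^{2m}=2\sum_{m\ge0}(-1)^{m+1}\Cat(m)\,x^{2m+2},
\]
and hence $N(x,-1)=\sum_{m\ge0}(-1)^{m+1}\Cat(m)\,x^{2m+1}$.

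Finally I would extract the coefficient of $x^n$. When $n=2m+1$ this coefficient is $(-1)^{m+1}\Cat(m)$, and when $n$ is even it is $0$. Writing $m=(n-1)/2$ one has $(-1)^{m+1}=(-1)^{(n+1)/2}$ and $\Cat(m)=\Cat\big((n-1)/2\big)$; since $\Cat$ of a non-integer argument is declared to be $0$, the even case is automatically included, and we arrive at exactly $\sum_{k=1}^n(-1)^k\nara(n,k)=(-1)^{(n+1)/2}\Cat\big((n-1)/2\big)$. I do not expect a genuine obstacle here: the only places that require care are the derivation (or citation) of the functional equation for $N(x,t)$, and the sign bookkeeping in the substitution $u=-x^2$, which is precisely why the Catalan numbers appear only in odd degrees and with the sign $(-1)^{(n+1)/2}$.
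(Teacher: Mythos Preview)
Your argument is correct. The functional equation $xN^2+(x(1+t)-1)N+tx=0$ for $N(x,t)=\sum_{n\ge1}N_n(t)x^n$ follows exactly as you say from the first-return decomposition of a Dyck path (the only delicate point being that the elevated factor $UP_1D$ contributes a peak precisely when $P_1$ is empty), the choice of branch is the one forcing $N(0,t)=0$, and the substitution $u=-x^2$ into $1-\sqrt{1-4u}=2u\sum_{m\ge0}\Cat(m)u^m$ together with the sign bookkeeping $(-1)^{(n-1)/2+1}=(-1)^{(n+1)/2}$ yields the stated identity, the even case being absorbed by the convention $\Cat(\text{non-integer})=0$.

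As for comparison with the paper: the paper does not supply its own proof of this lemma. It is quoted as a known result from Bonin--Shapiro--Simion \cite[Proposition~2.2 corrected]{Bonin1993} and used as a black box in the proof of Theorem~\ref{thm:highest}. Your generating-function argument is a standard and self-contained way to establish it; it is essentially the approach one finds in the Bonin--Shapiro--Simion paper itself (they work with Schr\"oder and Narayana generating functions and specialize parameters). So there is no discrepancy to flag: you have simply filled in a cited lemma with a clean proof.
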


Now we can prove Theorem~\ref{thm:highest}.

\begin{proof}[Proof of Theorem~\ref{thm:highest}]
By \eqref{eq:4} and \eqref{eq:5} we have  
\[
\left[ a^n b^n c^n d^n q^{\binom n2} \right] 2^n(abcd;q)_n \mu_n(a,b,c,d;q)=
i^n \sum_{k=1}^{n+1} (-1)^{n+1-k} \nara(n+1,k). 
\]
Then we are done by Lemma~\ref{lem:altsum}. 
\end{proof}

By analyzing the Catalan tableaux one can prove the following.  We will omit the
proofs.
\begin{thm}\label{thm:23}
We have
  \begin{align*}
    \left[ a^{n-1} b^n c^n d^n q^{\binom n2} \right] 2^n(abcd;q)_n \mu_n(a,b,c,d;q)
    &= -\Cat\left(\frac {n+1}2\right),\\
    \left[ a^{n-1} b^{n-1} c^n d^n q^{\binom n2} \right] 2^n(abcd;q)_n \mu_n(a,b,c,d;q)
    &= \Cat\left(\frac {n+2}2\right) - \Cat\left(\frac {n}2\right).
  \end{align*}
\end{thm}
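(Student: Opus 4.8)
The plan is to imitate the proof of Theorem~\ref{thm:highest}: read off the relevant coefficients from Proposition~\ref{prop:imagmom} and reduce everything to a signed count over the Catalan-type tableaux $\CT(n)$. First I would pin down which $T\in\T(n)$ can possibly contribute to $\left[a^{n-1}b^nc^nd^nq^{\binom n2}\right]$ or to $\left[a^{n-1}b^{n-1}c^nd^nq^{\binom n2}\right]$ in that formula. The parameters $a,c$ enter only through $(ac)^{C(T)}\big((1+ai)(1+ci)\big)^{n-A(T)-C(T)}$, so the top power of $c$ available is $n-A(T)$, and likewise the top powers of $b$ and $d$ are $n-B(T)$; hence to see $b^nc^nd^n$ we must take $A(T)=B(T)=0$. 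For such $T$ every diagonal cell carries a $\gamma$ or a $\delta$, so $C(T)+D(T)\ge n$, the factor $(1-q)^{C(T)+D(T)-n}$ is a genuine polynomial, and since $E(T)\le\binom{n+1}2-C(T)-D(T)$ the top power of $q$ coming from $q^{E(T)}(1-q)^{C(T)+D(T)-n}$ is $\binom n2$, attained precisely when $E(T)$ is maximal (no empty cell labelled $u$) and one keeps the leading term $(-1)^{C(T)+D(T)-n}$ of the $(1-q)$ factor. So only $T\in\CT(n)$ survive. Lowering the $a$-degree from $n$ to $n-1$ then forces the subleading term of $(1+ai)^{n-C(T)}$, contributing a factor $n-C(T)$, and lowering the $b$-degree similarly contributes a factor $n-D(T)$.

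Doing the (entirely mechanical) bookkeeping of the powers of $i$, which collapse exactly as in Theorem~\ref{thm:highest}, would give
\begin{align*}
\left[a^{n-1}b^nc^nd^nq^{\binom n2}\right] 2^n(abcd;q)_n\mu_n(a,b,c,d;q) &= i^{\,n-1}\sum_{T\in\CT(n)}(-1)^{\blk(T)}\,(n-C(T)),\\
\left[a^{n-1}b^{n-1}c^nd^nq^{\binom n2}\right] 2^n(abcd;q)_n\mu_n(a,b,c,d;q) &= i^{\,n}\sum_{T\in\CT(n)}(-1)^{\blk(T)}\,(n-C(T))(n-D(T)).
\end{align*}
The involution on $\CT(n)$ given by transposing the staircase and swapping $\gamma\leftrightarrow\delta$ exchanges $C(T)$ with $D(T)$ and sends $\blk(T)$ to $n-\blk(T)$; it immediately shows that both sums vanish for odd $n$, matching $\Cat$ of a half-integer. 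For the rest one tracks $C(T)$ and $D(T)$ through the bijections of Lemmas~\ref{lem:CT} and~\ref{lem:burstein}. A diagonal $\delta$ empties the rest of its row and there is at most one $\gamma$ per column, so every non-diagonal $\gamma$ lies in a surviving row and a surviving column and becomes an up arrow; thus $n-C(T)$ equals the number of columns of the associated Catalan tableau carrying no up arrow, and dually $n-D(T)$ equals the number of its rows carrying no left arrow. Under Burstein's bijection these become two explicit statistics on noncrossing partitions of $\{1,\dots,n+1\}$, and the two displayed sums turn into weighted alternating sums of Narayana-type refinements of Lemma~\ref{lem:altsum}; evaluating those refined alternating sums yields $-\Cat\left(\frac{n+1}2\right)$ and $\Cat\left(\frac{n+2}2\right)-\Cat\left(\frac n2\right)$ respectively.

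The genuine obstacle is this last step — the part the authors summarise as ``analyzing the Catalan tableaux''. Everything up to the two displayed identities is routine coefficient extraction paralleling Theorem~\ref{thm:highest}; the work lies in identifying exactly which statistics of noncrossing partitions correspond to $n-C(T)$ and $n-D(T)$ and then proving the accompanying refined alternating-Narayana evaluations, after first checking the small cases $n=1,2$ (where one gets $-\Cat(1)=-1$ and $0$ for the first formula, and $0$ and $\Cat(2)-\Cat(1)=1$ for the second) as consistency tests.
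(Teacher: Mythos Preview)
The paper actually omits this proof entirely --- it says only ``By analyzing the Catalan tableaux one can prove the following. We will omit the proofs.'' So there is no detailed argument to compare against; your approach (extract coefficients from Proposition~\ref{prop:imagmom}, reduce to a weighted sum over $\CT(n)$, then pass to Catalan tableaux / noncrossing partitions) is exactly the route the paper gestures at. Your two displayed identities
\[
i^{\,n-1}\sum_{T\in\CT(n)}(-1)^{\blk(T)}(n-C(T)),\qquad
i^{\,n}\sum_{T\in\CT(n)}(-1)^{\blk(T)}(n-C(T))(n-D(T))
\]
are correct (the bookkeeping of the $i$-powers checks out), and your identification of $n-C(T)$ and $n-D(T)$ with the number of arrow-free columns and rows of the associated Catalan tableau is also right.

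There is one slip in the parity argument. The transposition involution does kill the \emph{second} sum for odd $n$, since the weight $(n-C)(n-D)$ is symmetric under $C\leftrightarrow D$. But it does \emph{not} kill the first sum for odd $n$: applying it turns $\sum(-1)^{\blk}(n-C)$ into $(-1)^n\sum(-1)^{\blk}(n-D)$, which is a different sum. In fact the first formula should vanish for \emph{even} $n$ (since $\Cat((n+1)/2)=0$ then), not odd $n$; the correct reason is that the moment coefficient is a real integer while $i^{\,n-1}$ is purely imaginary for even $n$, forcing $\sum(-1)^{\blk}(n-C)=0$.

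Beyond that, you have correctly located where the real content is: evaluating the two weighted alternating sums over Catalan tableaux. Since the paper does not carry this out either, your proposal is as complete as the paper's own treatment. To finish, one natural route is to mark an arrow-free column (respectively an arrow-free column and an arrow-free row) and set up a bijection enlarging the tableau, so that the weighted sum over $\CT(n)$ becomes an unweighted alternating sum over $\CT(n+1)$ (respectively involves $\CT(n+2)$), at which point Lemma~\ref{lem:altsum} applies directly; this is presumably the ``analysis of Catalan tableaux'' the authors had in mind.
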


\section{Matchings}

In this section we provide the third combinatorial approach to compute
$\mu_n(a,b,c,d;q),$ using matchings and the $q$-Hermite polynomials as in
\cite{ISV}.  We give a combinatorial expression for the non-normalized moments
in Theorem~\ref{thm:match}. We derive new expressions for $\mu_n(a,b,c,d;q)$ as
a rational function in Theorem~\ref{thm:main} and Theorem~\ref{thm:main2}, and a
new positivity result in Corollary~\ref{cor:flipcor}.

The approach of \cite{ISV} was to realize the Askey-Wilson integral 
\eqref{eq:AW_integral} as a generating function for an integral of a product of 
four $q$-Hermite polynomials
\begin{equation}
I_0(t_1,t_2,t_3,t_4)=\frac{(q)_\infty}{2\pi} \sum_{n_1,n_2,n_3,n_4\ge 0} 
\int_{0}^{\pi} 
\prod_{i=1}^4 \frac{H_{n_i}(\cos\theta|q)t_i^{n_i}}{(q)_{n_i}}  v(\cos\theta|q) d\theta.
\nonumber
\end{equation}
See \cite{ISV} for the definitions of $H_n(\cos\theta;q)$ and
$v(\cos\theta|q)$.

Put
\begin{equation}
\label{eq:I_n}
\begin{aligned}
I_n(t_1,t_2,t_3,t_4):= &\frac{(q)_\infty}{2\pi} \int_{0}^\pi (\cos\theta)^n 
w(\cos\theta,t_1,t_2,t_3,t_4;q) d\theta\\
=& \frac{(q)_\infty}{2\pi} \sum_{n_1,n_2,n_3,n_4\ge 0} 
\int_{0}^{\pi} (\cos\theta)^n
\prod_{i=1}^4 \frac{H_{n_i}(\cos\theta|q)t_i^{n_i}}{(q)_{n_i}}  v(\cos\theta|q) d\theta,
\end{aligned}
\end{equation}
so that
\begin{equation}
\label{inttomom}
I_n(a,b,c,d)=\mu_n(a,b,c,d;q)I_0(a,b,c,d).
\end{equation}
Upon rescaling, the Askey-Wilson integral \eqref{eq:AW_integral} followed from a
combinatorial evaluation of the integral of a product of four $q$-Hermite
polynomials, see \eqref{eq:7}.

The \emph{rescaled $q$-Hermite polynomials} $\RH_n(x|q)$ (see \cite{ISV}) are
defined in terms of the $q$-Hermite polynomials by
\[
\RH_n(x|q)= \frac{H_n(\frac{\sqrt{1-q}}2 x|q)}{(1-q)^{n/2}}.
\]
Let 
\begin{equation}
  \label{eq:tf}
\tf_n(n_1,n_2,n_3,n_4;q) = 
\frac{(q)_\infty}{2\pi}
\int_{-2/\sqrt{1-q}}^{2/\sqrt{1-q}} 
x^n\RH_{n_1}(x|q) \RH_{n_2}(x|q) \RH_{n_3}(x|q) \RH_{n_4}(x|q) \tv(x|q) dx,
\end{equation}
where 
\[
\tv(x|q) = \frac{v(\frac{\sqrt{1-q}}2 x|q) \cdot \frac{\sqrt{1-q}}2}{\sqrt{1-(1-q)x^2/4}}.
\]
By \eqref{eq:I_n} and \eqref{eq:tf} we have
\begin{equation}
  \label{eq:I_n2}
I_n(a,b,c,d) = \left(\frac{\sqrt{1-q}}2\right)^n \sum_{n_1,n_2,n_3,n_4\ge0} 
\frac{\ta^{n_1} \tb^{n_2} \tc^{n_3} \td^{n_4}}{[n_1]_q![n_2]_q![n_3]_q![n_4]_q!}
\tf_n(n_1,n_2,n_3,n_4;q),
\end{equation}
where $\ta = a/\sqrt{1-q}$, $\tb = b/\sqrt{1-q}$, $\tc = c/\sqrt{1-q}$, $\td =
d/\sqrt{1-q}$.  The rescaled $q$-Hermite polynomials have a close connection
with matchings. We need the following definitions.
 
A \emph{matching} is a set partition of $\{1,2,\dots,n\}$ in which every block
has size $1$ or $2$. A block of size $2$ is called an \emph{edge} and a block of
size $1$ is called a \emph{fixed point}.  A \emph{complete matching} is a
matching without fixed points. For a matching $\pi$ we define the \emph{crossing
  number} $\cro(\pi)$ to be the number of pairs $(e_1,e_2)$ of blocks
$e_1,e_2\in \pi$ such that $e_1=\{i_1,j_1\}, e_2=\{i_2,j_2\}$ with
$i_1<i_2<j_1<j_2$ or $e_1=\{i_1,j_1\}, e_2=\{j_2\}$ with $i_1<i_2<j_1$.  

For fixed integers $n_0,n_1,n_2,\dots,n_k$, let
$S_i=\{m_{i-1}+1,m_{i-1}+2,\dots,m_i\}$ for $i=0,1,2,\dots,k$, where $m_{-1}=0$
and $m_i = n_0+n_1+\dots+n_i$ for $i=0,1,\dots,k$.  We define
$\CM(n_0;n_1,n_2,\dots,n_k)$ to be the set of complete matchings $\pi$ on
$\bigcup_{i=0}^k S_i$ such that if an edge of $\pi$ is contained in $S_i$, then
$i=0$.


Ismail, Stanton and Viennot \cite[Theorem~3.2]{ISV} showed that
\begin{equation}
  \label{eq:7}
\tf_0(n_1,n_2,n_3,n_4;q) = \sum_{\sigma\in \CM(0;n_1,n_2,n_3,n_4)} q^{\cro(\sigma)}.  
\end{equation}


We have, using the same arguments as in \cite[Theorem~3.2]{ISV}, an 
analogous result for $\tf_n(n_1,n_2,n_3,n_4;q).$

\begin{thm}
\label{thm:match}
We have
\[
\tf_n(n_1,n_2,n_3,n_4;q) = \sum_{\sigma\in \CM(n;n_1,n_2,n_3,n_4)} q^{\cro(\sigma)}.
\]
\end{thm}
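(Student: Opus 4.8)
The plan is to mimic the combinatorial proof of \eqref{eq:7} in \cite[Theorem~3.2]{ISV}, tracking the extra factor $(\cos\theta)^n$ through the linearization argument. First I would recall the linearization mechanism behind \eqref{eq:7}: expanding each $\RH_{n_i}(x|q)$ via its combinatorial model (a sum over ways of drawing edges and fixed points on $n_i$ points, weighted by $q^{\cro}$), the integral $\tf_0(n_1,n_2,n_3,n_4;q)$ collapses — by the orthogonality/moment relation for rescaled $q$-Hermite polynomials, i.e. $\frac{(q)_\infty}{2\pi}\int x^m \tv(x|q)\,dx$ equals the number of complete matchings of $[m]$ weighted by $q^{\cro}$ — to a sum over complete matchings of $S_1\cup S_2\cup S_3\cup S_4$ having no edge within any single $S_i$. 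That last constraint is exactly the definition of $\CM(0;n_1,n_2,n_3,n_4)$, and the crossing statistics add up correctly because the points of $S_1,\dots,S_4$ are linearly ordered.

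The only change in $\tf_n$ versus $\tf_0$ is the insertion of $x^n$ in the integrand of \eqref{eq:tf}. The key step is therefore to handle $x^n$ combinatorially: I would treat $x^n$ as $\RH$-type data on $n$ additional ``free'' points, precisely the points of $S_0=\{1,\dots,n\}$ in the notation set up before the theorem (with $m_{-1}=0$, $m_0=n$, and $S_i$ shifted accordingly for $i\ge 1$). Since $x^n$ is the $\nth$ power of the variable rather than an $\nth$ rescaled $q$-Hermite polynomial, on these $n$ points we allow \emph{all} pairings — both edges within $S_0$ and edges joining $S_0$ to the other blocks — whereas on $S_1,\dots,S_4$ we still forbid internal edges. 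After multiplying out all five factors $x^n\RH_{n_1}\cdots\RH_{n_4}$ and integrating against $\tv(x|q)$, the same rescaled $q$-Hermite moment identity forces the surviving diagrams to be complete matchings of $S_0\cup S_1\cup\cdots\cup S_4$ with $q^{\cro}$ weight, subject only to the restriction that any edge contained in a single $S_i$ has $i=0$. This is precisely $\CM(n;n_1,n_2,n_3,n_4)$, giving the claimed formula.

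I would then verify that the crossing statistic is preserved under this bookkeeping: the exponent of $q$ produced by the linearization/moment step is the number of crossings of the resulting complete matching (including crossings of edges with fixed points, as in the definition of $\cro$), and the linear order on $S_0\cup S_1\cup\cdots\cup S_4$ inherited from $\{1,2,\dots\}$ makes this count additive over the contributions of the individual factors, exactly as in \cite{ISV}. Finally I would remark that setting $n=0$ recovers \eqref{eq:7}, since $\CM(0;n_1,n_2,n_3,n_4)$ has $S_0=\emptyset$.

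The main obstacle is making the ``$x^n$ as $n$ unrestricted points'' step fully rigorous: one must check that expanding $x^n$ in the $\RH$-basis and then applying orthogonality yields the same over-all weighted sum as simply allowing arbitrary pairings on $S_0$, i.e. that the telescoping of $\RH$-coefficients against moments reproduces precisely the $q^{\cro}$-enumeration of complete matchings with the stated edge restriction. This is exactly the content of the ``same arguments as in \cite[Theorem~3.2]{ISV}'' remark, and in the write-up I would either cite that computation verbatim with the obvious modification, or spell out the single induction on $n_1+n_2+n_3+n_4+n$ that drives it; everything else is routine.
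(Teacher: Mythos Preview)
Your proposal is correct and matches the paper's approach exactly: the paper does not give a proof beyond the one-line remark ``using the same arguments as in \cite[Theorem~3.2]{ISV}'', and what you have sketched is precisely that argument with the obvious modification that the $x^n$ factor contributes an initial block $S_0$ of $n$ points on which internal edges are permitted. Your identification of the resulting set of matchings with $\CM(n;n_1,n_2,n_3,n_4)$ is the intended one.
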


Due to \eqref{inttomom} and \eqref{eq:I_n2}, one may use Theorem~\ref{thm:match}
to give another explicit formula for $\mu_n(a,b,c,d;q).$

We can use Theorem~\ref{thm:match} to find new explicit formulas for the
moments, and also explain the mixed binomial and $q$-binomial coefficients. The
reason is that the generating function for a crossing number of matchings always
has such a formula.

Let $\matching(n,m)$ denote the set of matchings on $\{1,2,\dots,n\}$ with $m$
fixed points. Note that $\matching(n,m)=\emptyset$ unless $n\equiv m \mod 2$.
Let 
\[
P(n,m)=\sum_{\pi\in\matching(n,m)} q^{\cro(\pi)}, \qquad 
\overline P(n,m) = (1-q)^{(n-m)/2} P(n,m).
\]
\JV. \cite[Proposition~5.1]{JV_rook} showed the following (see also
\cite[Proposition~15]{Cigler2011}): if $n\equiv m \mod 2$, we have
\begin{equation}
\label{eq:OP}  
\overline P(n,m) = 
\sum_{k\ge0} \left( \binom{n}{\frac{n-k}2} - \binom{n}{\frac{n-k}2-1}\right)
(-1)^{(k-m)/2} q^{\binom{(k-m)/2+1}2} 
\qbinom{\frac{k+m}2}{\frac{k-m}2}.
\end{equation}



\begin{thm}\label{thm:main}
We have
\[
2^n\mu_n(a,b,c,d;q) =  \sum_{\abcd,\ge0}\abcdpower \op(n,\abcd+)
\qbinom{\abcd+}{\abcd,} \frac{(ad)_{\bg}(ac)_{\beta}(bd)_{\gamma}}{(abcd)_\bg}, 
\]
where $\overline{P}$ is given by \eqref{eq:OP}.
\end{thm}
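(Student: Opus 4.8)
The plan is to combine the matching model of Theorem~\ref{thm:match} with a decomposition of matchings that separates the ``heights'' contribution (which produces $\op$) from a reduced matching problem on the four $q$-Hermite families.

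By \eqref{inttomom}, \eqref{eq:I_n2} and Theorem~\ref{thm:match},
\[
2^n\mu_n(a,b,c,d;q)\,I_0(a,b,c,d)=(1-q)^{n/2}\sum_{n_1,n_2,n_3,n_4\ge0}\frac{\ta^{n_1}\tb^{n_2}\tc^{n_3}\td^{n_4}}{[n_1]_q![n_2]_q![n_3]_q![n_4]_q!}\sum_{\sigma\in\CM(n;n_1,n_2,n_3,n_4)}q^{\cro(\sigma)}.
\]
First I would split each $\sigma\in\CM(n;n_1,n_2,n_3,n_4)$ into a pair $(\bar\pi,\tau)$: let $\bar\pi$ be the restriction of $\sigma$ to $S_0=\{1,\dots,n\}$, regarded as a matching in $\matching(n,m)$ whose $m$ fixed points are the elements of $S_0$ that $\sigma$ joins to $S_1\cup\dots\cup S_4$, and let $\tau$ be the restriction of $\sigma$ to $\operatorname{Fix}(\bar\pi)\cup S_1\cup\dots\cup S_4$, a complete matching with no edge inside $\operatorname{Fix}(\bar\pi)$ or inside any $S_i$. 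The key point is that the crossing statistic is additive: $\cro(\sigma)=\cro(\bar\pi)+\cro(\tau)$. This is immediate for crossings internal to $\bar\pi$ or internal to $\tau$; the only mixed crossings are between an edge $\{i,j\}\subseteq S_0$ of $\bar\pi$ and an edge $\{k,l\}$ of $\tau$ with $k\in\operatorname{Fix}(\bar\pi)$ and $l>n$, and such a pair crosses exactly when $i<k<j$ — i.e.\ exactly when $k$ is an edge-over-fixed-point crossing of $\bar\pi$. Since $S_1,\dots,S_4$ lie entirely to the right of $S_0$, the weighted count of the $\tau$'s attached to a given $\bar\pi$ depends only on $m=|\operatorname{Fix}(\bar\pi)|$; calling it
\[
h_m(n_1,n_2,n_3,n_4):=\sum_{\tau\in\CM(0;m,n_1,n_2,n_3,n_4)}q^{\cro(\tau)},
\]
we obtain $\sum_{\sigma\in\CM(n;n_1,\dots,n_4)}q^{\cro(\sigma)}=\sum_{m\ge0}P(n,m)\,h_m(n_1,n_2,n_3,n_4)$. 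Substituting this and recalling $\op(n,m)=(1-q)^{(n-m)/2}P(n,m)$, Theorem~\ref{thm:main} (after regrouping its $\alpha,\beta,\gamma,\delta$-sum as $\sum_m\sum_{\abcd+=m}$) follows once we prove, for each $m\ge0$, the parameter identity
\[
\sum_{n_1,n_2,n_3,n_4\ge0}\frac{\ta^{n_1}\tb^{n_2}\tc^{n_3}\td^{n_4}}{[n_1]_q![n_2]_q![n_3]_q![n_4]_q!}h_m(n_1,n_2,n_3,n_4)=(1-q)^{-m/2}\,I_0(a,b,c,d)\sum_{\abcd,\ge0,\ \abcd+=m}\abcdpower\qbinom{\abcd+}{\abcd,}\frac{(ad)_\bg(ac)_\beta(bd)_\gamma}{(abcd)_\bg}.
\]

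To prove this identity I would strip from $\tau\in\CM(0;m,n_1,n_2,n_3,n_4)$ the edges incident to the first block $F_0$ (of size $m$), recording how many of its $m$ points are sent to $S_1,S_2,S_3,S_4$ — say $\alpha,\beta,\gamma,\delta$ with $\abcd+=m$ — together with the chosen endpoints. What remains is a complete matching with no internal edges on four blocks of sizes $n_1-\alpha,n_2-\beta,n_3-\gamma,n_4-\delta$. Peeling $F_0$ one point at a time (from right to left, as in the block-insertion argument of \cite[Theorem~3.2]{ISV}), the order in which destinations are chosen contributes the $q$-multinomial $\qbinom{\abcd+}{\abcd,}$ and, after summing the geometric $\ta^{n_j}/[n_j]_q!$ series, the monomial $\abcdpower$; the residual four-block matching, summed against $\ta^{n_j}/[n_j]_q!$, evaluates to $I_0$ by the Askey--Wilson/$q$-Hermite integral of \cite{ISV}, but with its parameters shifted by the edges already removed, and those shifts are precisely what produce the factor $\tfrac{(ad)_\bg(ac)_\beta(bd)_\gamma}{(abcd)_\bg}$. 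Equivalently, the whole identity can be read off as the coefficient of the $m$-th power of a fifth parameter in the Nasrallah--Rahman integral \cite[(6.3.9), p.~158]{GR} — the integral of a product of five $q$-Hermite families — exactly as that integral is used in the proof of Theorem~\ref{thm:8W7}. Finally, \JV.'s formula~\eqref{eq:OP} supplies the explicit expression for $\op(n,m)=\overline P(n,m)$ appearing in the statement.

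I expect the last step to be the main obstacle: faithfully tracking $\cro$ through the removal of the $F_0$-edges so that the $q$-multinomial and the asymmetric ratio $\tfrac{(ad)_\bg(ac)_\beta(bd)_\gamma}{(abcd)_\bg}$ come out with exactly the right powers of $q$. The asymmetry — surprising since $\mu_n$ is symmetric in $a,b,c,d$ — is an artifact of stripping the blocks $S_1,S_2,S_3,S_4$ in a fixed left-to-right order, and verifying that the symmetric answer is nonetheless recovered is the delicate combinatorial point. By contrast, the additivity $\cro(\sigma)=\cro(\bar\pi)+\cro(\tau)$ of the first step is routine once the left-to-right placement of $S_0$ relative to $S_1,\dots,S_4$ is exploited.
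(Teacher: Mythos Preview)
Your approach is essentially the paper's own proof: both start from Theorem~\ref{thm:match}, classify each $\sigma\in\CM(n;n_1,n_2,n_3,n_4)$ by the numbers $\alpha,\beta,\gamma,\delta$ of edges from $S_0$ to each $S_i$ together with the numbers $n_{ij}$ of edges between $S_i$ and $S_j$ for $i,j\ge1$, and then sum over the $n_{ij}$ via the identities $\sum_n x^n/(q)_n=1/(x)_\infty$ and $\sum_{n,m} q^{nm}x^ny^m/(q)_n(q)_m=(xy)_\infty/(x)_\infty(y)_\infty$ to produce, after dividing by $I_0$, the ratio $(ad)_{\beta+\gamma}(ac)_\beta(bd)_\gamma/(abcd)_{\beta+\gamma}$. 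The paper disposes of your acknowledged obstacle by recording the explicit crossing contribution $q^{\beta n_{13}+\beta n_{14}+\gamma n_{14}+\gamma n_{24}+n_{13}n_{24}}$ for matchings with fixed edge data; note that these exponents shift the six pair-products individually ($ad\mapsto adq^{\beta+\gamma}$, $ac\mapsto acq^\beta$, $bd\mapsto bdq^\gamma$, while $ab,bc,cd$ are unchanged), so your heuristic ``$I_0$ with shifted parameters'' is not literally a single reparametrization of $I_0$ but rather a product of shifted $q$-exponentials.
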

\begin{proof}
  Consider a complete matching $\sigma\in \CM(n;n_1,n_2,n_3,n_4)$.  Let
  $S_0=\{1,\dots,n\}$ and $S_i=\{m_{i-1}+1,m_{i-1}+2,\dots,m_i\}$ for
  $i=1,2,3,4$, where $m_0=n$ and $m_i = n+n_1+\dots+n_i$.  Let $\alpha$
  (respectively $\beta,\gamma,\delta$) be the number of edges between $S_0$ and
  $S_1$ (respectively $S_2,S_3,S_4$) in $\sigma$.  For
  $i,j\in\{1,2,3,4\}$, let $n_{ij}$ be the number of edges between the blocks
  $S_i$ and $S_j$ in $\sigma$.  It is straightforward to check that the sum
  of $q^{\cro(\sigma)}$ for all $\sigma$ having the above numbers is equal to
  \begin{align*}
& P(n,\abcd+) \qbinom{\abcd+}{\abcd,} 
q^{\beta n_{13} +\beta n_{14} + \gamma n_{14} + \gamma n_{24} + n_{13}n_{24}}\\
& \times \qbinom{n_1}{\alpha,n_{12},n_{13},n_{14}}
\qbinom{n_2}{n_{21},\beta,n_{23},n_{24}}
\qbinom{n_3}{n_{31},n_{32},\gamma,n_{34}}
\qbinom{n_4}{n_{41},n_{42},n_{43},\delta}\\
& \times [\alpha]_q![\beta]_q![\gamma]_q![\delta]_q! 
[n_{12}]_q![n_{13}]_q![n_{14}]_q![n_{23}]_q![n_{24}]_q![n_{34}]_q!.
  \end{align*}
Thus by \eqref{eq:I_n} we have
\begin{align*}
I_n &= \left(\frac{\sqrt{1-q}}2\right)^n 
\sum_{\abcd,\ge0}
\ta^{\alpha} \tb^{\beta} \tc^{\gamma} \td^{\delta}
P(n,\abcd+) \qbinom{\abcd+}{\abcd,} \\
&\qquad \times
\sum_{n_{12}\ge0} \frac{\left(\ta\tb\right)^{n_{12}}}{[n_{12}]_q!}
\sum_{n_{14}\ge0} \frac{\left(\ta\td q^{\beta+\gamma}\right)^{n_{14}}}{[n_{14}]_q!}
\sum_{n_{23}\ge0} \frac{\left(\tb\tc\right)^{n_{23}}}{[n_{23}]_q!}
\sum_{n_{34}\ge0} \frac{\left(\tc\td\right)^{n_{34}}}{[n_{34}]_q!}\\ 
&\qquad \times
\sum_{n_{13},n_{24}\ge0} 
\frac{\left(\ta\tc q^{\beta}\right)^{n_{13}}}{[n_{13}]_q!}
\frac{\left(\tb\td q^{\gamma}\right)^{n_{24}}}{[n_{24}]_q!}\\
&= \frac{1}{2^n} \sum_{\abcd,\ge0}
a^{\alpha} b^{\beta} c^{\gamma} d^{\delta}
\op(n,\abcd+) \qbinom{\abcd+}{\abcd,} \\
&\qquad \times
\frac{(abcdq^{\beta+\gamma})_\infty}
{(ab)_\infty (adq^{\beta+\gamma})_\infty (bc)_\infty (cd)_\infty 
(acq^{\beta})_\infty (bdq^{\gamma})_\infty}.
\end{align*}
Here we used the following identities:
\[
\frac{(x/(1-q))^n}{[n]_q!} =\frac{x^n}{(q)_n}, \qquad
\sum_{n\ge0} \frac{x^n}{(q)_n} = (x)_n, \qquad
\sum_{n,m\ge0} \frac{q^{nm}x^ny^m}{(q)_m(q)_n} = 
\frac{(xy)_\infty}{(x)_\infty (y)_\infty}.
\]
Since \eqref{inttomom} is $\mu_n(a,b,c,d;q)=I_n/I_0$, 
we are done by \eqref{eq:AW_integral}.
\end{proof}

\begin{cor}
\[
2^n\mu_n(a,b,c,0;q) = \sum_{\alpha,\beta,\gamma\ge0}
a^{\alpha} b^{\beta} c^{\gamma} 
\op(n, \alpha+\beta+\gamma) 
\qbinom{\alpha+\beta+\gamma}{\alpha,\beta,\gamma}
(ac)_{\beta}.
\]
\end{cor}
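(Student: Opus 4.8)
The plan is to obtain this corollary as the specialization $d=0$ of Theorem~\ref{thm:main}. The identity in Theorem~\ref{thm:main} holds as an identity of rational functions (equivalently, of formal power series) in $a,b,c,d$, and every summand on the right-hand side is analytic at $d=0$: the factors $(ad)_{\bg}$ and $(bd)_{\gamma}$ are polynomials in $d$, while $1/(abcd)_{\bg}$ is a ratio of polynomials in $d$ whose denominator has constant term $1$. Hence one may set $d=0$ term by term.

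Once $d=0$, the monomial $\abcdpower$ carries a factor $d^{\delta}$, so every summand with $\delta\ge 1$ vanishes and only the $\delta=0$ terms survive. For those terms the quotient of $q$-shifted factorials collapses, since $(0;q)_k=1$ for every $k\ge 0$: one has $(ad)_{\bg}\big|_{d=0}=(bd)_{\gamma}\big|_{d=0}=(abcd)_{\bg}\big|_{d=0}=1$, leaving only the factor $(ac)_{\beta}$. Similarly the $q$-multinomial $\qbinom{\abcd+}{\abcd,}$ with $\delta=0$ reduces to $\qbinom{\alpha+\beta+\gamma}{\alpha,\beta,\gamma}$ because $(q;q)_0=1$, and $\op(n,\abcd+)$ becomes $\op(n,\alpha+\beta+\gamma)$. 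Collecting these simplifications and summing over $\alpha,\beta,\gamma\ge 0$ yields precisely the stated formula.

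I do not anticipate a genuine obstacle; the one point that deserves a sentence in the write-up is the legitimacy of specializing $d=0$ term by term, which is covered by the analyticity observation above. As an alternative route one could rerun the matching argument in the proof of Theorem~\ref{thm:main} with $\td=0$ from the outset: then $n_4=0$, the block $S_4$ disappears, $\delta$ and all the $n_{i4}$ are forced to be $0$, and the same infinite-product bookkeeping---now dividing by the $d=0$ Askey-Wilson integral $I_0(a,b,c,0)$---produces the corollary; but the direct specialization is shorter and is the one I would present.
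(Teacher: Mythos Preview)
Your proposal is correct and follows essentially the same approach as the paper: specialize Theorem~\ref{thm:main} by killing one parameter. The only cosmetic difference is that the paper sets $c=0$ (forcing $\gamma=0$ via the monomial $c^{\gamma}$, so the surviving factor is $(ad)_{\beta}$) and then invokes the symmetry $\mu_n(a,b,0,d;q)=\mu_n(a,b,d,0;q)$ to rename $d\to c$, $\delta\to\gamma$; you set $d=0$ directly, which is slightly more economical and needs no appeal to symmetry.
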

\begin{proof}
  Since $\mu_n(a,b,c,d;q)$ is symmetric in $a,b,c,d$, it is sufficient to find
  $\mu_n(a,b,0,d;q)$.  If $c=0$ in Theorem~\ref{thm:main}, then nonzero terms
  have $\gamma=0$. Thus we obtain the desired identity with $c$ and $\gamma$
  replaced with $d$ and $\delta$ respectively.
\end{proof}

\begin{cor}
\[
2^n\mu_n(a,b,0,0;q) = \sum_{\alpha,\beta\ge0}
a^{\alpha} b^{\beta}  \op(n, \alpha+\beta) 
\qbinom{\alpha+\beta}{\alpha}.
\]
\end{cor}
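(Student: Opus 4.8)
The plan is to obtain this identity as the $c=0$ specialization of the immediately preceding corollary, which is itself the $d=0$ specialization of Theorem~\ref{thm:main}. The only point to keep in mind is the behavior of the $q$-shifted factorials under sending a parameter to $0$: every factor in Theorem~\ref{thm:main} that involves $c$ or $d$ appears as $(x;q)_k$ with $x$ a monomial in $c,d$ (never as $1/(x;q)_k$), and $(0;q)_k=\prod_{i=0}^{k-1}(1-0\cdot q^i)=1$ for all $k\ge0$. Hence the specialization introduces no spurious poles or zeros; it simply annihilates the terms carrying a positive power of the vanished parameter.

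Concretely, I would start from the preceding corollary,
\[
2^n\mu_n(a,b,c,0;q)=\sum_{\alpha,\beta,\gamma\ge0}a^{\alpha}b^{\beta}c^{\gamma}\,\op(n,\alpha+\beta+\gamma)\,\qbinom{\alpha+\beta+\gamma}{\alpha,\beta,\gamma}\,(ac;q)_{\beta},
\]
and put $c=0$. The factor $c^{\gamma}$ forces $\gamma=0$ in every surviving term; for those terms $c^{\gamma}=1$, the coefficient $(ac;q)_{\beta}$ becomes $(0;q)_{\beta}=1$, and the $q$-multinomial coefficient collapses to $\qbinom{\alpha+\beta}{\alpha,\beta}=\qbinom{\alpha+\beta}{\alpha}$. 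What remains is exactly $\sum_{\alpha,\beta\ge0}a^{\alpha}b^{\beta}\,\op(n,\alpha+\beta)\,\qbinom{\alpha+\beta}{\alpha}$, as claimed.

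An equally short route is to specialize Theorem~\ref{thm:main} directly at $c=d=0$: then all nonzero terms have $\gamma=\delta=0$, so $\beta+\gamma=\beta$ and each of $(ad;q)_{\bg}$, $(ac;q)_{\beta}$, $(bd;q)_{\gamma}$, $(abcd;q)_{\bg}$ degenerates to $1$, while the $q$-multinomial becomes $\qbinom{\alpha+\beta}{\alpha}$, and the asserted identity drops out at once. There is essentially no obstacle here: the one substantive check is the one noted above, namely that the $c,d$-dependent $q$-shifted factorials all sit in numerators and thus specialize to the empty product rather than causing division by zero. Everything else is bookkeeping.
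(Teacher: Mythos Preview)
Your argument is correct and matches the paper's one-line proof, which simply says the corollary is an immediate consequence of Theorem~\ref{thm:main} at $c=d=0$. One small wording slip: the factor $(abcd;q)_{\beta+\gamma}$ in Theorem~\ref{thm:main} actually sits in the \emph{denominator}, not the numerator, but since $(0;q)_k=1$ it specializes to $1$ all the same, and you do verify this explicitly in your second route.
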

\begin{proof}
  This is an immediate consequence of Theorem~\ref{thm:main} when $c=d=0$. 
\end{proof}

Note that the moment $2^n\mu_n(a,b,0,0;q)$ is not a polynomial with positive
coefficients. However, if we use a factor that is a power of $1-q$, we can
express this moment using polynomials with positive coefficients as follows.

\begin{prop}
\label{prop:partmatch}
We have
\[
2^n\mu_n(a,b,0,0;q) = \sum_{u,v\ge0} a^u b^v \qbinom{u+v}{u}
(1-q)^{(n-u-v)/2}\sum_{\pi\in\matching(n,u+v)} q^{\cro(\pi)}.
\]
\end{prop}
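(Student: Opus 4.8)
The plan is to deduce the statement directly from the corollary to Theorem~\ref{thm:main} obtained by setting $c=d=0$, namely
\[
2^n\mu_n(a,b,0,0;q) = \sum_{\alpha,\beta\ge0} a^{\alpha} b^{\beta}\, \op(n,\alpha+\beta)\, \qbinom{\alpha+\beta}{\alpha},
\]
together with the defining relations $\op(n,m)=\overline P(n,m)=(1-q)^{(n-m)/2}P(n,m)$ and $P(n,m)=\sum_{\pi\in\matching(n,m)}q^{\cro(\pi)}$ from the start of this section.

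First I would substitute $m=\alpha+\beta$ into the definition of $\op$, obtaining $\op(n,\alpha+\beta)=(1-q)^{(n-\alpha-\beta)/2}\sum_{\pi\in\matching(n,\alpha+\beta)}q^{\cro(\pi)}$. Inserting this into the displayed corollary and relabelling $(\alpha,\beta)$ as $(u,v)$ gives exactly
\[
2^n\mu_n(a,b,0,0;q) = \sum_{u,v\ge0} a^u b^v \qbinom{u+v}{u}(1-q)^{(n-u-v)/2}\sum_{\pi\in\matching(n,u+v)} q^{\cro(\pi)},
\]
which is the claim. The one thing to verify is that the summation ranges are consistent: $\matching(n,u+v)=\emptyset$ unless $n\equiv u+v\pmod 2$, and in that case $(n-u-v)/2$ is a nonnegative integer, which matches the support of $\op(n,\cdot)$ exhibited by \eqref{eq:OP}, so no spurious terms or fractional exponents appear.

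There is in effect no obstacle here: the proposition is the preceding corollary with $\overline P$ unfolded into its matching generating function, the content being interpretive — $\qbinom{u+v}{u}$ and $\sum_{\pi\in\matching(n,u+v)}q^{\cro(\pi)}$ are polynomials in $q$ with nonnegative coefficients, while all the cancellation that prevents $2^n\mu_n(a,b,0,0;q)$ itself from having positive coefficients is absorbed into the explicit factor $(1-q)^{(n-u-v)/2}$. If a self-contained derivation were preferred, one could instead rerun the computation in the proof of Theorem~\ref{thm:main} specialized to $c=d=0$, stopping before \eqref{eq:OP} is applied: keep $P(n,\alpha+\beta)$ (equivalently $\tf_n$ via Theorem~\ref{thm:match}) intact and merely track the powers of $1-q$ produced by the rescalings $\ta=a/\sqrt{1-q}$, $\tb=b/\sqrt{1-q}$ and the prefactor $\bigl(\tfrac{\sqrt{1-q}}{2}\bigr)^n$ in \eqref{eq:I_n2}.
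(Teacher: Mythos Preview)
Your proposal is correct and matches the paper's intent: the paper gives no proof of Proposition~\ref{prop:partmatch}, presenting it immediately after the corollary $2^n\mu_n(a,b,0,0;q) = \sum_{\alpha,\beta\ge0} a^{\alpha} b^{\beta}\, \op(n,\alpha+\beta)\qbinom{\alpha+\beta}{\alpha}$, so the implicit argument is exactly the substitution $\op(n,m)=(1-q)^{(n-m)/2}\sum_{\pi\in\matching(n,m)}q^{\cro(\pi)}$ that you carry out.
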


Another corollary is a simplified version of the $q=0$ result in 
\cite[Theorem 3.7]{CSSW}.

\begin{cor}
\[
2^n\mu_n(a,b,c,d;0) = 
\sum_{k=0}^{n} \left(\binom{n}{\frac{n-k}2}-\binom{n}{\frac{n-k}2-1}\right)
\sum_{\abcd+=k}\abcdpower \Phi(\beta,\gamma),
\]
where
\[
\Phi(\beta,\gamma) = \left\{
  \begin{array}{ll}
    \frac{(1-ac)(1-bd)(1-ad)}{1-abcd}  & \mbox{if $\beta\ne0$ and $\gamma\ne0$,}\\
\frac{(1-bd)(1-ad)}{1-abcd} & \mbox{if $\beta=0$ and $\gamma\ne0$,}\\
\frac{(1-ac)(1-ad)}{1-abcd}  & \mbox{if $\beta\ne0$ and $\gamma=0$,}\\
   1 & \mbox{if $\beta=\gamma=0$.}
  \end{array}\right.
\]
\end{cor}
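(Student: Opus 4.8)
The plan is to obtain this corollary as the $q=0$ specialization of Theorem~\ref{thm:main}. First I would note that the $(\alpha,\beta,\gamma,\delta)$-sum in Theorem~\ref{thm:main} is finite: $\overline P(n,m)$ vanishes unless $0\le m\le n$ and $m\equiv n\pmod 2$, so only finitely many quadruples contribute. Moreover every factor appearing is regular at $q=0$ --- in particular the left side is, since $2^n(abcd;q)_n\mu_n(a,b,c,d;q)$ is a polynomial in $q$ (Proposition~\ref{prop:polymom}) and $(abcd;0)_n=1-abcd$ is generically nonzero --- so we may simply set $q=0$ termwise.

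Next I would evaluate the three $q$-dependent pieces at $q=0$. From \eqref{eq:OP}, every summand carries the factor $q^{\binom{(k-m)/2+1}{2}}$, which vanishes at $q=0$ unless $k=m$, and the $k=m$ term contributes $\bigl(\binom{n}{\frac{n-m}2}-\binom{n}{\frac{n-m}2-1}\bigr)\qbinom{m}{0}$, so
\[
\overline P(n,m)\big|_{q=0}=\binom{n}{\tfrac{n-m}2}-\binom{n}{\tfrac{n-m}2-1}.
\]
Also $\qbinom{\alpha+\beta+\gamma+\delta}{\alpha,\beta,\gamma,\delta}\big|_{q=0}=1$. Finally, using $(x;q)_m\big|_{q=0}=1$ for $m=0$ and $(x;q)_m\big|_{q=0}=1-x$ for $m\ge 1$ on $(ad)_{\beta+\gamma}$, $(ac)_\beta$, $(bd)_\gamma$, and $(abcd)_{\beta+\gamma}$, I would check by a four-way split on whether $\beta$ and $\gamma$ vanish that
\[
\left.\frac{(ad)_{\beta+\gamma}(ac)_{\beta}(bd)_{\gamma}}{(abcd)_{\bg}}\right|_{q=0}=\Phi(\beta,\gamma):
\]
when $\beta,\gamma>0$ the ratio is $\frac{(1-ac)(1-bd)(1-ad)}{1-abcd}$; when exactly one of $\beta,\gamma$ is $0$ the corresponding numerator factor becomes $1$; and when $\beta=\gamma=0$ every factor (including $(abcd;q)_0$) equals $1$.

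Substituting these evaluations into Theorem~\ref{thm:main} and grouping the surviving terms by $k=\alpha+\beta+\gamma+\delta$ (which is legitimate because $\overline P(n,k)\big|_{q=0}$ depends on $k$ alone) produces exactly the stated formula. I do not expect any real obstacle here: the only care needed is the elementary case analysis that identifies the specialized ratio with $\Phi(\beta,\gamma)$ and the observation that \eqref{eq:OP} collapses to a single term at $q=0$; the comparison with \cite[Theorem~3.7]{CSSW} is then purely a matter of translating notation.
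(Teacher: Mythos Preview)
Your proposal is correct and is exactly the intended argument: the paper presents this result as a direct corollary of Theorem~\ref{thm:main} (with no separate proof), and your termwise specialization at $q=0$ --- collapsing \eqref{eq:OP} to the $k=m$ term, reducing the $q$-multinomial to $1$, and identifying the Pochhammer ratio with $\Phi(\beta,\gamma)$ via the fourfold case split --- supplies precisely the omitted details.
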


Finally, we shall use Theorem~\ref{thm:main} to give an yet another 
formula, Theorem~\ref{thm:main2},  for the moments. We shall need the $q$-analogue of Saalschutz's theorem, for 
which Zeilberbger  \cite{Zeilberger1987} gave a bijective proof.
\begin{lem}\label{lem:Zeilberger}
\[
\qbinom{a+b+k}{a}\qbinom{b+c+k}{b}\qbinom{c+a+k}{c}  
=\sum_{j\ge0} q^{j(j+k)}
\frac{[a+b+c+k-j]_q!}{[a-j]_q![b-j]_q![c-j]_q![j]_q![k+j]_q!}.
\]
\end{lem}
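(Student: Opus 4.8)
The plan is to recognise the stated identity as a disguised form of the terminating $q$-Pfaff--Saalsch\"utz summation \cite[(II.12)]{GR} and to reduce to it. Write $N=a+b+c+k$ and let $T_j$ denote the $j$th term on the right-hand side. Factoring out the $j=0$ term $T_0=\qbinom{a+b+c+k}{a,b,c,k}$ and computing the ratio of consecutive terms --- a routine manipulation using $[m]_q!=(q;q)_m/(1-q)^m$ and $1-q^{m-j}=-q^{m-j}(1-q^{-m}q^j)$ --- gives
\[
\frac{T_{j+1}}{T_j}=\frac{(1-q^{-a}q^j)(1-q^{-b}q^j)(1-q^{-c}q^j)}{(1-q^{-N}q^j)(1-q^{k+1}q^j)(1-q^{j+1})}\,q,
\]
so that the right-hand side equals $\qbinom{a+b+c+k}{a,b,c,k}\cdot {}_3\phi_2(q^{-a},q^{-b},q^{-c};\,q^{-N},q^{k+1};\,q,q)$.

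Next I would check that this ${}_3\phi_2$ is balanced and terminating: the Saalsch\"utz balance condition $q\cdot q^{-a}q^{-b}q^{-c}=q^{-N}q^{k+1}$ is precisely the relation $N=a+b+c+k$, and the numerator parameter $q^{-a}$ with $a$ a nonnegative integer terminates the series. The convention $1/[a-j]_q!=0$ for $j>a$ is exactly what makes both sides of the lemma vanish for $j>\min(a,b,c)$, matching the terminating series. Applying \cite[(II.12)]{GR} with $q^{-n}=q^{-a}$, $\alpha=q^{-b}$, $\beta=q^{-c}$, $\gamma=q^{k+1}$ (and checking $q^{1-n}\alpha\beta/\gamma=q^{-N}$) evaluates the sum as
\[
{}_3\phi_2(q^{-a},q^{-b},q^{-c};\,q^{-N},q^{k+1};\,q,q)=\frac{(q^{k+1+b};q)_a\,(q^{k+1+c};q)_a}{(q^{k+1};q)_a\,(q^{k+1+b+c};q)_a}.
\]

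Finally I would translate this back to $q$-factorials via $(q^{m+1};q)_a=(1-q)^a[m+a]_q!/[m]_q!$. All powers of $1-q$ cancel; multiplying by $T_0$ and cancelling the $[k]_q!$ and $[a+b+c+k]_q!$ factors leaves
\[
\frac{[a+b+k]_q!\,[b+c+k]_q!\,[c+a+k]_q!}{[a]_q!\,[b]_q!\,[c]_q!\,[a+k]_q!\,[b+k]_q!\,[c+k]_q!},
\]
which is exactly $\qbinom{a+b+k}{a}\qbinom{b+c+k}{b}\qbinom{c+a+k}{c}$, completing the proof. The only real work here is bookkeeping --- pinning down the parameter dictionary for \cite[(II.12)]{GR} and tracking the vanishing conventions --- so there is no genuine obstacle: once the sum is recognised as $q$-Pfaff--Saalsch\"utz, the content is classical. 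Alternatively one could bypass \cite{GR} entirely and invoke Zeilberger's bijective proof \cite{Zeilberger1987}, or reprove the ${}_3\phi_2$ evaluation directly by induction on $a$ using the $q$-Pascal recurrence and matching both sides at $a=0$.
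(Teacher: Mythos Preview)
Your proposal is correct and matches the paper's treatment: the paper does not give a proof at all but simply states the lemma as ``the $q$-analogue of Saalsch\"utz's theorem'' and cites Zeilberger's bijective proof \cite{Zeilberger1987}. You have gone further by explicitly carrying out the identification with the standard terminating $q$-Pfaff--Saalsch\"utz sum \cite[(II.12)]{GR}, and your computations (the term ratio, the balance check, and the final factorial bookkeeping) are all correct.
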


\begin{thm}\label{thm:main2}
We have
\begin{multline*}
2^n\mu_n(a,b,c,d;q) =
\sum_{k=0}^{n} \left(\binom{n}{\frac{n-k}2}-\binom{n}{\frac{n-k}2-1}\right)
\sum_{\abcd+ +2t=k}\abcdpower \frac{(ac)_{\beta}(bd)_{\gamma}}{(abcd)_\bg}\\
\times(-1)^{t} q^{\binom{t+1}2} 
\qbinom{\alpha+\beta+\gamma+t}{\alpha}
\qbinom{\beta+\gamma+\delta+t}{\beta,\gamma,\delta+t}
\qbinom{\delta+\alpha+t}{\delta},
\end{multline*}
in the second sum, $\abcd,\ge0$ and $-k\le t\le k/2$.
\end{thm}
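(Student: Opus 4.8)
The plan is to start from Theorem~\ref{thm:main} and eliminate the infinite $q$-Pochhammer factor $\frac{(ad)_{\bg}(ac)_{\beta}(bd)_{\gamma}}{(abcd)_\bg}$-style ratio by expanding $\op(n,\abcd+)$ via \eqref{eq:OP}, so that the binomial difference $\binom{n}{(n-k)/2}-\binom{n}{(n-k)/2-1}$ is exposed, and then to collapse the remaining products of $q$-binomial coefficients using Zeilberger's $q$-Saalsch\"utz identity of Lemma~\ref{lem:Zeilberger}. Concretely, in Theorem~\ref{thm:main} I would substitute for $\op(n,\abcd+)$ using \eqref{eq:OP}; this replaces the index $\abcd+$ by a new summation variable $k$ together with a sign $(-1)^{(k-\abcd-)/2}$, a power $q^{\binom{(k-\abcd-)/2+1}{2}}$, and a single $q$-binomial coefficient $\qbinom{(k+\abcd+)/2}{(k-\abcd-)/2}$. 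Writing $2t=k-(\alpha+\beta+\gamma+\delta)$ so that $\abcd+ +2t=k$, the sign becomes $(-1)^t$ and the $q$-power becomes $q^{\binom{t+1}{2}}$, matching the claimed formula; the outer binomial difference is carried along unchanged.

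The second step is to handle the factor $\qbinom{\abcd+}{\abcd,}\qbinom{(k+\abcd+)/2}{(k-\abcd-)/2}$ together with the Pochhammer ratio. Here I would expand $(ad)_{\bg}$, $(ac)_\beta$, $(bd)_\gamma$, and $(abcd)_\bg^{-1}$ by the $q$-binomial theorem; this introduces auxiliary nonnegative summation indices, and one should combine them so that the net effect is to produce the three $q$-binomial coefficients $\qbinom{\alpha+\beta+\gamma+t}{\alpha}$, $\qbinom{\beta+\gamma+\delta+t}{\beta,\gamma,\delta+t}$, $\qbinom{\delta+\alpha+t}{\delta}$ appearing in the statement. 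The cleanest route is probably to recognize the combination of $\qbinom{\abcd+}{\abcd,}$, the single $q$-binomial from \eqref{eq:OP}, and the expanded Pochhammer symbols as an instance of the left-hand side of Lemma~\ref{lem:Zeilberger} with a suitable choice of $(a,b,c,k)$ in terms of $\alpha,\beta,\gamma,\delta,t$; then the right-hand side of that lemma, with its $q^{j(j+k)}$-weighted factorial quotient, should reorganize into the desired triple product after renaming $j$. One would then check that no residual sums remain except the stated ones over $\abcd,\ge0$ and $-k\le t\le k/2$.

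The main obstacle I anticipate is the bookkeeping in the second step: matching the parameters of Lemma~\ref{lem:Zeilberger} to the messy combination coming from \eqref{eq:OP} and the $q$-binomial theorem expansions of four Pochhammer symbols, and verifying that all auxiliary indices cancel cleanly. In particular, the ratio $(ad)_\bg/(abcd)_\bg$ mixes the exponents $\beta+\gamma$, so one must be careful that after expansion the $a$-, $b$-, $c$-, $d$-degrees in the final answer are exactly $\alpha,\beta,\gamma,\delta$ and not shifted. A secondary concern is sign and $q$-power reconciliation: the exponent $\binom{(k-m)/2+1}{2}$ in \eqref{eq:OP} with $m=\abcd+$ and $k$ the new variable must become precisely $\binom{t+1}{2}$ with $t=(k-\abcd-)/2$, and the extra $q$-powers generated by Zeilberger's identity ($q^{j(j+k)}$) and by the $q$-binomial theorem ($q^{\binom{\cdot}{2}}$ terms) must fully absorb into the stated $q^{\binom{t+1}{2}}$ with nothing left over. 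Since the analogous reduction from \eqref{AWWmomformula} to Theorem~\ref{thm:AWWmom} was carried out by essentially the same mechanism, and since the $d=0$ specialization of the target is Theorem~\ref{thm:AWWd=0} (already proved), these checks should go through, but they are where the real work lies.
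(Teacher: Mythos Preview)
Your overall strategy---start from Theorem~\ref{thm:main}, feed in \eqref{eq:OP} for $\op$, and close up with Lemma~\ref{lem:Zeilberger}---is exactly the paper's. But your second step misfires in a way that would make the bookkeeping far worse than necessary. Look at the target formula: the factor $\frac{(ac)_\beta(bd)_\gamma}{(abcd)_{\bg}}$ survives intact. So there is no reason to expand $(ac)_\beta$, $(bd)_\gamma$, or $(abcd)_{\bg}^{-1}$ by the $q$-binomial theorem; doing so introduces three auxiliary indices that you would then have to re-sum back into exactly those Pochhammer symbols. The only factor that must disappear in passing from Theorem~\ref{thm:main} to Theorem~\ref{thm:main2} is $(ad)_{\bg}$.

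The paper expands just that one factor,
\[
(ad)_{\bg}=\sum_{j\ge 0}(-1)^j q^{\binom{j}{2}}\qbinom{\bg}{j}(ad)^j,
\]
and then shifts $\alpha\mapsto\alpha-j$, $\delta\mapsto\delta-j$ so that the monomial $a^\alpha b^\beta c^\gamma d^\delta$ emerges with the correct exponents; this single shift is precisely what resolves your worry about the $a$- and $d$-degrees being displaced. After the shift one has $\op(n,\abcd+-2j)$, and only now is \eqref{eq:OP} applied. Setting $t=(k-\abcd-)/2$, the exponent bookkeeping works out cleanly:
\[
\tbinom{j}{2}+\tbinom{t+j+1}{2}=\tbinom{t+1}{2}+j(j+t),
\]
so the overall sign is $(-1)^t$, the overall $q$-power is $q^{\binom{t+1}{2}}$, and the remaining $j$-sum is exactly the right-hand side of Lemma~\ref{lem:Zeilberger} with parameters $(\alpha,\bg,\delta,t)$ in place of $(a,b,c,k)$. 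Its left-hand side supplies the three $q$-binomials in the statement, with the extra $\qbinom{\bg}{\beta}$ (pulled out earlier from the multinomial) combining with $\qbinom{\bg+\delta+t}{\bg}$ to give the trinomial $\qbinom{\beta+\gamma+\delta+t}{\beta,\gamma,\delta+t}$. No auxiliary sums survive.
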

\begin{proof}
  In Theorem~\ref{thm:main}, if we expand $(ad)_{\bg}$ using the $q$-binomial theorem
\[
(ad)_{\bg} = \sum_{j\ge0} (-1)^j q^{\binom j2} \qbinom{\bg}{j} (ad)^j
\]
and change the indices $\alpha\mapsto \alpha-j$ and $\delta\mapsto \delta-j$, we
get
\[
2^n\mu_n(a,b,c,d;q) = \sum_{\abcd,,\ge0}\abcdpower 
\frac{(ac)_{\beta}(bd)_{\gamma}}{(abcd)_\bg} \qbinom{\bg}{\beta} X,
\]
where
  \begin{align*}
    X & = \sum_{j\ge0} (-1)^j q^{\binom j2} \op(n,\abcd+-2j)
    \frac{[\abcd+-2j]_q!}{[\alpha-j]_q! [\bg-j]_q! [\delta-j]_q! [j]_q!}.
  \end{align*}
We have
\[
\op(n,\abcd+-2j)
= \sum_{k\ge0}\left( \binom{n}{\frac{n-k}2} -
      \binom{n}{\frac{n-k}2-1}\right) 
(-1)^{t+j} q^{\binom{t+j+1}2} \qbinom{k-t-j}{t+j},
\]
where $t=(k-\abcd-)/2$.
Thus
\[
X = \sum_{k\ge0}\left( \binom{n}{\frac{n-k}2} -
      \binom{n}{\frac{n-k}2-1}\right) (-1)^t q^{\binom{t+1}2}
\sum_{j\ge0} q^{j(j+t)}
    \frac{[\abcd++t-j]_q!}{[\alpha-j]_q! [\bg-j]_q! [\delta-j]_q! [j]_q! [t+j]_q!}.
\]
By Lemma~\ref{lem:Zeilberger}, we can evaluate the $j$-sum and we get the
desired formula. 
\end{proof}

In Theorem~\ref{thm:main2}, if $c=0$, then $\gamma=0$ in the sum, and we obtain
Theorem~\ref{thm:AWWd=0}. We also have another corollary which includes a
 positivity result.

\begin{cor}\label{cor:flipcor}
We have
\[
2^n\mu_n(a,b,q/a,q/b;q) =
\sum_{k=0}^{n} \left(\binom{n}{\frac{n-k}2}-\binom{n}{\frac{n-k}2-1}\right)
\frac{1}{[k+1]_q} 
\sum_{\substack{|A|+|B|\le k \\A+B\equiv k \mod2}} a^A b^B q^{\frac{k-A-B}2},
\]  
where the second sum is over all integers $A$ and $B$ such that
$|A|+|B|\le k$ and $A+B\equiv k \mod2$.
Thus $[n+1]_q! 2^n\mu_n(a,b,q/a,q/b;q)$ is a Laurent polynomial in $a$ and $b$
whose coefficients are positive polynomials in $q$. 
\end{cor}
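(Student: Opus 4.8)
The plan is to derive Corollary~\ref{cor:flipcor} from Theorem~\ref{thm:main2} by specializing $c=q/a$ and $d=q/b$. First I would substitute these values directly: the prefactor $\frac{(ac)_\beta(bd)_\gamma}{(abcd)_\bg}$ becomes $\frac{(q)_\beta (q)_\gamma}{(q^2)_\bg}$, which using $(q)_m = (1-q)\cdots(1-q^m)$ and the fact that $(q^2)_{\bg} = \frac{(q)_{\bg+1}}{1-q}$ simplifies to a ratio of $q$-factorials; combined with the explicit monomials $a^\alpha b^\beta c^\gamma d^\delta = a^{\alpha-\gamma} b^{\beta-\delta} q^{\gamma+\delta}$, the whole summand is expressed in terms of $A = \alpha-\gamma$, $B = \beta-\delta$, and $t$. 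The key point is that after this substitution many terms telescope, and I expect the triple $q$-binomial product
\[
\qbinom{\alpha+\beta+\gamma+t}{\alpha}
\qbinom{\beta+\gamma+\delta+t}{\beta,\gamma,\delta+t}
\qbinom{\delta+\alpha+t}{\delta}
\]
multiplied by the prefactor and summed over the free combinations of $\alpha,\beta,\gamma,\delta,t$ subject to $\alpha+\beta+\gamma+\delta+2t=k$ and fixed $A=\alpha-\gamma$, $B=\beta-\delta$, collapses to the single factor $\frac{1}{[k+1]_q} q^{\frac{k-A-B}{2}}$ appearing in the statement.

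The main computational engine should again be the $q$-analogue of Saalsch\"utz's theorem, Lemma~\ref{lem:Zeilberger}, possibly applied twice, together with the $q$-Vandermonde identity \cite[II.6]{GR}. Concretely, after fixing $A$, $B$, and $k$ (hence the total $\alpha+\beta+\gamma+\delta = k-2t$), one is left with an inner sum over, say, $\gamma$ and $\delta$ (with $\alpha = A+\gamma$, $\beta = B+\delta$) of a product of $q$-binomials against $q$-factorial weights; this is exactly the shape to which a $q$-Saalsch\"utz summation applies. I would first carry out the $\gamma$-sum, obtaining an intermediate $q$-binomial; then the $\delta$-sum; and track the exponent of $q$ carefully, since the claimed power $q^{\frac{k-A-B}{2}}$ is ``half-integral looking'' but is in fact an integer because $|A|+|B| \le k$ and $A+B \equiv k \pmod 2$ force $k-A-B$ to be a nonnegative even integer. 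The appearance of $\frac{1}{[k+1]_q}$ is the $q$-analogue of the $\frac{1}{k+1}$ in a Catalan-type count, and I expect it to emerge from a $q$-binomial coefficient identity of the form $\frac{1}{[k+1]_q}\qbinom{2k}{k}_q$-style, or more likely from the identity $\sum \qbinom{\cdots}{\cdots} = \frac{[\text{something}]_q!}{[k+1]_q \cdots}$ that results after the double summation.

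For the positivity statement at the end, once the explicit formula is established, I would argue as follows: multiply through by $[n+1]_q!$. Since $[k+1]_q \mid [n+1]_q!$ for $k \le n$ (as $[n+1]_q! = [1]_q[2]_q\cdots[n+1]_q$), the factor $\frac{[n+1]_q!}{[k+1]_q}$ is a product of $q$-integers, hence a polynomial in $q$ with nonnegative coefficients. The difference of binomials $\binom{n}{\frac{n-k}{2}} - \binom{n}{\frac{n-k}{2}-1}$ is a nonnegative integer (a ballot number), and $q^{\frac{k-A-B}{2}}$ is a monomial in $q$ with nonnegative exponent. Therefore every term in the resulting double sum is a Laurent monomial in $a,b$ times a polynomial in $q$ with nonnegative integer coefficients, which gives the claim.

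The hard part will be the bookkeeping in the double summation: choosing the right pair of variables to sum over so that Lemma~\ref{lem:Zeilberger} applies cleanly, and verifying that the various $q$-powers ($\binom{t+1}{2}$, the exponents introduced by expanding $(ac)_\beta$, $(bd)_\gamma$, $(abcd)_\bg^{-1}$, and the $q^{j(j+k)}$ from $q$-Saalsch\"utz) combine to exactly $q^{\frac{k-A-B}{2}}$ with no leftover. A secondary subtlety is handling the ranges of summation: $t$ runs over $-k \le t \le k/2$ in Theorem~\ref{thm:main2}, and one must check that negative $t$ contributions either vanish or are correctly absorbed, and that the constraint $|A|+|B|\le k$ is precisely the condition for the inner sum to be nonempty. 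I would expect no genuinely new idea to be needed beyond these $q$-series manipulations, but the algebra is delicate enough that organizing it via the substitution $A=\alpha-\gamma$, $B=\beta-\delta$ from the outset is essential.
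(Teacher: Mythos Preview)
Your overall strategy matches the paper's: start from Theorem~\ref{thm:main2}, specialize $c=q/a$, $d=q/b$, observe that $\frac{(ac)_\beta(bd)_\gamma}{(abcd)_{\beta+\gamma}}\qbinom{\beta+\gamma}{\beta}=\frac{1}{[\beta+\gamma+1]_q}$, pass to the variables $A=\alpha-\gamma$, $B=\beta-\delta$, and then evaluate the remaining inner sums. Your positivity argument at the end is correct and is exactly how the paper concludes.

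Where your plan diverges is in the choice of summation tool and in a case analysis you do not anticipate. The paper does \emph{not} use $q$-Saalsch\"utz (Lemma~\ref{lem:Zeilberger}) here at all; that lemma was already spent in proving Theorem~\ref{thm:main2}. Instead, after fixing $k$, $A$, $B$ and writing the remaining sum as a double sum over $\alpha$ and $\beta$ (with $\gamma=\alpha-A$, $\delta=\beta-B$, and $t=\tfrac{k+A+B}{2}-\alpha-\beta$), the paper evaluates the $\alpha$-sum by $q$-Vandermonde --- but this requires splitting into the cases $A<0$ and $A\ge 0$, using the two different forms \cite[(II.7)]{GR} and \cite[(II.6)]{GR} respectively, because the lower limit $\alpha\ge\max(A,0)$ behaves differently. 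In each case the $\alpha$-sum produces a factor $(q^{\beta-\frac{k+A+B}{2}+1})_{\frac{k+A-B}{2}}$ that vanishes unless $\beta=\tfrac{k+A+B}{2}$, so the $\beta$-sum collapses to a single term. This is what forces $|A|+|B|\le k$ and yields the clean $\tfrac{1}{[k+1]_q}$.

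So your bookkeeping worry is well-founded, but the mechanism is not a double Saalsch\"utz; it is a single Vandermonde (in two guises) followed by a support argument that kills all but one $\beta$. If you try to push Saalsch\"utz through directly you will likely find the shape does not match, whereas organizing the inner $\alpha$-sum as a terminating ${}_2\phi_1$ and splitting on $\operatorname{sgn}(A)$ makes everything fall out.
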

\begin{proof}
  When $c=q/a$ and $d=q/b$, we have
  $\frac{(ac)_{\beta}(bd)_{\gamma}}{(abcd)_\bg}\qbinom{\bg}{\beta}
  =\frac{1}{[\bg+1]_q}$. Thus by Theorem~\ref{thm:main2}, letting
  $A=\alpha-\gamma$ and $B=\beta-\delta$, we get
\[
2^n\mu_n(a,b,q/a,q/b;q) =\sum_{k=0}^{n} 
\left(\binom{n}{\frac{n-k}2}-\binom{n}{\frac{n-k}2-1}\right)
\sum_{\substack{|A|+B\le k\\-A+|B|\le k \\A+B\equiv k \mod2}}
a^A b^B q^{\frac{k-A-B}2} \cdot X(A,B),
\]
where
\begin{equation}
\begin{aligned}
X(A,B)=&\sum_{\beta=\max(B,0)}^{\frac{k+A+B}2}
\frac{1}{[\beta+\frac{k-A-B}2+1]_q} \qbinom{\frac{k+A-B}2}{\beta-B}\\
&\times \sum_{\alpha=\max(A,0)}^{\frac{k+A-B}2} (-1)^{t} q^{\binom{t}2} 
\qbinom{\beta+\frac{k-A-B}2+1}{\beta+\alpha-A+1}
\qbinom{\frac{k-A+B}2+\alpha}{\alpha},
\end{aligned}
\nonumber
\end{equation}
$t=\frac{k+A+B}2 - \alpha-\beta$.  Note that the ranges of indices are
determined by the $q$-binomial coefficients in the sums of $X(A,B)$. We will
consider the two cases: $A<0$ and $A\ge0$.

\textsc{Case 1}: $A<0$. One can evaluate the $\alpha$-sum of $X(A,B)$ using the
$q$-Vandermonde \cite[(II.7)]{GR} 
\[
X(A,B) =\sum_{\beta=\max(B,0)}^{\frac{k+A+B}2}
\frac{1}{[\beta+\frac{k-A-B}2+1]_q} \qbinom{\frac{k+A-B}2}{\beta-B}
(-1)^{\frac{k+A+B}2-\beta} q^{\binom{\frac{k+A+B}2-\beta}2}
\frac{(q^{\beta-\frac{k+A+B}2+1})_{\frac{k+A-B}2}}
{(q^{\beta-A+2})_{\frac{k+A-B}2}}.
\]
Since $B\le\beta\le\frac{k+A+B}2$, we have $-\frac{k+A-B}2\le
\beta-\frac{k+A+B}2\le0$. Thus $(q^{\beta-\frac{k+A+B}2+1})_{\frac{k+A-B}2} =0$
unless $\beta=\frac{k+A+B}2$, which can occur if and only if $k+A-B\ge0$ and
$k+A+B\ge0$. Thus when $A<0$ we have $X(A,B)=\frac{1}{[k+1]_q}$ if $|A|+|B|\le
k$, and $X(A,B)=0$ otherwise.

\textsc{Case 2}: $A\ge0$. Note that in this case we have $A+|B|=|A|+|B|\le k$. Then in
the $\alpha$-sum we have $A\le \alpha\le \frac{k+A-B}2$. By changing the index
$\alpha \mapsto \frac{k+A-B}2-\alpha$, and using the other type of
$q$-Vandermonde \cite[(II.6)]{GR}, one can check that the $\alpha$-sum and of
$X(A,B)$ is equal to
\[
(-1)^{\beta-B} q^{\binom{\beta-B+1}2 -(k+1)\frac{k+A-B}2} 
\qbinom{k}{\frac{k+A-B}2} 
\frac{(q^{\beta-\frac{k+A+B}2+1})_{\frac{k+A-B}2}}{(q^{-k})_{\frac{k+A-B}2}},
\] 
which is zero unless $\beta=\frac{k+A+B}2$. Since $|A|+|B|\le k$, there is
$\beta=\frac{k+A+B}2$ in the $\beta$-sum. In fact, when $\beta=\frac{k+A+B}2$
one can check that the above term is $1$.  Thus when $A\ge0$ we have
$X(A,B)=\frac{1}{[k+1]_q}$.

In both cases, we have $X(A,B)=\frac{1}{[k+1]_q}$ when $|A|+|B|\le k$ and 
$X(A,B)=0$ otherwise. Thus we obtain the identity.
\end{proof}

 One can show that the sum of the coefficients in Corollary~\ref{cor:flipcor} is   $2^n(n+1)!.$

\begin{conj}
  $2^n [n+i+j-1]_q! \mu_n(a,b,q^i/a,q^j/b;q)$ is a Laurent polynomial in $a,b$
  and polynomial in $q$ with nonnegative coefficients.
\end{conj}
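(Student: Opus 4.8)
The plan is to generalize the argument used to prove Corollary~\ref{cor:flipcor}, tracking the parameters $i$ and $j$. The starting point should again be Theorem~\ref{thm:main2}, which expresses $2^n\mu_n(a,b,c,d;q)$ as a sum over $\alpha,\beta,\gamma,\delta,t$ of a monomial $\abcdpower$ times the mixed-binomial factor $\left(\binom{n}{(n-k)/2}-\binom{n}{(n-k)/2-1}\right)$, times the $q$-binomial product and the rational factor $\frac{(ac)_\beta(bd)_\gamma}{(abcd)_\bg}$. First I would substitute $c=q^i/a$ and $d=q^j/b$ and simplify the rational factor: one computes $\frac{(ac)_\beta(bd)_\gamma}{(abcd)_\bg}\qbinom{\bg}{\beta} = \frac{(q^i;q)_\beta (q^j;q)_\gamma}{(q^{i+j};q)_\bg}\qbinom{\bg}{\beta}$, which is a ratio of $q$-factorials and in particular a positive polynomial in $q$ divided by $(q^{i+j};q)_\bg$. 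When $i=j=1$ this collapsed to $1/[\bg+1]_q$; for general $i,j$ it should collapse to something like $\frac{[i+j-1]_q! \,[\beta+i-1]_q!\,[\gamma+j-1]_q!}{[i-1]_q!\,[j-1]_q!\,[\bg+i+j-1]_q!}$ up to a $q$-power, which explains the shift $[n+i+j-1]_q!$ in the conjecture.

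Next, as in the proof of Corollary~\ref{cor:flipcor}, I would set $A=\alpha-\gamma$ and $B=\beta-\delta$, so that the monomial becomes $a^Ab^B q^{(k-A-B)/2+(\text{correction in }i,j)}$, and the remaining sum over (say) $\alpha$ and $\beta$ with $A,B,k$ fixed becomes an inner sum $X_{i,j}(A,B)$ that one wants to evaluate in closed form. The heart of the matter is to show that this inner sum telescopes to a single monomial in $q$ (times $[n+i+j-1]_q!^{-1}$-type normalization) whenever $|A|+|B|\le k$ and vanishes otherwise — exactly as $X(A,B)=\frac{1}{[k+1]_q}$ in the base case. I expect the $\alpha$-sum to be evaluable by one of the two $q$-Vandermonde summations \cite[(II.6), (II.7)]{GR}, producing a factor $(q^{\beta-(k+A+B)/2+1};q)_{\bullet}$ that forces $\beta$ to take its extreme value, and then the $\beta$-sum collapses to one term. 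The bookkeeping of the $q$-powers and the $i,j$-dependent factorial prefactors will be more involved than in the $i=j=1$ case, but structurally the same.

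Once the closed form $2^n[n+i+j-1]_q!\,\mu_n(a,b,q^i/a,q^j/b;q)$ is obtained as a sum over $k$ of $\left(\binom{n}{(n-k)/2}-\binom{n}{(n-k)/2-1}\right)$ times a positive sum over $(A,B)$ with $|A|+|B|\le k$ of $q$-powers times positive $q$-factorial ratios, positivity follows from the fact that the mixed binomial differences are nonnegative integers (they count the Dyck-prefix part, as in Section~\ref{sec:combinatorial-proof-}) and that $q$-factorials have nonnegative coefficients. The main obstacle will be the explicit evaluation of the inner double sum $X_{i,j}(A,B)$: unlike the case $i=j=1$, the rational factor no longer reduces to the clean $1/[\bg+1]_q$ but to a genuine ratio of $q$-shifted factorials, so the $q$-Vandermonde step must be set up carefully (possibly after a further expansion by the $q$-binomial theorem of $(q^i;q)_\beta$ and $(q^j;q)_\gamma$, reducing to a $q$-Saalsch\"utz evaluation à la Lemma~\ref{lem:Zeilberger}) to see that everything still telescopes. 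A secondary difficulty is controlling the $q$-powers so that no negative powers survive after multiplying by $[n+i+j-1]_q!$; I would expect a monomial correction of the form $q^{c_{i,j}(A,B,k)}$ with a manifestly nonnegative exponent, which should be read off from the $q$-Vandermonde evaluation.
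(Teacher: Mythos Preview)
The statement you are trying to prove is stated in the paper as a \emph{conjecture}; the authors give no proof, and indeed leave it open. So there is no paper-proof to compare against. That said, your plan has a concrete obstruction that you have underestimated.

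The miracle in the proof of Corollary~\ref{cor:flipcor} is that for $i=j=1$ the rational factor combines with $\qbinom{\bg}{\beta}$ to give exactly $\frac{1}{[\bg+1]_q}$, a quantity depending on $\beta$ and $\gamma$ only through their sum $\bg$. This is what allows it to be absorbed into the $q$-binomial $\qbinom{\bg+\delta+t}{\bg}$ as a shift, producing $\frac{1}{[\bg+\delta+t+1]_q}\qbinom{\bg+\delta+t+1}{\bg+1}$, after which the $\alpha$-sum is a genuine terminating $_2\phi_1$ evaluable by $q$-Vandermonde, and the resulting factor $(q^{\beta-(k+A+B)/2+1};q)_{\bullet}$ kills all but the extreme value of $\beta$. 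For general $i,j$ one computes
\[
\frac{(q^i;q)_\beta(q^j;q)_\gamma}{(q^{i+j};q)_{\bg}}\qbinom{\bg}{\beta}
= \qbinom{i+j-1}{i-1}\,\frac{\qbinom{\beta+i-1}{\beta}\qbinom{\gamma+j-1}{\gamma}}{\qbinom{\bg+i+j-1}{\bg}},
\]
and for $i\ge 2$ or $j\ge 2$ this genuinely depends on $\beta$ and $\gamma$ separately. The extra factor $\qbinom{\gamma+j-1}{\gamma}=\qbinom{\alpha-A+j-1}{\alpha-A}$ enters the $\alpha$-sum and promotes it from a $_2\phi_1$ to (at best) a balanced $_3\phi_2$; even if one manages a $q$-Saalsch\"utz evaluation there, the extra factor $\qbinom{\beta+i-1}{\beta}$ in the $\beta$-sum means several terms survive rather than one, and you no longer get a single $q$-monomial whose exponent is manifestly nonnegative. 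Your own caveat (``the $q$-Vandermonde step must be set up carefully'') is precisely the point at which the argument breaks down rather than merely becoming more intricate.

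The fact that the authors, who invented the $i=j=1$ argument, state the general case as a conjecture is strong circumstantial evidence that this direct generalization does not close. A proof, if one exists along these lines, will need a genuinely new idea beyond repackaging the $q$-Vandermonde step.
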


Next we give an alternative formula for $\mu_{n}(a,b,-b,-a;q)$ using
Theorem~\ref{thm:main}. Note that $\mu_{n}(a,b,-b,-a;q)=0$ unless $n$ is even.
We need the following lemma, \cite[(II.9)]{GR} 

\begin{lem}\label{lem:andrews}
 \[
\sum_{i=0}^n (-1)^i \qbinom ni (x)_{n-i} (x)_i
=\left\{
  \begin{array}{ll}
    (q;q^2)_{n/2}  (x^2;q^2)_{n/2}, & \mbox{if $n$ is even,}\\
    0, & \mbox{if $n$ is odd.}
  \end{array}
\right.
\]
\end{lem}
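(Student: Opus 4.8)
The plan is to derive Lemma~\ref{lem:andrews} directly from the $q$-binomial theorem and the Gaussian summation formula \eqref{eq:Gaussian_formula} by computing the coefficient of each power of $x$ on the left-hand side, which I will call $S_n(x)$. First I would expand both Pochhammer factors, using $(x)_i=\sum_{j}\qbinom{i}{j}(-1)^j q^{\binom{j}{2}}x^j$ and $(x)_{n-i}=\sum_{k}\qbinom{n-i}{k}(-1)^k q^{\binom{k}{2}}x^k$, and interchange the order of summation so that $i$ is innermost. Using the ``subset of a subset'' identity
\[
\qbinom{n}{i}\qbinom{i}{j}\qbinom{n-i}{k}=\qbinom{n}{j}\qbinom{n-j}{k}\qbinom{n-j-k}{i-j},
\]
which holds with no extra power of $q$, the sum over $i$ factors as $\qbinom{n}{j}\qbinom{n-j}{k}(-1)^j\sum_{l\ge0}(-1)^l\qbinom{n-j-k}{l}$, and by \eqref{eq:Gaussian_formula} the last factor equals $(q;q^2)_{(n-j-k)/2}$ when $n-j-k$ is even and $0$ otherwise. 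Collecting signs and recognizing the $q$-trinomial coefficient, this leaves
\[
S_n(x)=\sum_{\substack{j,k\ge0\\ j+k\equiv n\mod 2}}(-1)^k q^{\binom{j}{2}+\binom{k}{2}}\qbinom{n}{j,k,n-j-k}(q;q^2)_{(n-j-k)/2}\,x^{j+k}.
\]

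Next I would read off the coefficient of $x^m$. Writing $\qbinom{n}{j,k,n-j-k}=\qbinom{n}{m}\qbinom{m}{j}$ with $k=m-j$, that coefficient equals $(-1)^m(q;q^2)_{(n-m)/2}\qbinom{n}{m}\,T_m$, where
\[
T_m=\sum_{j=0}^m\qbinom{m}{j}(-1)^j q^{\binom{j}{2}+\binom{m-j}{2}}.
\]
The crux is to evaluate $T_m$. Since $T_m/(q)_m$ is the Cauchy product of the coefficient sequences of the $q$-exponentials $(t)_\infty=\sum_j(-1)^j q^{\binom{j}{2}}t^j/(q)_j$ and $(-t)_\infty=\sum_k q^{\binom{k}{2}}t^k/(q)_k$, Euler's identities give
\[
\sum_{m\ge0}\frac{T_m}{(q)_m}t^m=(t)_\infty(-t)_\infty=(t^2;q^2)_\infty=\sum_{l\ge0}\frac{(-1)^l q^{l(l-1)}}{(q^2;q^2)_l}t^{2l}.
\]
Hence $T_m=0$ for $m$ odd and, using $(q)_{2l}=(q;q^2)_l(q^2;q^2)_l$, $T_{2l}=(-1)^l q^{l(l-1)}(q;q^2)_l$. (Alternatively $T_m$ can be evaluated by writing $\binom{j}{2}+\binom{m-j}{2}=\binom{m}{2}-j(m-j)$ and applying \eqref{eq:Gaussian_formula} in base $q^{-1}$.)

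It remains to assemble. If $n$ is odd, every surviving term has $m=j+k$ odd, so $T_m=0$ and $S_n(x)=0$. If $n=2N$ is even, then the coefficient of $x^{2l}$ in $S_n(x)$ is $(q;q^2)_{N-l}\qbinom{2N}{2l}(q;q^2)_l(-1)^l q^{l(l-1)}$, and the elementary identity $(q)_{2m}=(q;q^2)_m(q^2;q^2)_m$ gives
\[
(q;q^2)_{N-l}(q;q^2)_l\qbinom{2N}{2l}=(q;q^2)_N\Qbinom{N}{l}{q^2}.
\]
On the other hand, the $q$-binomial theorem in base $q^2$ gives $(q;q^2)_N(x^2;q^2)_N=(q;q^2)_N\sum_{l=0}^N\Qbinom{N}{l}{q^2}(-1)^l q^{l(l-1)}x^{2l}$, so the two sides of the lemma agree coefficientwise.

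I expect the main obstacle to be purely organizational: carrying the parity constraints and the chain of $q$-binomial and $q$-trinomial rearrangements without sign errors, and matching the mixed $q$/$q^2$ Pochhammers at the end. The only genuinely non-formal input is the evaluation of $T_m$, and that collapses to Euler's two $q$-exponential identities (equivalently, to one further use of the Gaussian formula), so no essentially new idea is needed.
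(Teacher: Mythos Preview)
Your argument is correct: the trinomial rearrangement, the Gaussian alternating sum to kill the inner $i$-sum, the generating-function evaluation of $T_m$ via $(t;q)_\infty(-t;q)_\infty=(t^2;q^2)_\infty$, and the final conversion of $(q;q^2)_{N-l}(q;q^2)_l\qbinom{2N}{2l}$ into $(q;q^2)_N\Qbinom{N}{l}{q^2}$ all check out.

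The paper, however, does not supply a proof of this lemma at all; it simply cites it as the identity \cite[(II.9)]{GR}, namely a terminating $_2\phi_1$ sum in Gasper--Rahman. So the ``route'' in the paper is a one-line reference to the literature, whereas you give a self-contained elementary derivation built only from tools already used elsewhere in the paper (the $q$-binomial theorem and \eqref{eq:Gaussian_formula}). Your approach buys independence from the hypergeometric machinery and makes the parity dichotomy transparent, at the cost of a page of bookkeeping; the paper's citation is shorter but opaque. Either is acceptable here.
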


\begin{thm}
\[
4^n\mu_{2n}(a,b,-b,-a;q) = \sum_{\alpha,\beta} a^{2\alpha} b^{2\beta}
\op(2n,2\alpha+2\beta) \qbinom{2\alpha+2\beta}{2\alpha}
\frac{(q;q^2)_\alpha (q;q^2)_\beta (-a^2;q)_{2\beta}}{(qa^2b^2;q^2)_\beta}
\]  
\end{thm}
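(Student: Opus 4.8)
The plan is to specialize Theorem~\ref{thm:main} to the parameters $(a,b,c,d)=(a,b,-b,-a)$ with $n$ replaced by $2n$, and then perform the two resulting inner summations. After the substitution we have $\abcdpower=(-1)^{\gamma+\delta}a^{\alpha+\delta}b^{\beta+\gamma}$, together with $(ac)_{\beta}=(-ab)_{\beta}$, $(bd)_{\gamma}=(-ab)_{\gamma}$, $(ad)_{\bg}=(-a^2)_{\bg}$, and $(abcd)_{\bg}=(a^2b^2)_{\bg}$, so Theorem~\ref{thm:main} reads
\[
4^n\mu_{2n}(a,b,-b,-a;q)=\sum_{\abcd,\ge0}(-1)^{\gamma+\delta}a^{\alpha+\delta}b^{\beta+\gamma}\,\op(2n,\abcd+)\,\qbinom{\abcd+}{\abcd,}\,\frac{(-a^2)_{\bg}(-ab)_{\beta}(-ab)_{\gamma}}{(a^2b^2)_{\bg}}.
\]

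Next I would collect terms according to $A=\alpha+\delta$ and $B=\beta+\gamma$. Using the factorization of the $q$-multinomial coefficient $\qbinom{\abcd+}{\abcd,}=\qbinom{A+B}{A}\qbinom{A}{\alpha}\qbinom{B}{\beta}$ and the observation that $\op(2n,A+B)$, $\qbinom{A+B}{A}$, and $(-a^2)_{B}/(a^2b^2)_{B}$ depend only on $A$ and $B$, the right-hand side becomes
\[
\sum_{A,B\ge0}a^A b^B\,\op(2n,A+B)\,\qbinom{A+B}{A}\,\frac{(-a^2)_{B}}{(a^2b^2)_{B}}\left(\sum_{\delta=0}^{A}(-1)^\delta\qbinom{A}{\delta}\right)\left(\sum_{\gamma=0}^{B}(-1)^\gamma\qbinom{B}{\gamma}(-ab)_{B-\gamma}(-ab)_{\gamma}\right),
\]
where I have used $\qbinom{A}{\alpha}=\qbinom{A}{\delta}$ and $(-ab)_{\beta}=(-ab)_{B-\gamma}$.

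Then I would evaluate the two inner sums. The $\delta$-sum is the Gaussian formula \eqref{eq:Gaussian_formula}: it vanishes unless $A=2\alpha$ is even, in which case it equals $(q;q^2)_{\alpha}$. The $\gamma$-sum is exactly Lemma~\ref{lem:andrews} with $x=-ab$ and $n=B$: it vanishes unless $B=2\beta$ is even, in which case it equals $(q;q^2)_{\beta}\,(a^2b^2;q^2)_{\beta}$. Substituting these, and using $(a^2b^2;q)_{2\beta}=(a^2b^2;q^2)_{\beta}(qa^2b^2;q^2)_{\beta}$ to rewrite $(a^2b^2;q^2)_{\beta}/(a^2b^2;q)_{2\beta}=1/(qa^2b^2;q^2)_{\beta}$, yields precisely the claimed identity after renaming the surviving indices $(\alpha,\beta)$.

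The argument is essentially bookkeeping once set up; the only substantive points are recognizing that the product $(-ab)_{\beta}(-ab)_{\gamma}$ has, after fixing $\beta+\gamma$, exactly the shape $ (x)_{n-i}(x)_i$ needed to invoke Lemma~\ref{lem:andrews}, and that $(-a^2)_{\bg}/(a^2b^2)_{\bg}$ splits off cleanly because it depends only on $\beta+\gamma$. I do not anticipate a real obstacle; the one place to be careful is the factorization of the $q$-multinomial coefficient and keeping track of which factors are constant on the fibers $\{\alpha+\delta=A,\ \beta+\gamma=B\}$.
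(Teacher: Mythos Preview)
Your proof is correct and follows essentially the same route as the paper's own argument: specialize Theorem~\ref{thm:main} with $c=-b$, $d=-a$, reindex by $A=\alpha+\delta$ and $B=\beta+\gamma$, and evaluate the two inner sums. The paper phrases both evaluations as applications of Lemma~\ref{lem:andrews} (the Gaussian formula being its $x=0$ case), whereas you invoke \eqref{eq:Gaussian_formula} separately for the $\delta$-sum; this is the only cosmetic difference.
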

\begin{proof}
  Letting $c=-b$, $d=-a$, and changing the indices $\alpha+\delta\mapsto
  \alpha$, $\bg\mapsto \beta$ in Theorem~\ref{thm:main}, we get
  \begin{align*}
4^n\mu_{2n}(a,b,-b,-a;q) &= \sum_{\alpha,\beta\ge0} a^{\alpha} b^{\beta} 
\op(2n,\alpha+\beta) \qbinom{\alpha+\beta}{\alpha}
\frac{(-a^2)_{\beta}}{(a^2b^2)_{\beta}}\\
&\qquad \times
\sum_{\gamma\ge0} (-1)^{\gamma} \qbinom{\beta}{\gamma}
(-ab)_{\beta-\gamma} (-ab)_{\gamma}
\sum_{\delta\ge0} (-1)^{\delta} \qbinom{\alpha}{\delta}.
  \end{align*}
By Lemma~\ref{lem:andrews}, we get the theorem.
\end{proof}


If we use the techniques of this section on the $q^2$-Hermite polynomials 
$H_n(x|q^2)$, then we can find another alternative theorem, whose proof we omit.

\begin{thm}
\[
4^n \mu_{2n}(a,b,-a,-b;q) = \sum_{m=0}^n \overline P(2n,2m)
\frac{(q;q^2)_m}{(qa^2b^2;q^2)_m}
\sum_{\alpha+\beta+i=m} a^{2\alpha+2i} b^{2\beta+2i}
(-q;q)_i q^{\binom i2} \Qbinom{m}{\alpha,\beta,i}{q^2}
\]
\end{thm}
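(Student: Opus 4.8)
The plan is to run the argument behind Theorem~\ref{thm:main} with the continuous $q$-Hermite polynomials replaced by the continuous $q^2$-Hermite polynomials $H_n(x|q^2)$, the key input being a quadratic transformation of the weight. Combining the weight factors of the pairs $\{a,-a\}$ and $\{b,-b\}$ by $(te^{i\theta};q)_\infty(-te^{i\theta};q)_\infty=(t^2e^{2i\theta};q^2)_\infty$, and splitting the numerator $(e^{2i\theta},e^{-2i\theta};q)_\infty$ into base $q^2$ using $(z;q^2)_\infty(qz;q^2)_\infty=(z;q)_\infty$ and $(z^2;q^2)_\infty=(z;q)_\infty(-z;q)_\infty$ with $z=e^{2i\theta}$, one verifies
\[
w(\cos\theta;a,b,-a,-b;q)=w(\cos2\theta;-1,-q,a^2,b^2;q^2),
\]
where $w$ is the weight \eqref{eq:w}. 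Substituting $\psi=2\theta$ in $I_{2n}(a,b,-a,-b)$, using $(\cos\theta)^{2n}=\bigl(\tfrac{1+\cos\psi}2\bigr)^n$, the factorization $(q)_\infty=(q;q^2)_\infty(q^2)_\infty$, and the companion evaluation of \eqref{eq:AW_integral} giving $I_0(a,b,-a,-b)=(q;q^2)_\infty\,I_0(-1,-q,a^2,b^2;q^2)$, reduces the problem to the $q^2$-Askey--Wilson measure with parameters $-1,-q,a^2,b^2$, the extra parameters $-1,-q$ being exactly what the split of the numerator contributes.

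Next I would realize the resulting integral by the $q^2$-analogue of \eqref{eq:I_n2}: write the weight in the variable $\psi$ as $v(\cos\psi|q^2)$ times the product of the four $q^2$-Hermite generating functions for $a^2,b^2,-1,-q$, keep $(\cos\theta)^{2n}$ as a polynomial in $\cos\theta$ (not yet expanded in $\cos\psi$), and apply the $q^2$-form of the Ismail--Stanton--Viennot identity \eqref{eq:7}, i.e.\ Theorem~\ref{thm:match} with $q$ replaced by $q^2$. Equivalently one expands each $H_m(\cos2\theta|q^2)$ in continuous $q$-Hermite polynomials $H_{2k}(\cos\theta|q)$ through the quadratic connection formula and integrates against $(e^{2i\theta},e^{-2i\theta};q)_\infty$; this second form keeps the outer matching skeleton on the $2n$ positions at base $q$, so that summing it by \eqref{eq:OP} produces the factor $\overline P(2n,2m)$.

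Then, just as in the proof of Theorem~\ref{thm:main}, I would group the complete matchings by the numbers of edges from the block of outer points to the four parameter blocks and by the numbers of edges among the parameter blocks; the geometric series sum, giving $q^2$-multinomial coefficients, and the blocks attached to $-1$ and $-q$ are summed away. The alternating sums that occur are evaluated by the Gaussian formula and the $q$-Vandermonde sum \cite[(II.6),(II.7)]{GR} --- the $q^2$-counterpart of the use of Lemma~\ref{lem:andrews} in the preceding theorem --- and they produce precisely $(q;q^2)_m$, $(qa^2b^2;q^2)_m^{-1}$, $(-q;q)_i\,q^{\binom i2}$ and $\Qbinom{m}{\alpha,\beta,i}{q^2}$, with $a^{2\alpha+2i}b^{2\beta+2i}$ coming from re-indexing the edges among the $a^2$-, $b^2$- and $-1$-blocks.

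The main obstacle I anticipate is the bookkeeping that keeps the two bases straight: $\overline P(2n,2m)$ and $(-q;q)_i q^{\binom i2}$ live at base $q$, whereas the multinomial and $(q;q^2)_m/(qa^2b^2;q^2)_m$ live at base $q^2$. One must avoid collapsing $(\cos\theta)^{2n}$ into $\frac1{2^n}\sum_j\binom nj(\cos\psi)^j$ prematurely, since that would yield only the base-$q^2$ matching polynomial on $\le n$ points, and $\overline P(2n,2m)\ne\frac1{2^n}\sum_j\binom nj 2^{n-j}\overline P(j,m)\big|_{q\mapsto q^2}$ in general; it is the quadratic connection between $H_m(\cdot|q^2)$ and the $q$-Hermite polynomials that supplies the correct mixing and produces the base-$q$ factors. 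Once this identity is in place, comparing coefficients with \eqref{eq:OP} and collecting terms gives the stated formula.
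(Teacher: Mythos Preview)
Your proposal is correct and follows the paper's own approach: the paper omits the proof entirely, giving only the single hint ``use the techniques of this section on the $q^2$-Hermite polynomials $H_n(x|q^2)$,'' and your sketch is precisely a fleshing-out of that hint, including the correct quadratic weight identity $w(\cos\theta;a,b,-a,-b;q)=w(\cos2\theta;-1,-q,a^2,b^2;q^2)$ and the correct normalization $I_0(a,b,-a,-b)=(q;q^2)_\infty\,I_0(-1,-q,a^2,b^2;q^2)$. You also correctly flag the one genuine subtlety, namely keeping the base-$q$ factor $\overline P(2n,2m)$ from the $2n$ outer points separate from the base-$q^2$ bookkeeping on the parameter blocks, and your proposed use of the quadratic connection between $H_m(\cos2\theta|q^2)$ and the $q$-Hermite polynomials is a legitimate way to do this.

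One minor simplification worth noting: rather than introduce the two artificial parameters $-1,-q$ and then sum their blocks away, you can expand only the two genuine generating-function factors for $a^2$ and $b^2$ and leave the full numerator $(e^{2i\theta},e^{-2i\theta};q)_\infty=v(\cos\theta|q)$ as the $q$-Hermite measure; then the integral to evaluate is $\int_0^\pi(\cos\theta)^{2n}H_{n_1}(\cos2\theta|q^2)H_{n_2}(\cos2\theta|q^2)v(\cos\theta|q)\,d\theta$, whose combinatorial model has the $2n$ outer points at base $q$ (giving $\overline P(2n,2m)$), the edges into and between the two parameter blocks at base $q^2$ (giving $\Qbinom{m}{\alpha,\beta,i}{q^2}$), and the remaining base-$q$ factors $(q;q^2)_m$ and $(-q;q)_iq^{\binom i2}$ coming from the doubling of the outer endpoints and from the linearization coefficient of the product of two $q^2$-Hermite polynomials. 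This avoids the ``sum away the $-1,-q$ blocks'' step entirely and makes the three indices $\alpha,\beta,i$ transparent from the start.
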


\section{Connections with other results}
\label{sec:conn-with-other}


Recently the problem of finding formulas for moments of orthogonal polynomials has
gained some attention.  In this section we will rederive several moment formulas
using Corollary~\ref{cor:AWWc=d=0} and Corollaries~\ref{cor:bb=q/a} and
\ref{cor:bb=-a}.  We will use the following fact.

\begin{lem}\label{lem:rescaling}
If $\mu_n$ is the $\nth$ moment of the orthogonal polynomial $P_n(x)$ defined by
\[
P_{n+1}(x) = (x-b_n) P_n(x) -\lambda_n P_{n-1}(x),
\]
and $\mu'_n$ is the $\nth$ moment of the orthogonal polynomials $P'_n(x)$
defined by 
\[
P'_{n+1}(x) = (x-c(d+b_n)) P'_n(x) -c^2\lambda_n P'_{n-1}(x), 
\]
then
\[
\mu'_n = c^n \sum_{m=0}^n d^{n-m} \binom nm \mu_m.
\]
\end{lem}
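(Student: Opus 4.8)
The plan is to prove Lemma~\ref{lem:rescaling} by relating the two families of orthogonal polynomials through an explicit linear substitution of the variable $x$, and then transporting that relation to the level of moments via the linear functional that defines them. Let $\mathcal{L}$ be the linear functional on polynomials with $\mathcal{L}[x^n] = \mu_n$, so that $\{P_n\}$ is the monic orthogonal polynomial sequence for $\mathcal{L}$. First I would observe that if we set $Q_n(x) := c^n P_n\!\left(\frac{x}{c} - d\right)$, then $Q_n$ is monic of degree $n$ in $x$, and a direct substitution into the three-term recurrence for $P_n$ shows that $Q_n$ satisfies
\[
Q_{n+1}(x) = \bigl(x - c(d + b_n)\bigr) Q_n(x) - c^2 \lambda_n Q_{n-1}(x),
\]
with $Q_{-1} = 0$, $Q_0 = 1$. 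By uniqueness of the monic solution of a three-term recurrence, $Q_n = P'_n$. Hence $P'_n(x) = c^n P_n\!\left(\frac{x}{c} - d\right)$.

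\emph{Next}, I would define a new linear functional $\mathcal{L}'$ by $\mathcal{L}'[f(x)] := \mathcal{L}\bigl[f(c(x+d))\bigr]$ for polynomials $f$; equivalently, $\mathcal{L}'$ is the pushforward of $\mathcal{L}$ under the affine map $x \mapsto c(x+d)$. Then $\mathcal{L}'$ is linear, and one checks that $\{P'_n\}$ is orthogonal with respect to $\mathcal{L}'$: indeed $\mathcal{L}'[P'_m P'_n] = c^{m+n}\mathcal{L}\bigl[P_m(x+d-d)\,P_n(\cdots)\bigr]$ — more carefully, $\mathcal{L}'[P'_n(x) x^k] = \mathcal{L}[c^n P_n(x) \cdot (x/c - d)^k]$ after the substitution, and expanding $(x/c-d)^k$ as a polynomial of degree $k$ in $x$ shows this vanishes for $k < n$ by orthogonality of $P_n$. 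So $\mathcal{L}'$ is (a scalar multiple of) the functional for $\{P'_n\}$, and since both are normalized at $\mu_0 = \mu'_0 = 1$ (which follows from $P_0 = P'_0 = 1$ and the normalization convention), they agree. Therefore $\mu'_n = \mathcal{L}'[x^n]$.

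\emph{Finally}, I would compute $\mu'_n = \mathcal{L}'[x^n]$ directly. We have
\[
\mu'_n = \mathcal{L}\bigl[(c(x+d) - d \cdot 0 \cdots)\bigr]
\]
— more precisely $\mu'_n = \mathcal{L}\bigl[(cx + cd)^n\bigr]$ is not quite right; rather $\mathcal{L}'[x^n] = \mathcal{L}[(c(x+d))^n]$ would be the pushforward in the other direction, so I must be careful about which direction the substitution goes. The correct bookkeeping: since $P'_n(x) = c^n P_n(x/c - d)$, the measure/functional for $P'$ is obtained by pushing the functional for $P$ forward under $y \mapsto c(y+d)$, giving $\mu'_n = \mathcal{L}[(c(x+d))^n]$. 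Expanding by the binomial theorem,
\[
\mu'_n = c^n \sum_{m=0}^n \binom{n}{m} d^{n-m}\, \mathcal{L}[x^m] = c^n \sum_{m=0}^n \binom{n}{m} d^{n-m}\, \mu_m,
\]
which is the claimed formula.

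\textbf{The main obstacle} I anticipate is purely a matter of getting the direction and sign conventions of the affine substitution exactly right: one must verify simultaneously that (i) $Q_n := c^n P_n(x/c - d)$ genuinely satisfies the stated recurrence for $P'_n$ (a short computation matching $b$-coefficients $c(d+b_n)$ and $\lambda$-coefficients $c^2\lambda_n$), and (ii) the corresponding transformation of moments is $\mu \mapsto c(x+d)$ and not its inverse $\mu \mapsto x/c - d$. The cleanest way to nail this down is to avoid functionals entirely and instead check the moment formula is consistent with the recurrence directly: one can verify by induction on $n$ that the quantities $\tilde\mu_n := c^n \sum_{m} \binom{n}{m} d^{n-m}\mu_m$ satisfy the Hankel-determinant / three-term-recurrence compatibility conditions forced by $b_n \mapsto c(d+b_n)$, $\lambda_n \mapsto c^2\lambda_n$ — but the functional argument above is more transparent once the substitution direction is pinned down, and I would present it that way after double-checking the low-degree cases $n=0,1,2$ by hand.
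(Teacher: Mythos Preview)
Your approach is correct. The substitution $P'_n(x) = c^n P_n(x/c - d)$ does satisfy the stated recurrence for $P'_n$, and pushing the moment functional $\mathcal{L}$ forward along $y \mapsto c(y+d)$ gives a functional $\mathcal{L}'$ for which the $P'_n$ are orthogonal; the binomial expansion of $\mathcal{L}[(c(y+d))^n]$ then yields the claimed formula. Your worry about the direction of the substitution is well-placed but you resolve it correctly: since $P'_n(c(y+d)) = c^n P_n(y)$, orthogonality transfers in the direction you want, and the normalization $\mathcal{L}'[1]=\mathcal{L}[1]=1$ pins down the constant.

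As for comparison with the paper: the paper states this lemma as a ``fact'' and gives no proof at all, treating it as a standard rescaling identity. So your write-up supplies an argument where the paper provides none. If you clean up the presentation (remove the false starts and the parenthetical self-corrections, and simply state the substitution, verify the recurrence in two lines, and then compute $\mu'_n$ via the functional), the proof will be short and complete. You might also note that an equivalent one-line argument is available via Viennot's weighted-Motzkin-path interpretation of moments (used elsewhere in the paper): replacing $b_n \to c(d+b_n)$ and $\lambda_n \to c^2\lambda_n$ amounts to scaling every step weight by $c$ and shifting every horizontal weight by $cd$, after which expanding each horizontal step as ``old horizontal or new $cd$-step'' and summing over placements of the $cd$-steps gives the same binomial sum. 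Either route is fine.
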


\subsection{The $q$-Laguerre polynomials}
The $q$-Laguerre polynomials are defined by $b_n=[n]_q+y[n+1]_q$ and $\lambda_n
= y[n]_q^2$.  Let $\nu_n$ denote the $\nth$ moment of the $q$-Laguerre
polynomials.  We refer the reader to \cite[Proposition 1]{CJVPR} for many
interesting interpretations of $\nu_n$ involving permutations, permutation
tableaux, matrix ansatz, etc.

\JV. \cite{JV_rook} and Corteel et al. \cite{CJVPR} showed that
\begin{equation}
  \label{eq:corteel}
\nu_n = \frac{1}{(1-q)^n} \sum_{k=0}^n  \sum_{j=0}^{n-k} y^j 
  \left( \binom{n}{j}\binom{n}{j+k} - \binom{n}{j-1}\binom{n}{j+k+1} \right)
\sum_{i=0}^k (-1)^k y^i q^{i(k+1-i)}.
\end{equation}

Using Lemma~\ref{lem:rescaling} we have
\[
\nu_n = y^{n/2} \sum_{m=0}^n (y^{1/2} +y^{-1/2})^{n-m} \binom{n}{m} 
2^m \mu_m( -qy^{1/2},-y^{-1/2} , 0,0;q).
\]
By using Corollary~\ref{cor:bb=q/a} for $2^m \mu_m( -qy^{1/2},-y^{-1/2} , 0,0;q)$, 
the binomial theorem for $(y^{1/2} +y^{-1/2})^{n-m}$, and Vandermonde's theorem 
to sum the $m$-sum, we obtain \eqref{eq:corteel}. 

Using the rank generating function for the totally nonnegative Grassmannian
cells, Williams \cite[Corollary 5.3]{Williams2005} showed another formula for $\nu_n$:
\begin{equation}
  \label{eq:williams}
\nu_n = \sum_{i=0}^n y^i \sum_{j=0}^{i} (-1)^j \qint{i-j}^n q^{i(j-i)}
\left( \binom{n}{j} q^{i-j} + \binom{n}{j-1} \right).
\end{equation}

We now show that the two formulas \eqref{eq:corteel} and
\eqref{eq:williams} are equivalent under simple manipulation.  In other words,
we will prove the following identity:
\begin{multline}
  \label{eq:16}
(1-q)^n \sum_{i=0}^n \sum_{j=0}^{i} (-1)^j \qint{i-j}^n y^i q^{i(j-i)}
\left( \binom{n}{j} q^{i-j} + \binom{n}{j-1} \right)\\
= \sum_{k=0}^n 
\sum_{j=0}^{n-k} y^j 
  \left( \binom{n}{j}\binom{n}{j+k} - \binom{n}{j-1}\binom{n}{j+k+1} \right)
\sum_{i=0}^k (-1)^k y^i q^{i(k+1-i)}.
\end{multline}

By the binomial expansion for $(1-q)^n \qint{i-j}^n = (1-q^{i-j})^n$, the left
hand side of \eqref{eq:16} is equal to
\begin{multline*}
\sum_{i=0}^n \sum_{j=0}^{i} (-1)^j 
y^i  q^{i(j-i)} \sum_{k=0}^n \binom{n}{k} (-q^{i-j})^k 
\left( \binom{n}{j} q^{i-j} + \binom{n}{j-1} \right)\\
= \sum_{i=0}^n \sum_{j=0}^{i} \sum_{k=0}^{n+1}  (-1)^{j+k}
y^i q^{(i-j)(k-i)}  \left(
\binom{n}{j-1}\binom{n}{k}  - \binom{n}{j}\binom{n}{k-1}  \right).
\end{multline*}
The summand is anti-symmetric in $k$ and $j$. Thus we can let $k$ range
from $i+1$ to $n+1$.  Replacing $k\mapsto k+j+1$ and $i\mapsto i+j$ and changing
the order of sums we get the right hand side of \eqref{eq:16}.

\subsection{\JV.'s formula}

\JV. \cite{JV_PASEP} defined a partition function $Z_n(a,b,y,q)$ and showed that
$(1-q)^n Z_n(a,b,y,q)$ is equal to the $\nth$ moment defined by $b_n = 1+y
+(a+by)q^n$ and $\lambda_n = y(1-q^i)(1-abq^{i-1})$. 
He found the following formula for $Z_n(a,b,y,q)$
\begin{multline}\label{eq:JV-Zn}
Z_n(a,b,y,q)  =  \frac1{(1-q)^n} \sum_{k=0}^n 
\sum_{i=0}^{\flr{\frac{n-k}2}} (-y)^i q^{\binom{i+1}{2}} \qbinom{k+i}{i}\\
\times \sum_{j=0}^{n-k-2i} y^j \left(
\tbinom{n}{j}\tbinom{n}{k+2i+j}-\tbinom{n}{j-1}\tbinom{n}{k+2i+j+1} 
\right) \sum_{r=0}^k \qbinom{k}{r} a^r (yb)^{k-r}.
\end{multline}

 By the Rescaling Lemma~\ref{lem:rescaling}, we get
\[
(1-q)^n Z_n(a,b,y,q) = y^{n/2}
\sum_{m=0}^n (y^{1/2} +y^{-1/2})^{n-m} \binom{n}{m} 
2^m \mu_m(ay^{-1/2} , by^{1/2}, 0,0;q). 
\]
Thus we get in the same manner as \eqref{eq:corteel}

  \begin{multline*}
(1-q)^n Z_n(a,b,y,q) = \sum_{k=0}^n \sum_{j=0}^{n-k} y^j 
  \left( \binom{n}{j}\binom{n}{j+k} - \binom{n}{j-1}\binom{n}{j+k+1} \right)\\
\times
\sum_{u+v+2t=k} a^u b^v y^{v+t}(-1)^{t} q^{\binom{t+1}2} \qbinom{u+v+t}{u,v,t}.
  \end{multline*}

  It is easy to check that the above formula is equivalent to \eqref{eq:JV-Zn}.

\subsection{\JV. and Rubey's formula}

\JV. and Rubey \cite{JVR} considered the $\nth$ moment $R_n(a,b,c,d;q)$ of the
orthogonal polynomials defined by $b_n=d+(a+b)q^n$ and $\lambda_n =
(1-q^n)(c-abq^{n-1})$ and showed that
\begin{equation}
  \label{eq:JVRmu}
R_n(a,b,c,d;q) = \sum_{k=0}^n \frac{k+1}{n+1} 
\sum_{\ell=0}^{n-k} \binom{n+1}{\ell} \binom{\ell}{2\ell-n+k}
c^{n-k-\ell} d^{2\ell-n+k} M_k^*(a,b,c;q),  
\end{equation}
where
\[
\sum_{k\geq0} M_k^*(a,b,c;q) z^k = \frac{1}{1-at} \twophione{cb^{-1}qz, q}{aqz}{q,bz}.  
\]
Using \eqref{eq:JVRmu} \JV. and Rubey \cite{JVR} derive moment formulas for
various orthogonal polynomials. 

By the Rescaling Lemma~\ref{lem:rescaling} we have
\[
R_n(a,b,c,d;q)  = c^{n/2}
\sum_{m=0}^n (dc^{-1/2})^{n-m} \binom{n}{m} 
\mu_m(ac^{-1/2} , bc^{-1/2}, 0,0;q). 
\]
Thus we get 
\begin{equation}
  \label{eq:10}
R_n(a,b,c,d;q) = \sum_{k=0}^n \sum_{m=k}^n c^{\frac{m-k}2} d^{n-m} \tbinom{n}{m} 
\left( \tbinom{m}{\frac{m-k}2}-\tbinom{m}{\frac{m-k}2-1}\right)
\sum_{u+v+2t=k} a^u b^v c^t (-1)^{t} q^{\binom{t+1}2} \qbinom{u+v+t}{u,v,t}.
\end{equation}

It is easy to check using $q$-binomial theorems that \eqref{eq:10}
is equivalent to \eqref{eq:JVRmu}. 

\subsection{The $(t,q)$-Euler numbers}

Kim \cite{Kim_qEuler} defined the $(t,q)$-Euler numbers $E_n(t,q)$ by
\[ \sum_{n\geq0} E_{n}(t,q) z^n = \cfrac{1}{
    1- \cfrac{[1]_q [1]_{t,q} z}{
       1- \cfrac{[2]_q [2]_{t,q} z}{\cdots}}},
\]
and showed that
\[ E_n(t,q) = \frac{1}{(1-q)^{2n}} \sum_{k=0}^n \left( \binom{2n}{n-k} -
  \binom{2n}{n-k-1} \right) t^k q^{k(k+1)} T_k(t^{-1}, q^{-1}),\] 
where $[n]_{t,q} = (1-tq^n)/(1-q)$ and 
\begin{equation}
  \label{eq:Tk}
T_k(t,q)=\sum_{j=0}^k \sum_{i=0}^j (-1)^{j+i}
t^{2i} q^{j^2+i^2+i} \Qbinom{k-j}{i}{q^2}
\left( \Qbinom{k-i}{j-i}{q^2}
+ t \Qbinom{k-i-1}{j-i-1}{q^2}
\right).
\end{equation}

Note that we have $(1-q)^{2n} E_n(t,q) = 2^{2n} \mu_{2n}(\sqrt{-qt},
-\sqrt{-qt},0,0;q)$.  Thus by Corollary~\ref{cor:bb=-a} implies 

\begin{equation}\label{prop:Entq}
E_n(t,q) = \frac{1}{(1-q)^{2n}}
\sum_{k=0}^{n} \left( \binom{2n}{n-k}-\binom{n}{n-k-1}\right)
\sum_{i=0}^k (-1)^k q^{\binom{i+1}2} (q;q^2)_{k-i} (qt)^{k-i} \qbinom{2k-i}{i}. 
\end{equation}

We now show that the above two formulas for $E_n(t,q)$ are equivalent. 
It is sufficient to show that
\begin{equation}
  \label{eq:3}
(-1)^k q^{k^2} T_k(t, q^{-1}) =
\sum_{i=0}^k q^{\binom{i}2} (q;q^2)_{k-i} t^{i} \qbinom{2k-i}{i}. 
\end{equation}

In \eqref{eq:Tk} if we replace $j\mapsto k-j$, interchange the $i$-sum and the
$j$-sum, and replace $j\mapsto j+i$, then the $j$-sum is summable by the
$q$-binomial theorem. Thus the left hand side of \eqref{eq:3} is equal to 
\begin{align*}
&\sum_{i=0}^{\flr{k/2}} t^{2i}q^{2i^2-i}(q^{2i+1};q^2)_{k-2i} \Qbinom{k-i}{i}{q^2}
+ \sum_{i=0}^{\flr{k/2}} 
t^{2i+1}q^{2i^2+i}(q^{2i+3};q^2)_{k-2i-1} \Qbinom{k-i-1}{i}{q^2}\\
&= \sum_{i=0}^{\flr{k/2}} t^{2i} q^{2i^2-i} (q;q^2)_{k-2i} \qbinom{2k-2i}{2i}
+ \sum_{i=0}^{\flr{k/2}} t^{2i+1} q^{2i^2+i} (q;q^2)_{k-2i-1} \qbinom{2k-2i-1}{2i+1},
\end{align*}
which is equal to the right hand side of \eqref{eq:3}. 

Zeng \cite{Zeng_PC} showed that
\begin{equation}
  \label{eq:zeng}
E_n(t,q) = t^{-n} 
\sum_{m=0}^n \sum_{i=0}^m (-1)^{n-i}
\frac{q^{2m-2in+i^2-n-i} [2m]_{t,q}! [2i+1]_{t,q}^{2n}}
{[2i]_q!! [2m-2i]_q!! \prod_{k=0,k\ne i}^m [2k+2i+2]_{t^2,q}},
\end{equation}
where $[2m]_{t,q}! = \prod_{i=1}^{2m} [i]_{t,q}$ and
$[2i]_q!! = \prod_{k=1}^i [2k]_q$. 

It is straightforward to check that Zeng's formula is equivalent to
Theorem~\ref{thm:AWWc=-a,d=-c} when $a=\sqrt{-qt}$ and $c=0$.


\begin{thebibliography}{10}

\bibitem{Andrews}
G.~E. Andrews.
\newblock {\em The theory of partitions}.
\newblock Addison-Wesley Publishing Co., Reading, Mass.-London-Amsterdam, 1976.
\newblock Encyclopedia of Mathematics and its Applications, Vol. 2.

\bibitem{AE}
G.~Andrews and K.~Eriksson.
\newblock Integer Partitions.
\newblock Cambridge University Press, Cambridge, 2004.

\bibitem{AskeyIntegral}
R.~Askey.
\newblock An elementary evaluation of a beta type integral.
\newblock {\em Indian J. Pure Appl. Math.}, 14(7):892--895, 1983.

\bibitem{AskeyWilson}
R.~Askey and J.~Wilson.
\newblock Some basic hypergeometric orthogonal polynomials that generalize
  {J}acobi polynomials.
\newblock {\em Mem. Amer. Math. Soc.}, 54(319):iv+55, 1985.

\bibitem{Bonin1993}
J.~Bonin, L.~Shapiro, and R.~Simion.
\newblock Some {$q$}-analogues of the {S}chr\"oder numbers arising from
  combinatorial statistics on lattice paths.
\newblock {\em J. Statist. Plann. Inference}, 34(1):35--55, 1993.

\bibitem{Burstein2007}
A.~Burstein.
\newblock On some properties of permutation tableaux.
\newblock {\em Ann. Comb.}, 11:355--368, 2007.

\bibitem{Cigler2011}
J.~Cigler and J.~Zeng.
\newblock A curious {$q$}-analogue of {H}ermite polynomials.
\newblock {\em J. Combin. Theory Ser. A}, 118(1):9--26, 2011.

\bibitem{CJVPR}
S.~Corteel, M.~Josuat-Verg\`es, T.~Prellberg, and M.~Rubey.
\newblock Matrix ansatz, lattice paths and rook placements.
\newblock {\em DMTCS proc.}, AK:313--324, 2009.

\bibitem{CSSW}
S.~Corteel, R.~Stanley, D.~Stanton, and L.~Williams.
\newblock Formulae for {A}skey-{W}ilson moments and enumeration of staircase
  tableaux.
\newblock {\em Trans. AMS}, 364:6009-6037, 2012.

\bibitem{Corteel2011}
S.~Corteel and L.~K. Williams.
\newblock Tableaux combinatorics for the asymmetric exclusion process and
  {A}skey-{W}ilson polynomials.
\newblock {\em Duke Math. J.}, 159(3):385--415, 2011.

\bibitem{GR}
G. Gasper and M. Rahman.
\newblock Basic Hypergeometric Series, second edition.
\newblock Cambridge University Press, Cambridge, 2004


\bibitem{MSW}
A.~de~M{\'e}dicis, D.~Stanton, and D.~White.
\newblock The combinatorics of {$q$}-{C}harlier polynomials.
\newblock {\em J. Combin. Theory Ser. A}, 69(1):87--114, 1995.

\bibitem{Gould1961}
H.~W. Gould.
\newblock The {$q$}-{S}tirling numbers of first and second kinds.
\newblock {\em Duke Math. J.}, 28:281--289, 1961.

\bibitem{Ismail2011}
M.~E.~H. Ismail and M.~Rahman.
\newblock Connection relations and expansions.
\newblock {\em Pacific J. Math.}, 252(2):427--446, 2011.

\bibitem{IS2003}
M.~E.~H. Ismail and D.~Stanton.
\newblock {$q$}-{T}aylor theorems, polynomial expansions, and interpolation of
  entire functions.
\newblock {\em J. Approx. Theory}, 123(1):125--146, 2003.

\bibitem{IS_tribasic}
M.~E.~H. Ismail and D.~Stanton.
\newblock Tribasic integrals and identities of {R}ogers-{R}amanujan type.
\newblock {\em Trans. Amer. Math. Soc.}, 355(10):4061--4091 (electronic), 2003.

\bibitem{ISV}
M.~E.~H. Ismail, D.~Stanton, and G.~Viennot.
\newblock The combinatorics of {$q$}-{H}ermite polynomials and the
  {A}skey-{W}ilson integral.
\newblock {\em European J. Combin.}, 8(4):379--392, 1987.

\bibitem{JV2010}
M.~Josuat-Verg{\`e}s.
\newblock A {$q$}-enumeration of alternating permutations.
\newblock {\em European J. Combin.}, 31(7):1892--1906, 2010.

\bibitem{JV_PASEP}
M.~Josuat-Verg\`es.
\newblock Combinatorics of the three-parameter {PASEP} partition function.
\newblock {\em Electron. J. Combin.}, 18:\#P22, 2011.

\bibitem{JV_rook}
M.~Josuat-Verg\`es.
\newblock Rook placements in young diagrams and permutation enumeration.
\newblock {\em Adv. in Appl. Math.}, 47:1--22, 2011.

\bibitem{JVR}
M.~Josuat-Verg\`es and M.~Rubey.
\newblock Crossings, {M}otzkin paths and moments.
\newblock {\em Discrete Math.}, 311:2064--2078, 2011.

\bibitem{Kim1997}
D.~Kim.
\newblock On combinatorics of {A}l-{S}alam {C}arlitz polynomials.
\newblock {\em European J. Combin.}, 18:295--302, 1997.

\bibitem{KSZ}
D.~Kim, D.~Stanton, and J.~Zeng.
\newblock The combinatorics of the {A}l-{S}alam-{C}hihara {$q$}-{C}harlier
  polynomials.
\newblock {\em S\'em. Lothar. Combin.}, 54:Art. B54i, 15 pp. (electronic),
  2005/07.

\bibitem{Kim_qEuler}
J.~S. Kim.
\newblock Enumeration formulas for generalized $q$-{E}uler numbers.
\newblock  {\em Adv. in Appl. Math.}, 49:326--350, 2012

\bibitem{Nadeau}
P.~Nadeau.
\newblock The structure of alternative tableaux.
\newblock {\em J. Combin. Theory Ser. A}, 118(5):1638--1660, 2011.

\bibitem{Pen}
J.-G, Penaud.
\newblock Une preuve bijective d'une formule de Touchard-Riordan.
\newblock {\em Disc. Math.}, 139: 347Ð360. 1995.


\bibitem{Riordan1975}
J.~Riordan.
\newblock The distribution of crossings of chords joining pairs of {$2n$}
  points on a circle.
\newblock {\em Math. Comp.}, 29:215--222, 1975.
\newblock Collection of articles dedicated to Derrick Henry Lehmer on the
  occasion of his seventieth birthday.

\bibitem{Touchard1952}
J.~Touchard.
\newblock Sur un probl\`eme de configurations et sur les fractions continues.
\newblock {\em Canadian J. Math.}, 4:2--25, 1952.

\bibitem{V}
G.~Viennot.
\newblock  Une th\'eorie combinatoire des polynomes orthogonaux generaux.
\newblock Lecture Notes, Universit\`e du Quebec \`a Montreal, 1983.


\bibitem{Vi}
X.~Viennot
\newblock Catalan tableaux and the asymmetric exclusion process.
\newblock FPSAC 2007, Tianjin.

\bibitem{Viennota}
X.~Viennot.
\newblock Alternative tableaux, permutations and partially asymmetric exclusion
  process.
\newblock available at
  \url{http://www.newton.ac.uk/webseminars/pg+ws/2008/csm/csmw04/0423/viennot/}.

\bibitem{Viennot}
X.~Viennot.
\newblock Canopy of binary trees, {C}atalan tableaux and the asymmetric
  exclusion process.
\newblock FPSAC 2007, also available at \url{http://arxiv.org/abs/0905.3081}.

\bibitem{Williams2005}
L.~K. Williams.
\newblock Enumeration of totally positive {G}rassmann cells.
\newblock {\em Advances in Mathematics}, 190:319--342, 2005.

\bibitem{Zeilberger1987}
D.~Zeilberger.
\newblock A {$q$}-{F}oata proof of the {$q$}-{S}aalsch\"utz identity.
\newblock {\em European J. Combin.}, 8(4):461--463, 1987.

\bibitem{Zeng_PC}
J.~Zeng.
\newblock private communication.

\end{thebibliography}

\end{document}